\newlist{pfcases}{itemize}{1}
\setlist[pfcases]{label=--}
\newlist{pfsteps}{itemize}{1}
\setlist[pfsteps]{label={}}
\newcommand{\myindent}{\hspace{.5cm}}
\newcommand{\myspace}{\vspace{.25cm}}
\theoremstyle{plain}
\newtheorem{theorem}{Theorem}[section]
\newtheorem*{theorem*}{Theorem}
\newtheorem{proposition}[theorem]{Proposition}
\newtheorem{lemma}[theorem]{Lemma}
\newtheorem*{lemma*}{Lemma}
\newtheorem{cor}[theorem]{Corollary}
\newtheorem{claim}[theorem]{Claim}
\theoremstyle{definition}
\newtheorem{definition}[theorem]{Definition}
\newtheorem{ass}[theorem]{Assumption}
\newtheorem*{ass*}{Assumption}
\newtheorem{notation}[theorem]{Notation}
\newtheorem*{notation*}{Notation}
\theoremstyle{remark}
\newtheorem{remark}[theorem]{Remark}
\newtheorem{example}[theorem]{Example}
\numberwithin{equation}{section}
\DeclareMathOperator{\pr}{pr}
\DeclareMathOperator{\ppr}{Pr}
\DeclareMathOperator{\coker}{coker}
\DeclareMathOperator{\Hom}{Hom}
\newcommand{\sse}{\subseteq}
\newcommand*{\id}{\textup{id}}
\newcommand{\N}{\ensuremath{\mathbb N}}
\newcommand{\Z}{\ensuremath{\mathbb Z}}
\newcommand{\C}{\ensuremath{\mathbb C}}
\newcommand{\R}{\ensuremath{\mathbb R}}
\newcommand{\emb}{\hookrightarrow}
\def\smallint{{\begingroup\textstyle \int\endgroup}}
\providecommand{\leftsquigarrow}{%
  \mathrel{\mathpalette\reflect@squig\relax}%
}
\newcommand{\reflect@squig}[2]{%
  \reflectbox{$\m@th#1\rightsquigarrow$}%
}
\newcommand{\sint}{\smallint}
\newcommand{\tlt}[1]{\tau_{< #1}}
\newcommand{\tleq}[1]{\tau_{\leq #1}}
\newcommand{\ti}[1]{\tilde{#1}}
\newcommand{\ha}[1]{\hat{#1}}
\newcommand{\Sh}{\mathrm{Sh}}
\newcommand{\si}{\sigma}
\newcommand{\Sn}{\mathbb{S}}
\newcommand{\maps}{\colon}
\newcommand{\tensor}{\otimes}
\renewcommand{\deg}[1]{\left \lvert #1 \right \rvert}
\newcommand{\sulreal}[1]{\bigl \langle #1 \bigr\rangle}
\renewcommand{\S}{\bar{S}}
\newcommand{\rDelta}{\bar{\Delta}}
\newcommand{\rdDelta}[1]{\bar{\Delta}^{(#1)}}
\newcommand{\del}{\partial}
\newcommand{\bs}{\mathbf{s}}                 
\newcommand{\ds}{\mathbf{s}^{-1}}
\newcommand{\xto}[1]{\xrightarrow{#1}}
\newcommand{\CE}{\mathrm{CE}}
 \newcommand{\Q}{\mathbb{Q}}
\newcommand{\kk}{\Bbbk}
\newcommand{\dgla}{\mathsf{dgla}_{\geq 0}}
\newcommand{\Ch}{\mathsf{Ch}}
\newcommand{\Chain}{\mathsf{Ch}_{\geq 0}}
\newcommand{\Linf}{\mathsf{L}_{\infty}\mathsf{Alg}}
\newcommand{\Set}{\mathsf{Set}}
\newcommand{\cD}{\mathsf{D}}
\renewcommand{\C}{\mathsf{C}}
\newcommand{\g}{\mathfrak{g}}
\newcommand{\h}{\mathfrak{h}}
\newcommand{\bb}[1]{\vec{#1}}
\newcommand{\LnA}[1]{\mathsf{Lie}_{#1}\mathsf{Alg}}
\newcommand{\dgcocom}{\mathsf{dgCoCom}_{\geq 0}}
\newcommand{\cocom}{\mathsf{CoCom}_{\geq 0}}
\newcommand{\dgcocomu}{\mathsf{dgCoCom}}
\newcommand{\cocomu}{\mathsf{CoCom}}
\newcommand{\lt}{<}
\newcommand{\vv}{\vee}
\renewcommand{\odot}{\vv}
\newcommand{\antish}{\text{\rotatebox[origin=c]{180}{$!$}}}
\DeclareMathOperator{\End}{\mathrm{End}}
\DeclareMathOperator{\im}{\mathrm{im}}
\DeclareMathOperator{\diag}{\mathrm{diag}}
\DeclareMathOperator{\ft}{\mathrm{fin}}
\DeclareMathOperator{\cdga}{\mathsf{cdga}}
\DeclareMathOperator{\bcdga}{\mathsf{cdga}^{\mathrm{bnd}}_{\geq 0}}
\DeclareMathOperator{\poly}{\mathrm{poly}}
\DeclareMathOperator{\curv}{\mathrm{curv}}
\DeclareMathOperator{\MC}{\mathrm{MC}}
\DeclareMathOperator{\proj}{\mathrm{proj}}
\DeclareMathOperator{\tanch}{\mathrm{tan}_{\geq 0}}
\newcommand{\vphi}{\varphi}
\newcommand{\tha}{\theta}
\newcommand*{\emptycomment}[1]{}
\newcommand{\lnaft}{\LnA{n}^{\ft}}
\newcommand{\el}{\ell}
\newcommand{\sgn}[1]{ (-1)^{\frac{#1(#1-1)}{2}}}
\title{ An explicit model for 
the homotopy theory of finite type Lie $n$-algebras } 
\renewcommand\footnotemark{}
\author{Christopher L.\  Rogers 
\thanks{\hspace{-.625cm} Department of Mathematics \& Statistics, University of Nevada,
  Reno. 1664 N. Virginia Street Reno, NV 89557-0084 USA. |  chrisrogers@unr.edu, chris.rogers.math@gmail.com}
}
\date{}
\begin{document}
\maketitle
\begin{abstract}
Lie $n$-algebras are the $L_\infty$ analogs of chain Lie algebras from rational homotopy theory.
Henriques showed that finite type Lie $n$-algebras can be integrated to produce certain simplicial Banach manifolds, known as Lie $\infty$-groups, via a smooth analog  
of Sullivan's realization functor.
In this paper, we provide an explicit proof that the category of finite type Lie $n$-algebras and (weak) $L_\infty$-morphisms admits the structure of a category of fibrant objects (CFO) for a homotopy theory. Roughly speaking, this CFO structure can be thought of as the transfer of the classical projective CFO structure on non-negatively graded chain complexes via the tangent functor. In particular, the weak equivalences are precisely the $L_\infty$ quasi-isomorphisms. Along the way, we give explicit constructions for pullbacks and factorizations of $L_\infty$-morphisms between finite type  Lie $n$-algebras. We also analyze  Postnikov towers and Maurer--Cartan/deformation functors associated to such  Lie $n$-algebras. The main application of this work is our joint paper \cite{Rogers-Zhu:2018} with C.\ Zhu which characterizes the compatibility of Henriques' integration functor with the homotopy theory of Lie $n$-algebras and that of Lie $\infty$-groups.   
\end{abstract}

\setcounter{tocdepth}{2}
\tableofcontents

\section{Introduction}
The motivation for this paper lies within the various algebraic formalisms developed decades ago for classifying rational homotopy types. In \cite{Quillen:1969}, Quillen established the relationship between the rational homotopy theory of simply-connected spaces, and the homotopy theory of connected chain Lie algebras over $\Q$. By a chain Lie algebra, we mean a chain complex $L$ of vector spaces concentrated in non-negative degrees equipped with a differential graded Lie algebra (dgla) structure. 
A chain Lie algebra $L$ is connected if it is strictly positively graded, i.e. $L_0=0$. Quillen formalized the homotopy theory of connected chain Lie algebras using a model structure 
in which a weak equivalence is defined to be a dgla morphism whose underlying chain map induces an isomorphism in homology, and in which a fibration is defined to be a dgla morphism whose underlying chain map is surjective in all degrees.

From here, we can make a quick leap to the approach developed by Sullivan in \cite{Sullivan:1978}, provided we restrict our attention to ``finite type'' chain Lie algebras, i.e.\ those algebras whose underlying chain complex is finite-dimensional in each degree. 
First, recall that the Chevalley-Eilenberg algebra $\CE(L)$ associated to a chain Lie algebra $L$ is the commutative dg algebra (cdga) obtained by taking the linear dual of the bar construction of $L$.  
If $L$ is finite type and connected, then 
$\CE(L)$ is simply connected, and admits the structure of a Sullivan algebra. Roughly,  
a cdga is a Sullivan algebra if, as a graded commutative algebra, it is freely generated by a 
filtered graded vector space that satisfies certain compatibility conditions with the differential. Sullivan algebras are the cofibrant cdgas in the model structure introduced
by Bousfield and Gugenheim \cite{BG:1976}. In particular, the Sullivan algebra $\CE(L)$ is a  model for the rational homotopy type of its realization: the Kan simplicial set $\sulreal{\CE(L)}:=\hom_{\cdga}(\CE(L), \Omega^{\ast}_{\poly}(\Delta^\bullet))$, where 
$\Omega^{\ast}_{\poly}(\Delta^n)$ is the cdga of polynomial de Rham forms on the geometric $n$-simplex. 

A more direct path from chain Lie algebras to Kan complexes is via deformation theory. Associated to any $\Z$-graded dgla $(L,d, [\cdot,\cdot])$ is its set of Maurer-Cartan elements $\MC(L):=\{ a \in L_{-1} ~\vert ~ da + \frac{1}{2}[a,a]=0\}$. Furthermore, the dgla structure on $L$ induces a natural simplicial dgla structure on the tensor product $L \tensor_{\Q} \Omega^{\ast}_{\poly}(\Delta^\bullet)$. As noted by Getzler \cite{Ezra-infty}, if $L$ is finite type, then there is a natural isomorphism of simplicial sets: 
\[
\sulreal{\CE(L)} \cong \sint L 
\]
where $(\sint L)_n:=\MC \bigl(L \tensor \Omega^{\ast}_{\poly}(\Delta^n) \bigr)$.
Hence,  if $L$ is finite type connected, then $L$ is a Lie model for the rational homotopy type of the space $\sint L$. Moreover, the ``simplicial Maurer-Cartan functor'' $\int$ is compatible with the respective homotopy theories. Indeed, by combining Quillen's work \cite{Quillen:1969} with the results of Bousfield and Gugenheim \cite{BG:1976}, it follows that $\int$ preserves both weak equivalences and fibrations. In particular, $\int L$ is a Kan complex for every finite type connected chain Lie algebra $L$.

This leads to an obvious question: What is the analog of this story for arbitrary finite type chain Lie algebras  over a field of characteristic zero? That is, suppose there are no constraints imposed on degree zero elements and no assumptions involving nilpotency or completeness. To begin with, Quillen's model for the homotopy theory of connected rational chain Lie algebras extends in a straightforward way to the more general case over any field of characteristic zero. It follows from the work of Getzler and Jones \cite[Thm.\ 4.4]{Getzler-Jones:1994} that the category of chain Lie algebras over such a field admits a model structure induced by the projective model structure on non-negatively graded chain complexes. Hence, a weak equivalence is, as before, a dgla morphism whose underlying chain map is a quasi-isomorphism, and a fibration is defined to be a dgla morphism whose underlying chain map is surjective in positive degrees. This is a natural generalization of Quillen's model structure for the rational connected case.

However, the spatial realization of finite type chain Lie algebras with no constraints on connectivity is a more subtle endeavor. Indeed, all finite dimensional non-nilpotent Lie algebras are examples of such chain Lie algebras. Consequently, as demonstrated by Sullivan \cite[``Theorem 8.1'']{Sullivan:1978}, the realization of chain Lie algebras over $\R$ necessarily involves the diffeo-geometric integration of Lie algebras to Lie groups (i.e.\ Lie's Third Theorem). 

The existence of a smooth realization functor for such chain Lie algebras is a special case of 
the more general problem addressed by Henriques in his work \cite{Henriques:2008} on the integration of Lie $n$-algebras. Lie $n$-algebras are $L_\infty$-algebras (or ``strong homotopy Lie algebras'') whose underlying chain complexes are non-negatively graded. Thus they include chain Lie algebras as a special case. However, the ``correct'' notion of morphism between Lie $n$-algebras is significantly weaker than just a linear map which preserves the $L_\infty$-structure on the nose. This implies that the category of Lie $n$-algebras and weak $L_\infty$-morphisms is not a category of algebras over the $L_\infty$ operad. Hence, a model for the homotopy theory of Lie $n$-algebras does not follow from the aforementioned result of Getzler and Jones. The main result (Thm.\  \ref{thm:LnA_CFO}) of this paper resolves this issue by explicitly providing such a model. 

Henriques' integration procedure for finite type Lie $n$-algebras involves replacing the polynomial de Rham forms in Sullivan's realization functor with the dg Banach algebra of $C^r$-differential forms. The output of this procedure is a group-like simplicial Banach manifold, 
or ``Lie $\infty$-group'', which satisfies a diffeo-geometric analog of the horn filling condition for Kan simplicial sets. Simplicial manifolds of this kind have been used as
geometric models for the higher stages of the Whitehead tower of the orthogonal group. The most famous example of such a model is called the ``String Lie 2--group'', which Henriques showed can be obtained by integrating its infinitesimal analog, the ``string Lie 2-algebra''.  

The results of this paper, when combined with our joint work with Zhu in the companion paper \cite{Rogers-Zhu:2018}, address the compatibility of Henriques' integration functor with the homotopy theories of  Lie $n$-algebras and Lie $\infty$-groups. This can be understood as the smooth analog of the aforementioned results of Quillen and Bousfield-Gugenheim which characterize the homotopical properties of the realization functor for connected Lie models for rational homotopy types.

\subsection*{Overview and main results}
After reviewing standard facts concerning $L_\infty$-algebras in the sections leading up to Sec.\ \ref{sec:lna}, we consider in Def.\ \ref{def:LnA_morphs} two classes of morphisms in the category $\LnA{n}$ of Lie $n$-algebras.
We say a $L_\infty$-morphism $(f_1,f_2,\ldots) \maps (L,\el_1,\el_2, \el_3 , \ldots) \to (L',\el'_1,\el'_2, \el'_3 , \ldots)$ is a \textbf{weak equivalence} iff the chain map $f_1 \maps (L,\el_1) \to (L',\el'_1)$ is a quasi-isomorphism, and we say it is a \textbf{fibration} iff the chain map $f_1$ is a surjection in all positive degrees. Thus weak equivalences coincide with $L_\infty$ quasi-isomorphisms. Although every weak equivalence between Lie $n$-algebra induces a quasi-isomorphism between their associated Chevalley-Eilenberg (co)algebras, the converse is not true, in contrast with the simply connected case mentioned in the above introduction. (See Remark \ref{rmk:acyclic_fibs} for an explicit counterexample.)

In Sec.\ \ref{sec:LnA_fact}, we prove several useful technical results concerning 
morphisms in $\LnA{n}$. We show in Prop.\ \ref{prop:strict_factor} that every strict $L_\infty$-morphism (in the sense of Sec. \ref{sec:strict}) can be factored into a fibration followed by a weak equivalence. In particular, the diagonal map $L \to L \oplus L$ in $\LnA{n}$ admits such a factorization. Therefore, it follows from our main theorem (see below) and Brown's Factorization Lemma (see Lemma \ref{lem:fact}) that every weak $L_\infty$-morphism in $\LnA{n}$ admits such a factorization.     

Next, in Lemma \ref{lem:strict_fib}, we show that every fibration can be factored into an isomorphism followed by a strict fibration. The proof is a simple modification of a result of Vallette \cite{Vallette:2014} concerning the factorization of ``$\infty$-epimorphisms''. (See, for example, Def.\ \ref{def:quasi-iso}). This ``strictification of fibrations'' is a very useful tool which we use repeatedly throughout this paper and in the companion paper \cite{Rogers-Zhu:2018}. 

In Prop.\ \ref{prop:strict_pullback} and Cor.\ \ref{cor:fib_pback}, we explicitly construct pullbacks of fibrations and acyclic fibrations along arbitrary morphisms in $\LnA{n}$.
Moreover, the pullback of a (acyclic) fibration is again a (acyclic) fibration. The proof of these facts involves
a clever use of certain coalgebra endomorphisms, which we learned from studying Vallette's proof of 
his Thm.\ 4.1 in \cite{Vallette:2014}.

Since our main application is integration, in Sec.\ \ref{sec:LnA_CFO}, we restrict our attention to the full subcategory $\lnaft$ of finite type Lie $n$-algebras. The category $\lnaft$ does not admit a model structure, since it does not have all limits and colimits. So instead, we work within Brown's framework \cite{Brown:1973} of a category of fibrant objects, or ``CFO'', for a homotopy theory (Def.\ \ref{def:cfo}). The main result (Thm.\  \ref{thm:LnA_CFO}) of the paper is:

\begin{theorem*}
Let $n \in \N \cup \{\infty\}$.  The category $\LnA{n}^{\ft}$ of finite type Lie $n$-algebras 
over a field of characteristic zero and weak $L_\infty$-morphisms has the structure of a category of fibrant objects, in which 
a morphism $$(f_1,f_2,\ldots) \maps (L,\el_1,\el_2, \el_3 , \ldots) \to (L',\el'_1,\el'_2, \el'_3 , \ldots)$$ is:
\begin{itemize}
\item[-]a weak equivalence iff the chain map $f_1 \maps (L,\el_1) \to (L',\el'_1)$ is a quasi-isomorphism of chain complexes, and
\item[-] a fibration iff the chain map $f_1 \maps (L,\el_1) \to (L',\el'_1)$ is a surjection in all positive degrees (i.e.\ in all degrees $\geq 1$).
\end{itemize}
\end{theorem*}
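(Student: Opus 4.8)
The plan is to verify the axioms of a category of fibrant objects (Def.~\ref{def:cfo}) one by one, assembling the technical results of Sec.~\ref{sec:LnA_fact}. The organizing observation is that the assignment carrying a finite type Lie $n$-algebra $(L,\el_1,\el_2,\ldots)$ to its underlying tangent complex $(L,\el_1)$, and a weak $L_\infty$-morphism $(f_1,f_2,\ldots)$ to its linear part $f_1$, defines a functor $U\maps \lnaft \to \Chain$; this uses the standard fact that the linear part of a composite of $L_\infty$-morphisms is the composite of the linear parts, $(g\circ f)_1 = g_1\circ f_1$. By definition, the weak equivalences and fibrations of $\lnaft$ are exactly the preimages under $U$ of the quasi-isomorphisms and the degreewise-positive surjections of the projective CFO structure on $\Chain$. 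Consequently, every axiom that is detected on the linear part is inherited from $\Chain$ by functoriality of $U$, which makes precise the ``transfer via the tangent functor'' described in the abstract.

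First I would dispatch the formal axioms. The trivial Lie $n$-algebra $0$ is a zero object, hence terminal, and $U$ sends the unique map $L\to 0$ to a map that is surjective in every positive degree, so every object is fibrant. Any isomorphism in $\lnaft$ has invertible linear part, which is simultaneously a quasi-isomorphism and a degreewise surjection, so isomorphisms are acyclic fibrations. The two-out-of-three property for weak equivalences and the closure of fibrations under composition then transfer immediately from $\Chain$ via the identity $(g\circ f)_1 = g_1\circ f_1$. Finite products exist in $\lnaft$ because the direct sum $L\oplus L'$ of finite type Lie $n$-algebras is again finite type and computes the categorical product (the tensor product of the associated cofree conilpotent coalgebras).

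The two substantive axioms are pullback-stability and the existence of path objects, and here I would appeal to the explicit constructions of Sec.~\ref{sec:LnA_fact} rather than to abstract completeness, which $\lnaft$ does not enjoy. For pullback-stability, given a fibration $p$ and an arbitrary weak $L_\infty$-morphism $g$ with common target, Prop.~\ref{prop:strict_pullback} together with Cor.~\ref{cor:fib_pback} produces an explicit pullback whose projection onto the base is again a fibration, and an acyclic fibration whenever $p$ is; the only additional point is that this explicit model is finite type, which is clear since its degreewise dimensions are bounded by those of the given data. For path objects, I would apply the factorization of Prop.~\ref{prop:strict_factor} to the diagonal $L\to L\oplus L$ --- a strict $L_\infty$-morphism --- to obtain the path-object factorization through a finite type object required by Def.~\ref{def:cfo}. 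Brown's Factorization Lemma (Lemma~\ref{lem:fact}) then promotes these path objects to factorizations of arbitrary weak $L_\infty$-morphisms, completing the verification.

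The step I expect to be the main obstacle is keeping the pullback construction inside $\lnaft$ while verifying the fibration and acyclic-fibration conditions on the nose. Because $\lnaft$ is not closed under general limits, one cannot invoke an abstract pullback; everything rests on the coalgebra-endomorphism construction behind Prop.~\ref{prop:strict_pullback}, and the strictification of fibrations (Lemma~\ref{lem:strict_fib}) is precisely what makes this tractable, reducing the pullback of a general fibration to that of a strict one, after which finite-typeness and the surjectivity of the linear part can be read off directly.
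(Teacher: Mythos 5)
Your proposal is correct and follows essentially the same route as the paper's proof: verify the CFO axioms one by one, with products from Prop.\ \ref{prop:finprod}, the formal axioms read off from the tangent/linear part, pullback-stability from Prop.\ \ref{prop:strict_pullback} and Cor.\ \ref{cor:fib_pback} (via the strictification Lemma \ref{lem:strict_fib}), and path objects by factoring the strict diagonal via Prop.\ \ref{prop:strict_factor}. Your finite-typeness check for pullbacks (bounding degreewise dimensions) is a harmless variant of the paper's observation that the underlying complex is the pullback in $\Chain$ of the tangent diagram.
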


Hence, the homotopy theory that we consider for Lie $n$-algebras is inherited from the projective model structure for non-negatively graded chain complexes (Sec.\ \ref{sec:prelim}), via the  
tangent functor which assigns to a $L_\infty$-morphism $(f_1,f_2,\ldots)$ as above, the chain map $f_1  \maps (L,\el_1) \to (L',\el'_1)$. In particular, our results imply that the tangent functor is an exact functor  between categories of fibrant objects (Cor.\ \ref{cor:tanexact}). We also note that this CFO structure is compatible with the one induced on the category of chain Lie algebras
by the aforementioned Getzler-Jones/Quillen model structure. (Recall that chain Lie algebras form a non-full subcategory of $\LnA{\infty}$.) 

Also in Sec.\ \ref{sec:LnA_CFO}, we compare the category of fibrant objects structure on finite type Lie $n$-algebras with  Vallette's CFO structure (Thm.\ \ref{thm:vallette_cfo}) on $\Z$-graded $L_\infty$-algebras.  

In Sec.\ \ref{sec:MCfunc}, we analyze Maurer-Cartan (MC) sets of certain $\Z$-graded $L_\infty$-algebras which are constructed by tensoring Lie $n$-algebras with bounded commutative dg algebras. This is a familiar procedure used in deformation theory for constructing deformation functors out of pronilpotent $L_\infty$-algebras. Maurer-Cartan sets are also used to define Henriques' integration functor.  We prove that this construction sends pullback diagrams of fibrations in $\LnA{n}$ to pullback diagrams of MC sets. In Cor.\ \ref{cor:MC-pullbacks}, we show that an analogous statement holds in the smooth category when the MC sets are equipped with a Banach manifold structure. The proofs crucially depend on the explicit description of pullbacks given in Sec.\ \ref{sec:pullbacks}.

Finally, in Sec.\ \ref{sec:postnikov}, we analyze a very useful Postnikov tower construction for Lie $n$-algebras which was introduced by Henriques in \cite{Henriques:2008}.
They play a key role in his proof that Lie $n$-algebras integrate to fibrant simplicial manifolds. We also introduce an important class of fibrations in $\LnA{n}$ called ``quasi-split fibrations'' (Def.\ \ref{def:split_fib}). Such fibrations naturally arise in applications, e.g.\ string extensions. 
We show that a morphism of Postnikov towers induced by a quasi-split fibration admits a convenient functorial decomposition (Prop.\ \ref{prop:tower_decomp1} and Prop.\ \ref{prop:tower_decomp2}).
We use this result and the aforementioned results concerning MC sets in \cite{Rogers-Zhu:2018} to show that Henriques' integration functor is an exact functor with respect to the class of quasi-fibrations. (See Thm.\ 9.16 in \cite{Rogers-Zhu:2018}.)

In this paper, we work with $L_\infty$-algebras within the context of dg cocomutative coalgebras, rather than commutative dg algebras, even though we are ultimately interested in finite type objects. This is because our main result Thm.\  \ref{thm:LnA_CFO}, as well as 
many of the auxiliary results, also hold for infinite dimensional Lie $n$-algebras. (See Remark 
\ref{rmk:inf_dim}.) Furthermore, many of the technical tools we develop in order to prove our main results are the ``non-negatively graded'' variations of Vallette's work \cite{Vallette:2014} on the homotopy theory of $\Z$-graded dg $\mathcal{P}^{\antish}$ coalgebras (where $\mathcal{P}^{\antish}$ is the Koszul dual cooperad of an operad $\mathcal{P}$).  For the convenience of the reader, we quickly recall in the next section various facts concerning cocommutative coalgebras, as well as establish the notation and conventions that we use throughout the paper.

\vspace{-.25cm}
\subsection*{Acknowledgments}
The author thanks: Aydin Ozbek and Chenchang Zhu for very helpful discussions during the preparation of this manuscript, Ezra Getzler for several informative conversations on the homotopy theory of $L_\infty$-algebras, and the anonymous referees for their valuable suggestions for improvements. 
The author is also indebted to Bruno Vallette for writing the inspiring preprint \cite{Vallette:2014}. This work was supported by a grant from the Simons Foundation/SFARI (585631,CR).
\vspace{-.25cm}

\section{Preliminaries and notation} \label{sec:prelim}
\subsection{Graded linear algebra} \label{sec:lin_alg}
Throughout, $\kk$ denotes a field of characteristic zero.
For a $\Z$-graded vector space $V$ we denote 
by $\bs V$ (resp. by $\bs^{-1} V$) the suspension (resp. the 
desuspension) of $V$\,. In other words, 
\[
\bs V_{i}:=V[-1]_{i}=V_{i-1} \qquad \bs^{-1} V_{i}:=V[1]_{i}=V_{i+1}
\]
We denote by $\deg{x}$ the degree of a homogeneous element $x \in V$. Similarly, $\deg{f}$ denotes the degree of a linear map $f \maps V \to V'$ between graded vector spaces $V$  and $V'$.

Let $x_{1}, \hdots,  x_{n}$ be
elements of $V$ and $\sigma \in \Sn_n$ a permutation. The notation
$\epsilon(\sigma)=\epsilon(\sigma ;
x_{1},\hdots,x_{n})$ is reserved for the Koszul sign, which is defined by the equality
\[
x_{1} \vv  \cdots \vv  x_{n} := \epsilon(\sigma) x_{\sigma(1)} \vv \cdots \vv  x_{\sigma(n)} \in S(V)
\]
which holds in the free graded commutative algebra $S(V)$ generated by
$V$. Note that $\epsilon(\sigma)$ does not include the sign $(-1)^{\sigma}$ of the permutation $\sigma$. 

The notation 
$\sigma \cdot (x_1 \tensor x_2 \tensor \cdots \tensor x_n)$ is reserved for the left action of $\Sn_n$ on $V^{\tensor n}$, i.e.
\begin{equation} \label{eq:Sn_action}
\sigma \cdot x_1 \tensor x_2 \tensor \cdots \tensor x_n:= \epsilon(\sigma)
x_{\si^{-1}(1)} \tensor x_{\si^{-1}(2)} \tensor \cdots \tensor x_{\si^{-1}(n)}.
\end{equation}

We denote by $\Sh_{p_1, \dots, p_k} \sse \Sn_n$ the subset of $(p_1, \dots, p_k)$-shuffles 
in $\Sn_n$, i.e.  $\Sh_{p_1, \dots, p_k}$ consists of 
elements $\si \in \S_n$, $n= p_1 +p_2 + \dots + p_k$ such that 
$$
\begin{array}{c}
\si(1) <  \si(2) < \dots < \si(p_1),  \\[0.3cm]
\si(p_1+1) <  \si(p_1+2) < \dots < \si(p_1+p_2), \\[0.3cm]
\dots   \\[0.3cm]
\si(n-p_k+1) <  \si(n-p_k+2) < \dots < \si(n)\,.
\end{array}
$$

We use \underline{homological} conventions for all differential graded (dg) structures except for the cochain algebras that appear in Sec.\ \ref{sec:MCfunc}.

\subsection{Notation} \label{sec:cats}
\subsubsection{The model category $\Chain^{\proj}$} \label{sec:chproj}
We refer the reader to the introductory article \cite{Dwyer-Spal:1995} for basic facts concerning model categories. Throughout our paper, $\Chain$ denotes the category of chain complexes over $\kk$ concentrated in non-negative degrees. The notation $\Chain^{\proj}$ is reserved for the category $\Chain$ equipped with 
the projective model structure \cite[Thm.\ 7.2]{Dwyer-Spal:1995}. Weak equivalences are the quasi-isomorphisms, and fibrations are those chain maps which are surjective in all {positive} degrees. Since we work over a field, the cofibrations are those chain maps which are injective in all degrees. 

Since all objects in $\Chain^{\proj}$ are (bi)fibrant, we will also consider $\Chain^{\proj}$ as a category of fibrant objects (Def.\ \ref{def:cfo}) whenever it is convenient to do so.   

\subsubsection{Notation for categories}
We will consider several closely related categories. We list them here as a convenient reference for the reader as they traverse through the paper.

\begin{itemize}

\item[-] $\Ch$ denotes the category of $\Z$-graded (i.e.\ unbounded) chain complexes over $\kk$. 

\item[-] $\cocomu$ (resp.\ $\dgcocomu$) denotes the category of $\Z$-graded (resp.\ dg) conilpotent 
coaugmented cocommutative coalgebras (Sec.\ \ref{sec:cocom}).  

\item[-] $\Linf$ is the category whose objects are $\Z$-graded $L_\infty$-algebras and whose morphisms are weak $L_\infty$-morphisms (Sec.\ \ref{sec:L_inf}). We will tacitly identify $\Linf$ as the full subcategory of $\dgcocomu$ consisting of those dg coalgebras whose underlying graded coalgebras are cofree (Sec.\ \ref{sec:cofree}).  

\item[-] $\cocom$ (resp.\ $\dgcocom$) denotes the full subcategory of $\cocomu$ (resp.\ $\dgcocomu$)
consisting of those graded (resp.\ dg) coalgebras whose underlying graded vector spaces are concentrated in non-negative degrees.  

\item[-] Fix $n \in \N \cup \{\infty\}$. We denote by $\LnA{n}$ the category of Lie $n$-algebras: the full subcategory of $\Linf$ consisting of those $L_\infty$-algebras whose underlying chain complex is concentrated in degrees $0, 1, \ldots, n-1$. (Sec.\  \ref{sec:lna}). Hence, the morphisms in $\LnA{n}$ are always taken to be \underline{weak} $L_\infty$-morphisms. Again we will usually identify $\LnA{n}$ as a full subcategory of $\dgcocom$. 

The notation $\lnaft$ is reserved for the full subcategory of $\LnA{n}$ consisting of finite type Lie $n$-algebra (Sec.\ \ref{sec:lna}.)

\item[-] We denote by $\bcdga$ the category of bounded cochain algebras. (Sec.\ \ref{sec:MCfunc}.)
Its objects are \underline{cohomologically} graded unital commutative dg $\kk$-algebras 
whose underlying graded vector spaces are concentrated in non-negative degrees and
bounded from above.  The morphisms in $\bcdga$ are unit preserving cdga morphisms.

\end{itemize}

\subsection{Conilpotent cocommutative coalgebras} \label{sec:cocom}
The following facts and notational conventions concerning dg coalgebras are standard, and the reader already familiar with treatments of $L_\infty$-algebras as dg coalgebras, e.g.\ \cite{Lada-Markl:1995} or \cite[Sec.\ 10.1.6]{LoVa} can likely skip this section.

For a basic introduction to dg coalgebras and their morphisms, we suggest Sections 3d and 22a of \cite{FHT}, Appendix B of \cite{Quillen:1969},  and Section 2 of \cite{Hinich:2001}.
We recall that a  graded counital cocommutative coalgebra $(C,\Delta,\epsilon)$  
is \textbf{coaugmented} iff it is  equipped with a distinguished degree zero element $\mathbf{1} \in C_0$ satisfying $\Delta \mathbf{1} = \mathbf{1} \tensor \mathbf{1}$, and $\epsilon(\mathbf{1})=1$. For such a coalgebra, we have a decomposition of vector spaces
\[
C \cong \kk \cdot \mathbf{1} \oplus \bar{C}, \quad \bar{C} := \ker \epsilon.
\] 
Let $\rDelta \maps \bar{C} \to \bar{C} \tensor \bar{C}$ denote the \textbf{reduced comultiplication}
defined as $\rDelta(c):= \rDelta c = \Delta c - c \tensor \mathbf{1} - \mathbf{1} \tensor c$.
We call the non-counital cocomutative coalgebra $(\bar{C}, \rDelta)$ the associated \textbf{reduced coalgebra}.
The reduced diagonal $\rdDelta{n}$ is recursively defined by the formulas:
\[
\rdDelta{0}:=\id, \qquad 
\rdDelta{1}:=\rDelta, \qquad 
\rdDelta{n}:= \bigl( \rDelta \tensor \id^{\tensor(n-1)} \bigr) \circ
\rdDelta{n-1} \maps \bar{C} \to \bar{C}^{\tensor(n+1)}.
\]

A coaugmented counital cocommutative coalgebra $(C,\Delta,\epsilon, \mathbf{1})$ is \textbf{conilpotent} iff $\bar{C} = \bigcup_{n} \ker \rdDelta{n}$.    
We define $\cocomu$ to be the category whose objects are $\Z$-graded conilpotent coaugmented counital cocommutative coalgebras. Similarly, we define $\dgcocomu$ to be the category whose objects are coalgebras $(C,\Delta,\epsilon, \mathbf{1})$ in $\cocomu$ equipped with a degree $-1$ codifferential $\delta$ satisfying $\delta(\mathbf{1})=0$.

The full subcategories $\cocom \sse \cocomu$ and $\dgcocom \sse \dgcocomu$ are defined analogously.

\subsubsection*{Reduced non-counital dg coalgebras $(\bar{C},\rDelta,\delta)$ and their morphisms}
The codifferential $\delta$ of any conilpotent dg coalgebra $(C,\Delta,\epsilon, \mathbf{1},\delta) \in \dgcocomu$ is uniquely determined by its restriction to the corresponding reduced coalgebra $(\bar{C},\rDelta)$. We will use the same notation for $\delta$ and its restriction to $\bar{C}$. Similarly, since we are over a field, morphisms in $\dgcocomu$ are uniquely determined by their restriction to the associated reduced dg coalgebras 
\cite[Sec.\ 2.1]{Hinich:2001}. From now on, when dealing with the categories $\dgcocom$, $\dgcocomu$, etc., we will tacitly work with the associated reduced dg coalgebras and their morphisms, and make no mention of counits or coaugmentations.


\subsection{Cofree conilpotent coalgebras and their morphisms} \label{sec:cofree}
Let $V$ be a graded vector space. The symmetric algebra generated by $V$
\begin{equation*}
S(V)= \kk \oplus \S(V), \qquad\S(V):=V \oplus S^{2}(V) \oplus S^{3}(V) \oplus \hdots
\end{equation*}
is naturally a graded conilpotent cocommutative coalgebra with comultiplication $\Delta$ defined
as the unique morphism of algebras such that $\Delta(v) = v \tensor 1 + 1
\tensor v$ for all $v\in V$. The comultiplication for the corresponding reduced coalgebra $(\S(V),\rDelta)$ is explicitly:
\begin{equation*}
\begin{split}
 \rDelta(v_{1}, v_{2}, \ldots, v_{m})=&
  \sum_{1 \leq p \leq m-1} \sum_{\sigma \in \Sh(p,m-p)} \epsilon(\sigma)
  \left(v_{\sigma(1)} \vv v_{\sigma(2)} \vv \cdots \vv v_{\sigma(p)} \right)\\
  & \tensor \left ( v_{\sigma(p+1)} \vv v_{\sigma(p+2)} \vv \cdots \vv v_{\sigma(m)}
  \right).
\end{split}
\end{equation*}

Let $V$ and $V'$ be graded vector spaces. Let $\Phi \maps \S(V) \to \S(V')$ be a linear map.
For $p,m \geq 1$ the  notation $\Phi^p_{n}$ is reserved for the restriction-projections 
\begin{equation*}
\Phi^p_{m} \maps \S^{m}(V) \to \S^{p}(V') \qquad  \Phi^p_{m}:= \pr_{\S^{p}(V')} \circ \Phi \vert_{\S^{m}(V)} 
\end{equation*}
$\Phi$ is obviously completely determined by its restriction-projection maps $\{\Phi^r_{m}\}$. 
Furthermore, we denote by $\Phi^1 \maps \S(V) \to V'$ the linear map
\[
\Phi^{1} := \Phi^{1}_{1} + \Phi^{1}_{2} + \cdots
\]

We recall that $(\S(V),\rDelta)$ is cofree over $V$ in the category $\cocomu$ (cf.\ Lemma 22.1 in \cite{FHT}). In particular, a degree zero linear map $F^{1} \maps \S(V) \to V'$ uniquely determines  
a coalgebra morphism $F \maps \S(V) \to \S(V')$ via the following formula
\begin{equation} \label{eq:morph_formula1}
\begin{split}
F(v_{1}, \ldots, v_{m}) &=  F^{1}_{m}(v_{1}, \ldots,
 v_{m}) + \sum_{p=1}^{m-1} ~ 
\sum^{k_{1} + k_{2} +
  \cdots + k_{p+1}=m}_{k_{1},k_{2},\hdots,k_{p+1} \geq 1} \sum_{\sigma
  \in \Sh(k_{1},k_{2},\hdots,k_{p+1})} \frac{\epsilon(\sigma)}{(p+1)!}\\
&\quad F^{1}_{k_{1}}(v_{\sigma(1)},
\ldots, v_{\sigma(k_{1})}) 
\odot F^{1}_{k_{2}}(v_{\sigma(k_{1} + 1)}, \ldots,
v_{\sigma(k_{1}+k_{2})}) \odot \cdots \\
& \quad \odot F^{1}_{k_{p+1}}(v_{\sigma(m-k_{p+1} + 1)}, \ldots, v_{\sigma(m)}).
\end{split}
\end{equation}
This gives explicit formulas for the restriction-projections $F^{p}_{m}$:
\begin{equation} \label{eq:morph_formula2}
\begin{split}
F^{p}_{m}(v_{1}, \ldots, v_{m}) &=
\sum^{k_{1} + k_{2} +
  \cdots + k_{p}=m}_{k_{1},k_{2},\hdots,k_{p} \geq 1} \sum_{\sigma
  \in \Sh(k_{1},k_{2},\hdots,k_{p})} \frac{\epsilon(\sigma)}{p!}
F^{1}_{k_{1}}(v_{\sigma(1)}, \ldots, v_{\sigma(k_{1})}) \\
& \quad \odot F^{1}_{k_{2}}(v_{\sigma(k_{1} + 1)}, \ldots, v_{\sigma(k_{1}+k_{2})}) \odot \cdots 
\odot F^{1}_{k_{p}}(v_{\sigma(m-k_{p} + 1)}, \ldots, v_{\sigma(m)}).
\end{split}
\end{equation}
In particular, we have
\begin{equation} \label{eq:morph_formula3}
F^{m}_{m}(v_{1}, \ldots, v_{m}) = F^{1}_{1}(v_{1}) \odot F^{1}_{1}(v_{2}) \odot \cdots \odot F^{1}_{1}(v_m), \quad F^{p}_{m}(v_{1}, \ldots, v_{m})=0 \quad \forall p >m.
\end{equation}
Hence, a coalgebra morphism between cofree conilpotent coalgebras  $F \maps \S(V) \to \S(V')$ is uniquely determined by its \textbf{structure maps} $F^1_k \maps \S^k(V) \to V'$.

\subsubsection*{Composition} \label{sec:comp}
Let $F \maps \S(V) \to \S(V')$  and $G \maps \S(V') \to \S(V'')$ be coalgebra morphisms. It follows from Eqs.\ \ref{eq:morph_formula1}--\ref{eq:morph_formula3} that the composition $GF \maps \S(V) \to \S(V'')$ is the unique coalgebra morphism whose structure maps $(GF)^1_m \maps \S^m(V) \to V''$ are
\begin{equation}\label{eq:comp}
\begin{split}
(GF)^1_m(v_1,\ldots, v_m) & = \sum^m_{p=1} G^1_p F^p_m(v_1,\ldots, v_m)
\end{split}
\end{equation} 

\subsubsection*{Coderivations} \label{sec:coder}
Analogously, we recall that a degree $-1$ linear map $\delta^{1} \maps \S(V) \to V$ uniquely determines a degree $-1$ coderivation $\delta \maps \S(V) \to \S(V)$ via the following formula
\begin{multline} \label{eq:coder_formula1}
\delta_{m}(v_{1}, \ldots, v_{m}) = 
\delta^1_{m}(v_{1},\ldots,v_{m}) + \\
\sum^{m-1}_{i=1} \sum_{\sigma \in \Sh(i,m-i)}
\epsilon(\sigma) \delta^1_{i}(v_{\sigma(1)}, \ldots, v_{\sigma(i)}) \odot v_{\sigma(i+1)} \odot \cdots \odot v_{\sigma(m)}.
\end{multline}
(See, for example, Lemma 2.4 in \cite{Lada-Markl:1995}).
This gives explicit formulas for the restriction-projections:
\begin{equation} \label{eq:coder_formula2}
\begin{split}
\delta^{p}_{m}(v_{1}, \ldots, v_{m}) = \hspace{-.5cm}\sum_{\sigma \in \Sh(m-p+1,p-1)} \hspace{-.5cm}
\epsilon(\sigma) \delta^1_{m-p+1}(v_{\sigma(1)}, \ldots, v_{\sigma(i)}) \odot v_{\sigma(i+1)} \odot \cdots \odot v_{\sigma(m)}.
\end{split}
\end{equation}
In particular, we have
\begin{equation} \label{eq:coder_formula3}
\delta^{m}_{m}(v_{1}, \ldots, v_{m}) = \bigl(\delta^{1}_{1}\bigr)^{\tensor}(v_{1}  \odot v_{2} \cdots \odot v_m), \quad \delta^{p}_{m}(v_{1}, \ldots, v_{m})=0 \quad \forall p >m,
\end{equation}
where $\bigl(\delta^{1}_{1}\bigr)^{\tensor}$ denotes the usual derivation on $S(V)$ induced by the linear map  $\delta^1_1 \maps V \to V$. Hence, a coderivation on a  cofree conilpotent coalgebra is uniquely determined by its \textbf{structure maps} $\delta^1_m \maps \S^m(V) \to V$. 

It follows from Eqs.\
\ref{eq:coder_formula1}--\ref{eq:coder_formula3} that a  degree $-1$ coderivation $\delta$ on $\S(V)$ is a \textbf{codifferential}, i.e., $\delta^2=0$, if and only if
\begin{equation}\label{eq:codiff}
\begin{split}
\sum^m_{k=1} \delta^1_k \delta^k_m(v_1,\ldots, v_m) =0 \quad \forall m \geq 1.
\end{split}
\end{equation} 
Analogously, a coalgebra morphism of the form $F \maps \S(V) \to \S(V')$ lifts to a dg coalgebra morphism $F \maps (\S(V),\delta) \to (\S(V'), \delta')$ if and only if
\begin{equation}\label{eq:dgmap}
\begin{split}
\sum^m_{k=1} \delta'^1_k F^k_m(v_1,\ldots, v_m) =    
\sum^m_{k=1} F^1_k\delta^k_m (v_1,\ldots, v_m) \quad \forall m \geq 1.
\end{split}
\end{equation}

\section{Lie $n$-algebras} \label{sec:Lie_n-Alg}

\subsection{$L_\infty$-algebras and $L_\infty$-morphisms} \label{sec:L_inf}
An \textbf{$L_\infty$-algebra} $(L, \el)$
is a $\Z$-graded vector space $L$ equipped with 
a collection $\el = \{\el_1, \el_2, \el_3, \ldots \}$
of graded skew-symmetric linear map, or ``brackets'',
\begin{equation*} 
\el_k \maps \Lambda^k L \to L, \quad 1 \leq k < \infty 
\end{equation*}
with  $\deg{\el_k} = k-2$, satisfying an infinite sequence of Jacobi-like identities of the form:
\begin{align} \label{eq:Jacobi}
   \sum_{\substack{i+j = m+1, \\ \sigma \in \Sh(i,m-i)}}
  (-1)^{\sigma}\epsilon(\sigma)(-1)^{i(j-1)} l_{j}
   (l_{i}(x_{\sigma(1)}, \dots, x_{\sigma(i)}), x_{\sigma(i+1)},
   \ldots, x_{\sigma(m)})=0.
\end{align}
for all $m \geq 1$ \cite[Def.\ 2.1]{Lada-Markl:1995}. In particular, Eq.\ \ref{eq:Jacobi} implies that $(L, \el_1) \in \Ch$. 
Equivalently, an $L_\infty$-structure on a graded vector space $L$ is a degree $-1$ codifferential $\delta$ on the coalgebra $\bar{S}(\bs L)$. (See, for example, Thm.\ 2.4 in \cite{Lada-Markl:1995}.)
The correspondence between the structure maps 
\[
\delta^1_m \maps \S^{m}(\bs L) \to \bs L
\]
and the brackets is given by the formula
\begin{equation}\label{eq:struc_skew}
\delta^1_{m} = (-1)^{\frac{m(m-1)}{2}} \bs \circ
\el_{m} \circ {(\ds)}^{\tensor m}.
\end{equation}

Let $L$ and $L'$ be graded vector spaces. We recall that there is a one-to-one correspondence 
between collections $f=\{f_1,f_2,\ldots\}$
of skew-symmetric linear maps 
\begin{equation} \label{eq:morphism1}
f_k \maps \Lambda^k L \to L' \quad 1 \leq k < \infty
\end{equation}
with $\deg{f_k} = k-1$, and coalgebra morphisms
\[
F \maps \S(\bs L) \to \S(\bs L')
\]
whose degree $0$ structure maps $F^1_k \maps \S^k(\bs L) \to \bs L$
are given by the formula
\begin{equation} \label{eq:morph_eq1}
 F^{1}_{k} = (-1)^{\frac{k(k-1)}{2}} \bs \circ
 f_{k} \circ {(\ds)}^{\tensor k}.
\end{equation}

A {\bf morphism} (i.e. a weak $L_\infty$-morphism) of $L_\infty$-algebras 
\[
f \maps (L,\el) \to (L',\el')
\]
is a collection $f=\{f_1,f_2,\ldots\}$ of skew-symmetric linear maps as in \eqref{eq:morphism1}
whose corresponding coalgebra morphism \eqref{eq:morph_eq1} 
satisfies Eq.\ \ref{eq:dgmap} and therefore lifts to a morphism of dg coalgebras
\[
F \maps (\S(\bs L), \delta) \to (\S(\bs L'), \delta') 
\] 
We treat the category $\Linf$ of $L_\infty$-algebras  as a full subcategory of $\dgcocomu$. Hence, composition of morphisms in $\Linf$ is given by Eq.\ \ref{eq:comp}.

A morphism $f \maps (L,\el) \to (L',\el')$ is an \textbf{$L_\infty$-isomorphism} iff the linear map $f_1 \maps L \to L'$ is an isomorphism in $\Ch$. It is easy to show that this condition implies that $f$ corresponds to an actual isomorphism in the category $\dgcocomu$.

\begin{notation}
In contrast with some other conventions found in the literature, we will write weak $L_\infty$-morphisms in $\Linf$ using a single lower-case (Latin or Greek) letter, e.g. 
\[
f \maps (L,\el) \to (L',\el'),
\]
and the $k$-ary map in the collection $f$ will always be denoted by $f_k$. The dg coalgebra morphism encoded by the collection $f$ will always be written using the corresponding upper-case letter, e.g.  
$F \maps \S(\bs L) \to \S(\bs L')$.
\end{notation}

\begin{remark} \label{rmk:h0}
By definition, if $f \maps (L,\el) \to (L',\el')$ is an $L_\infty$-morphism, then the coalgebra morphism $F \maps \S(\bs L) \to \S(\bs L')$ satisfies $F \delta = \delta' F$. 
Therefore, by setting $m=1$ in Eq.\ \ref{eq:dgmap}, we observe that the degree 0 map $f_1$
is necessarily a chain map:
\[
f_1 \maps (L,\el_1) \to (L',\el'_1)
\]
Furthermore, if $x,y \in L$, then by setting $m=2$ in Eq.\ \ref{eq:dgmap} we see that
\begin{equation} \label{eq:morphism2.5}
f_1 \bigl ( \el_2(x,y) \bigr ) -  \el'_{2} \bigl( f_1(x),f_1(y) \bigr)
 = \el'_1 f_2(x,y).
\end{equation}

Since the bilinear bracket $\el_2$ on a $L_\infty$-algebra $(L,\el)$
induces a Lie algebra structure on $H_0(L)$,  it follows from 
Eq.\ \ref{eq:morphism2.5} that $H_0(f_{1}) \maps H_0(L) \to H_0(L')$ is a
morphism of Lie algebras. 
\end{remark}

Next, we recall three useful classes of $L_\infty$-morphisms.
\begin{definition} \label{def:quasi-iso}
Let $f \maps (L,\el) \to (L',\el')$ be a morphism of $L_\infty$-algebras. 

\begin{enumerate}
\item We say $f$ is a
\textbf{$L_\infty$-quasi-isomorphism} if the chain map
$f_1 \maps (L,\el_1) \to (L',\el'_1)$ is a
quasi-isomorphism, i.e.\ the induced map on homology
\[
H(f_{1}) \maps H(L) \to H(L')
\]
is an isomorphism.

\item We say $f$ is a \textbf{$L_\infty$-epimorphism} \cite[Def.\ 4.1]{Vallette:2014} 
if the chain map $f_1 \maps (L,\el_1) \to (L',\el'_1)$ is a
surjection in degree $n$ for all $n \in \Z$.


\end{enumerate}
\end{definition}

\subsubsection{Strict $L_\infty$-morphisms} \label{sec:strict}
Finally, recall that a morphism $f \maps (L,\el) \to (L',\el')$ in $\Linf$  is a \textbf{strict $L_\infty$-morphism}  iff $f_k =0$ for all $k \geq 2$. 
If 
\[
f=f_1 \maps (L,\el) \to (L',\el)
\]
is a strict morphism, then it follows from the definition that the restriction-projections \eqref{eq:morph_formula2} of the coalgebra morphism $F$ satisfy
\[
F^k_m =0, \quad \text{if $k \neq m$}.
\]
Combining this with Eq.\ \ref{eq:dgmap}, it follows that every
$k$-ary bracket $\el_k$ is preserved by the chain map $f_1$:
\[
\el'_k \circ f_1^{\tensor k} = f_1 \circ \el_k \quad \text{for all $k \geq 1$}
\] 

\subsection{The category of Lie $n$-algebras} \label{sec:lna}
Let $(L,\el)$ be a $L_\infty$-algebra. If the underlying graded vector space 
$L$ is concentrated in the first $n-1$ non-negative degrees, i.e.\
\[
L= \bigoplus_{i \geq 0}^{n-1} L_i
\] 
then $L$ is called a \textbf{Lie $n$-algebra} \cite[Def.\ 4.3.2]{Baez-Crans:2004}.

For a fixed $n \in \N \cup \{\infty\}$, we denote by $\LnA{n}$ the full subcategory of $\Linf$ 
whose objects are Lie $n$-algebras. Note that if $n < \infty$ then, for degree reasons, $\el_k=0$ for all $k > n+1$. Similarly, 
if $f \maps (L,\el) \to (L',\el')$ is a morphism in $\LnA{n}$, then $f_k =0$ for all $k > n$.   

\begin{example} \label{ex:lna}
We recall a few elementary, but important, examples. 
\begin{enumerate}
\item A Lie 1-algebra is just a Lie algebra. This gives a full and faithful embedding of the category of Lie algebras over $\kk$ into $\LnA{n}$ for any $n$. 

\item We say a Lie $n$-algebra $(L, \el)$ is \textbf{abelian} iff $\el_k =0$ for all $k \geq 2$.
Hence, an abelian Lie $n$-algebra is the same thing as a chain complex concentrated in degrees $0,\ldots,n-1$.

\item Let $\dgla^{<n}$ denote the category of chain Lie algebras whose underlying chain complex
is concentrated in degrees $0,\ldots,n-1$. There is a functor $\dgla^{<n} \to \LnA{n}$
which sends a chain Lie algebra $(L,d,[\cdot,\cdot])$ to the Lie $n$-algebra $(L,\el)$, where $\el_1=d$, $\el_2=[\cdot,\cdot]$ and $\el_k=0$ for all $k >2$. Under this embedding, dgla morphisms are mapped to strict Lie $n$-algebra morphisms.
\end{enumerate}
A simple but non-trivial example of a Lie 2-algebra is the ``string Lie 2-algebra'' \cite[Def.\ 8.1]{Henriques:2008}. See Sec.\ \ref{sec:postnikov} for further discussion.

\end{example}

\begin{proposition} \label{prop:finprod}
The category $\LnA{n}$ is closed under finite products.
Moreover, the forgetful functor $\LnA{n} \to \dgcocom$ creates products.
\end{proposition}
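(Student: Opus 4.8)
The plan is to show that $\LnA{n}$ has finite products and that the forgetful functor $U \maps \LnA{n} \to \dgcocom$ creates them, which simultaneously establishes closure under finite products. Since finite products are built from binary products and a terminal object, I would handle these two cases. The terminal object is the zero $L_\infty$-algebra (the trivial chain complex with all brackets zero), which corresponds under $U$ to the trivial reduced coalgebra $\S(0)=0$; verifying this is terminal in $\LnA{n}$ is immediate from the cofree description, since a coalgebra morphism $\S(\bs L) \to \S(0)$ is uniquely determined by its structure maps $F^1_k \maps \S^k(\bs L) \to 0$, which must all vanish. The substance is in the binary case.

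For the binary product of $(L,\el)$ and $(L',\el')$, the natural guess is the direct sum $L \oplus L'$ equipped with the \emph{direct-sum} $L_\infty$-structure: brackets $\el^{\oplus}_k := \el_k \oplus \el'_k$, acting componentwise and vanishing on mixed inputs. First I would check that this genuinely defines an $L_\infty$-algebra, i.e.\ that the associated degree $-1$ map on $\S(\bs(L \oplus L'))$ is a codifferential. The cleanest route is to work coalgebraically: there is a canonical isomorphism of graded coalgebras $\S(\bs(L \oplus L')) \cong \S(\bs L) \otimes \S(\bs L')$ in $\dgcocom$, and under this identification the direct-sum structure corresponds to the tensor-product codifferential $\delta \otimes \id + \id \otimes \delta'$ (with appropriate Koszul signs). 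Since the tensor product of two dg coalgebras is again a dg coalgebra, $\delta^{\oplus}$ is automatically a codifferential — this avoids grinding through Eq.\ \ref{eq:codiff} by hand. The underlying graded vector space $L \oplus L'$ is clearly concentrated in degrees $0,\ldots,n-1$, so the result lies in $\LnA{n}$.

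Next I would verify the universal property. The two strict projections $p \maps L \oplus L' \to L$ and $p' \maps L \oplus L' \to L'$ (with $p_1,p'_1$ the linear projections and all higher components zero) are morphisms in $\LnA{n}$: strictness makes the compatibility condition Eq.\ \ref{eq:dgmap} reduce to the statement that the linear projections commute with $\el^{\oplus}_k$ and $\el_k$, which holds by construction (cf.\ Sec.\ \ref{sec:strict}). Given any $(M,\el^M)$ with morphisms $g \maps M \to L$ and $h \maps M \to L'$, I would define the pairing $\langle g,h \rangle$ by $\langle g,h \rangle_k := (g_k, h_k) \maps \Lambda^k M \to L \oplus L'$. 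That this is an $L_\infty$-morphism follows because its associated coalgebra map is $(G,H) \maps \S(\bs M) \to \S(\bs L) \otimes \S(\bs L')$, which lifts to a dg map precisely because $G$ and $H$ do (the tensor-product codifferential is compatible componentwise). Uniqueness is forced by $p \circ \langle g,h\rangle = g$ and $p' \circ \langle g,h\rangle = h$ together with the fact that a coalgebra morphism into a cofree coalgebra is determined by its structure maps (Sec.\ \ref{sec:cofree}); composing with strict projections simply reads off the two components of each structure map.

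Finally, to upgrade "has products" to "$U$ creates products," I would observe that $U$ sends the above product diagram to the product diagram $\S(\bs L) \otimes \S(\bs L')$ in $\dgcocom$, and conversely that any product cone in $\dgcocom$ over the images of objects of $\LnA{n}$ has a unique lift: because the cofree coalgebras are closed under the relevant tensor product and the lifted object is again concentrated in the correct degrees, the limit computed in $\dgcocom$ automatically lands in $\LnA{n}$ with a unique $L_\infty$-structure mapping to it. The main obstacle I anticipate is bookkeeping the Koszul signs in identifying $\S(\bs(L \oplus L'))$ with $\S(\bs L) \otimes \S(\bs L')$ as dg coalgebras; once that isomorphism is pinned down, every other step is formal. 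I would therefore front-load the coalgebra isomorphism and let the universal property follow from the cofreeness and strictness observations already available in Sec.\ \ref{sec:cofree} and Sec.\ \ref{sec:strict}.
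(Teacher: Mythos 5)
Your proposal is correct and follows essentially the same route as the paper: the product is the direct sum with componentwise brackets, the projections are strict $L_\infty$-morphisms, and the identification of this with the categorical product in $\dgcocom$ rests on the coalgebra isomorphism $\S\bigl(\bs(L \oplus L')\bigr) \cong \S(\bs L) \tensor \S(\bs L')$. The paper states this more tersely, leaving the universal property implicit in that identification, while you spell out the pairing $\langle g,h\rangle_k = (g_k,h_k)$, its uniqueness via cofreeness, and the terminal object; these are elaborations, not a different argument.
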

\begin{proof}
Indeed, the categorical product of any two Lie $n$-algebras $(L,\el)$ and $(L',\el')$ is  the Lie $n$-algebra $(L \oplus L', \el \oplus \el')$ where for all $k\geq 1$
\begin{equation*} 
\el_k \oplus \el_k' \bigl( (x_1,x'_1),\ldots, (x_k,x'_k) \bigr):= \bigl( \el_k(x_1,\ldots,x_k),  
\el'_k(x'_1,\ldots,x'_k) \bigr).  
\end{equation*}
Furthermore, the usual projections $\pr \maps L \oplus L' \to L$, $\pr' \maps L \oplus L' \to L'$ lift to strict $L_\infty$-epimorphisms
\[
(L,\el) \xleftarrow{\pr} (L \oplus L', \el \oplus \el') \xto{\pr'} (L',\el')
\]
The product of Lie $n$-algebras then coincides with the product in $\dgcocom$ via the natural isomorphism of graded vector spaces $S(V \oplus V') \cong  S(V) \tensor S( V')$.
\end{proof}

\begin{definition} \label{def:LnA_morphs}
Let $f\maps (L,\el) \to (L',\el')$ be a morphism of Lie $n$-algebras. 
\begin{enumerate}
\item We say $f$ is a 
\textbf{weak equivalence} if the chain map $f_1 \maps (L,\el_1) \to (L',\el'_1)$ is a quasi-isomorphism.

\item We say $f$ is a 
\textbf{fibration} if the chain map $f_1 \maps (L,\el_1) \to (L',\el'_1)$ is surjective in all positive degrees.

\item We say $f$ is an 
\textbf{acyclic fibration} if $f$ is a weak equivalence and a fibration.
\end{enumerate}
\end{definition}
\myspace
Following the standard terminology from deformation theory, let
\[
\tanch \maps \LnA{n} \to \Chain^{\proj}
\]
denote the \textbf{tangent functor}, which is defined by the assignments:
\begin{equation} \label{eq:tan}
\begin{split}
(L,\el) &\mapsto (L, \el_1) \\
(L, \el) \xto{f} (L', \el') ~ &\mapsto  ~ (L, \el_1) \xto{f_1} (L'_1, \el'_1)
\end{split}
\end{equation}
Then it follows from Sec.\ \ref{sec:chproj} that $f \maps (L,\el) \to (L',\el')$ is a weak equivalence (resp.\ fibration) of Lie $n$-algebras if and only if $\tanch{f}$ 
is a weak equivalence (resp.\ fibration) in $\Chain^{\proj}$. We show later in Cor.\ \ref{cor:tanexact} that the tangent functor is an exact functor.

Before proceeding further, we record below some basic facts 
about weak equivalences and fibrations in $\LnA{n}$.
\begin{remark} \label{rmk:acyclic_fibs}
\mbox{}
\begin{enumerate}

\item Clearly, a morphism $f$ in $\LnA{n}$ is a weak equivalence if and only if it is an $L_\infty$-quasi-isomorphism. In particular, a morphism $f \maps \g \to \h$ between Lie algebras is a weak equivalence in $\LnA{1}$ if and only if $f$ is an isomorphism.

\item A morphism $f$ in $\LnA{n}$ is an acyclic fibration if and only if it is both an $L_\infty$-quasi-isomorphism  
and an $L_\infty$-epimorphism. Indeed, if $(L,\el)$ is a Lie $n$-algebra, then $H_0(L) = L_0/ \im \el_1$.

\item If $f \maps (L,\el) \to (L',\el')$ is a weak equivalence, then its corresponding
dg coalgebra map $F \maps \bigl( \S(\bs L), \delta \bigr) \to \bigl( \S(\bs L'), \delta' \bigr)$ 
is a quasi-isomorphism, i.e.\ $H(F) \maps H \bigl( \S(\bs L) \bigr) \xto{\cong} H\bigl( \S(\bs L') \bigr)$. One can verify this by equipping the chain complexes $\bigl(\S(\bs L),\delta \bigr)$ and $\bigl(\S(\bs L'),\delta' \bigr)$ with their canonical ascending filtrations induced by tensor word-length, and then analyzing the associated spectral sequence.

\item The converse of the above statement is false: there exist
morphisms in $\LnA{n}$ which are not weak equivalences, whose corresponding
coalgebra maps are nevertheless quasi-isomorphisms in $\dgcocomu$. 
The following example of such a morphism is probably well-known.

\myindent Let $\g:= \kk e_1 \oplus \kk e_2$ be the 2 dimensional solvable Lie algebra with bracket $[e_1,e_2]=e_1$, and let $\h:=\kk\ti{e}$ be the 1 dimensional abelian Lie algebra. 
Consider the Chevalley-Eilenberg homology of $\g$ and $\h$. We have
\[
\S(\bs \g) = (\kk \bs e_1 \oplus \kk \bs e_2 ) ~ \oplus ~ \kk \, \bs e_1 \!\! \vee \! \bs e_2, 
\qquad \S(\bs \h)= \kk \bs \ti{e}. 
\]
Obviously, $H \bigl(\S(\bs \h) \bigr) \cong \kk \bs \ti{e}$. Let $\delta_\g$ denote the codifferential encoding the Lie bracket of $\g$. Then Eq.\ \ref{eq:coder_formula1} and Eq.\ \ref{eq:struc_skew} imply that
\[
\delta_\g(\bs e_1) = \delta_\g(\bs e_2)=0, \quad \delta_\g(\bs e_1 \vee \bs e_2) = \bs [e_1,e_2]=\bs e_1.
\]
Hence, $H\bigl(\S(\bs \g) \bigr) \cong \kk \bs e_2$. 

\myindent Now consider the map $f \maps \g \to \h$, defined by $f(e_1):=0$, $f(e_2):=\ti{e}$. Clearly $f$ is a Lie algebra morphism, but it is not a weak equivalence (since it is not an isomorphism). Let $F \maps \bigl( \S(\bs \g), \delta \bigr) \to \bigl( \S(\bs \h), 0 \bigr)$ denote the dg coalgebra morphism corresponding to $f$. Since $f$ is strict, it follows from Eq.\ \ref{eq:morph_formula3} and Eq.\ \ref{eq:morph_eq1} that $F(\bs e_1) = 0$, $F(\bs e_1 \vee \bs e_2) = F(\bs e_1)\vee F(\bs e_2)=0$, and
\[
F(\bs e_2) = \bs \ti{e}.
\]
This last equality, combined with the above homology calculations, implies that $F$ is a quasi-isomorphism in $\dgcocomu$.  
\end{enumerate}
\end{remark}

\subsection{Factoring Lie $n$-algebra morphisms} \label{sec:LnA_fact}
We now give an explicit factorization for morphisms in $\LnA{n}$, which we will use to show the existence of path objects. 

\subsubsection*{Factoring chain maps in $\Chain^{\proj}$}
Suppose $f \maps (V,d_V) \to (W,d_W)$ is a morphism of chain complexes.  We will factor $f$ explcitly as $f=p_f \jmath$, where $\jmath$ is an acyclic cofibration and $p_f$ is a fibration
as defined in Sec.\ \ref{sec:chproj}.
Let $(P(W), d_{P(W)}) \in \Chain$ denote the chain complex with underlying graded vector space
\begin{equation} \label{eq:P(W)}
P(W)_0:= \{0\} \oplus W_1, \qquad P(W)_{i}:=W_i \oplus W_{i+1}, \quad \text{for $i \geq 1$},
\end{equation}
with differential
\begin{equation*} 
d_{P(W)}(x,y):=(0,x).
\end{equation*}
Note that the complex $(P(W), d_{P(W)})$ is acyclic. In particular, the linear map 
\begin{equation} \label{eq:hP(W)}
h \maps P(W) \to P(W)[1], \qquad h(x,y):=(y,0)
\end{equation}
is a contracting chain homotopy. Let $\pi \maps P(W) \to W$ denote the degree zero linear map
\begin{equation*} 
\pi(x,y):= x + d_Wy.
\end{equation*}
It is easy to verify that $\pi$ is a chain map and that it is surjective in all positive degrees. We can therefore factor $f$ into an acyclic cofibration followed by a fibration:
\begin{equation} \label{eq:chain_fac}
\begin{split}
V \xto{\jmath} &V \oplus P(W) \xto{p_f} W,\\ 
\jmath(v):=\bigl(v,(0,0) \bigr), & \qquad p_f \bigl( v, (x,y) \bigr):= f(v) + \pi(x,y)\\
\end{split}
\end{equation}

\subsubsection*{Factoring strict maps in $\LnA{n}$}
Next we extend the above factorization in $\Ch^{\proj}$ to strict morphisms in $\LnA{n}$.
\begin{proposition} \label{prop:strict_factor}
Let $f=f_1 \maps (L,\el) \to (L',\el')$ be a strict morphism of Lie $n$-algebras. Then $f$
can be factored in the category $\LnA{n}$ as
\[
(L,\el) \xto{\jmath} (\ti{L},\ti{\el}) \xto{\phi} (L',\el')
\] 
where $\jmath$ is a weak equivalence, and $\phi$ is a fibration in $\LnA{n}$.
\end{proposition}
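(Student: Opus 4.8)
The plan is to lift the explicit chain-level factorization $\tanch f = p_f \circ \jmath$ constructed just above to the category $\LnA{n}$. Concretely, I would take $\ti{L} := L \oplus P(L')$ as a graded vector space, with underlying complex $(\ti L, \ti\el_1)$ given by the differential $\ti\el_1 = \el_1 \oplus d_{P(L')}$ of the chain factorization; note that $P(L')$ lies in degrees $0,\dots,n-1$ whenever $L'$ does, so $\ti L \in \LnA{n}$. With this choice $\jmath\maps L \to \ti L$ is the inclusion $x \mapsto (x,0)$, which is injective and, because $P(L')$ is acyclic, induces an isomorphism on homology; hence it will be a weak equivalence. Likewise $\phi := p_f$ has underlying chain map $\phi_1 = p_f$ surjective in all positive degrees, so it will be a fibration. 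It therefore remains to equip $\ti L$ with higher brackets $\ti\el_k$ $(k \ge 2)$ extending the brackets of $L$, so that $\jmath$ becomes a \emph{strict} $L_\infty$-morphism and so that $\phi$ lifts to a (strict) $L_\infty$-morphism $(\ti L,\ti\el) \to (L',\el')$.

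I would define these brackets through the coalgebra picture of Sec.~\ref{sec:cofree}. Writing $q\maps \ti L \to L$ for the projection, the requirement that $\jmath$ and $\phi$ be strict forces the $\bs L$-component of each $\ti\el_k$ to be $\el_k(q-,\dots,q-)$ and the $\phi$-image of $\ti\el_k$ to be $\el'_k(\phi-,\dots,\phi-)$. Since $\phi$ is built from the surjection $\pi\maps P(L')\to L'$, these two conditions pin down each $\ti\el_k$ only up to a component valued in the contractible summand $P(L')$ (the kernel direction of the linear map $(q,\phi)\maps \ti L \to L \times L'$). I would fix this remaining freedom by constructing the codifferential $\delta$ on $\S(\bs \ti L)$ by induction on word length, using the contracting homotopy $h$ of $P(L')$ from Eq.~\eqref{eq:hP(W)} to solve, at each arity, for the $P(L')$-valued part of $\ti\el_k$.

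The main obstacle is verifying that this procedure closes, i.e.\ that the resulting $\delta$ satisfies $\delta^2 = 0$, equivalently that $\ti\el$ obeys the $L_\infty$-relations~\eqref{eq:Jacobi}. Projecting the arity-$m$ relation along $q$ and along $\phi$ reduces it, respectively, to the $L_\infty$-relation for $\el$ applied to the $L$-components and to the $L_\infty$-relation for $\el'$ applied to the $\phi$-images, both of which already hold. Hence the obstruction at each stage is a cycle valued in the acyclic complex $P(L')$, so by contractibility it is a boundary and can be absorbed into the next bracket via $h$; this is exactly the step that makes the induction go through. The one delicate point is degree $0$, where $\pi$ is surjective only onto $\im \el'_1$: there one checks, using the low-arity relations for $\el'$, that the obstruction already lies in $\im \el'_1$, and absorbs it using a fixed linear splitting of $\el'_1\maps L'_1 \to L'_0$ (available since $\kk$ is a field).

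Finally, reading off the structure maps of $\delta$ via Eqs.~\eqref{eq:coder_formula1}--\eqref{eq:coder_formula3}, together with the fact that $\jmath$ and $\phi$ are strict by construction (so the conditions~\eqref{eq:dgmap} hold componentwise), produces the promised factorization $(L,\el) \xto{\jmath} (\ti L,\ti\el) \xto{\phi} (L',\el')$ with $\jmath$ a weak equivalence and $\phi$ a fibration, all within $\LnA{n}$.
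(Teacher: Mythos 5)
Your skeleton coincides with the paper's --- the same $\ti{L} = L \oplus P(L')$, the same chain-level maps $\jmath$ and $p_f$, and an arity-by-arity induction driven by the contracting homotopy $h$ --- but you diverge at the decisive design choice, and that is where a genuine gap appears. The paper never attempts to make $\phi$ strict: it equips $\ti{L}$ with the \emph{product} $L_\infty$-structure, $\ti{\el}_k = (\el_k(\text{$L$-components}), 0)$ for $k \geq 2$, which requires no induction and makes $\jmath$ strict for free, and then constructs $\phi$ as a genuinely \emph{weak} $L_\infty$-morphism, producing its higher components $\Phi^1_m$ by obstruction theory. You instead fix the $L$-component of the brackets and try to build, by induction, a twisted $P(L')$-valued component of each $\ti{\el}_k$ so that both $\jmath$ and $\phi$ become strict.

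The gap is in your absorption step. To preserve strictness of $\phi$, a correction to $\ti{\el}_m$ must be valued in $\ker(q,\phi)$, and this complex is neither $P(L')$ nor contractible: in degree $i$ it consists of the elements $\bigl(0,(-\el'_1 y,\, y)\bigr)$ with $y \in L'_{i+1}$ (with the extra condition $\el'_1 y = 0$ in degree $0$), so its homology is $H_i\bigl(\ker(q,\phi)\bigr) \cong H_{i+1}(L')$, which is nonzero in general. You therefore face a dichotomy. If you take the primitive of the obstruction supplied by contractibility of $P(L')$ (your stated plan), that primitive generally has nonzero image under $\pi$, so after absorbing it the identity $\phi \circ \ti{\el}_m = \el'_m \circ \phi^{\tensor m}$ fails: $\phi$ is then not a strict morphism --- indeed not a morphism at all, since you construct no higher components for it --- and the hypothesis you invoke at the next stage (that the relation failure dies under $\phi$) collapses as well. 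If instead you insist the correction lie in $\ker(q,\phi)$, then target-contractibility is simply unavailable. The induction can be saved, but only by the mechanism the paper actually uses: the admissible corrections form the complex $\Hom\bigl(\S^m(\bs \ti{L})/\S^m(\bs L),\, \bs \ker(q,\phi)\bigr)$, and over a field this is acyclic because the \emph{source} $\S^m(\bs \ti{L})/\S^m(\bs L) \cong \coker J^m_m$ is contracted by $\id \tensor H$, the symmetrized extension of $h$ --- exactly the step in the paper's proof where the obstruction cocycle $c_m$ descends to $\coker J^m_m$ and is killed via $\ti{c}_m \circ (\id \tensor H)$. So your argument needs this source-acyclicity, not the target-acyclicity you invoke; alternatively, follow the paper and drop the demand that $\phi$ be strict.
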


For the proof, we'll use the following lemma:

\begin{lemma}[see Theorem A.1 \cite{Vallette:2014}]\label{lem:infty-obstruct}
Let $(\ti{L},\ti{\el})$ and $(L',\el')$ be Lie $n$-algebras. Let $m >1$ and suppose 
$\{\Phi^1_i \maps \S^i(\bs \ti{L}) \to \bs L' \}_{1 \leq i \leq m-1}$ is a collection of linear maps satisfying 
\[
\sum_{i=1}^k \delta^{\prime 1}_{i} \Phi^i_k = \sum_{i=1}^k \Phi^1_i \ti{\delta}^{i}_{k}
\qquad  \forall  k \leq  m-1,
\]
where $\ti{\delta}$ and $\delta'$ are the codifferentials encoding the $L_\infty$-structures on $\ti{L}$ and $L'$, respectively. Then:
\begin{enumerate}
\item The linear map $c_m \maps \S^m(\bs \ti{L}) \to \bs L'$ defined as
\[
c_m:= \sum_{k=1}^{m-1} \Phi^1_k \ti{\delta}^k_m - \sum_{k=2}^m \delta^{\prime 1}_k \Phi^k_m 
\]
is a degree $-1$ cycle in the chain complex $\bigl(\Hom  (\S^m(\bs \ti{L}), \bs L' ), \del \bigr)$, where
\[
\del c_m = \delta^{\prime 1}_1 \circ c_m  - (-1)^{\deg{c_m}} c_m \circ \ti{\delta}^m_m.  
\]

\item There exists a linear map $\Phi^1_m \maps \S^m(\bs \ti{L}) \to \bs L'$ such that the collection $\{\Phi^1_1,\ldots \Phi^1_{m-1},\Phi^1_m\}$ satisfies 
\[
\sum_{i=1}^m \delta^{\prime 1}_{i} \Phi^i_m = \sum_{i=1}^m \Phi^1_i \ti{\delta}^{i}_{m}
\]
if and only the homology class $[c_m]$ is trivial.
\end{enumerate} 
\end{lemma}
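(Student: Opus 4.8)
The plan is to reduce both parts to elementary homological algebra inside the Hom-complex $\bigl(\Hom(\S^m(\bs\ti L),\bs L'),\del\bigr)$, after recognizing $c_m$ as the word-length-$m$ corestriction of a single naturally defined map. First I would use cofreeness (Eq.\ \eqref{eq:morph_formula1}) to assemble the given structure maps $\{\Phi^1_i\}_{1\leq i\leq m-1}$, together with the convention $\Phi^1_j:=0$ for $j\geq m$, into an honest coalgebra morphism $\Phi\maps \S(\bs\ti L)\to\S(\bs L')$. Then I would introduce the degree $-1$ linear map $D:=\delta'\Phi-\Phi\ti\delta$. By the elementary composition rule $(\Psi\Theta)^1_k=\sum_i\Psi^1_i\Theta^i_k$ (valid for arbitrary linear maps between cofree coalgebras, by decomposing along symmetric word-length), its corestriction is
\[
D^1_k=\pr_{\bs L'}\circ D\vert_{\S^k(\bs\ti L)}=\sum_{i=1}^k\delta^{\prime 1}_i\Phi^i_k-\sum_{i=1}^k\Phi^1_i\ti\delta^i_k,
\]
i.e.\ exactly the defect of the morphism equation \eqref{eq:dgmap} at word-length $k$. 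The hypothesis thus says precisely that $D^1_k=0$ for all $k\leq m-1$, and a short bookkeeping of the definition of $c_m$ (using $\Phi^1_m=0$ for this auxiliary $\Phi$) gives $D^1_m=-c_m$. This already realizes $c_m$ as a chain in the stated complex.

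For part (1), the crucial observation is that $\delta' D+D\ti\delta=(\delta')^2\Phi-\Phi(\ti\delta)^2=0$, which holds on the nose because $\delta'$ and $\ti\delta$ are codifferentials. I would then extract the word-length-$m$ corestriction of this identity. Because $\delta'$ and $\ti\delta$ are coderivations and $\Phi$ is a morphism, $D$ is a coderivation relative to $\Phi$, hence, like any such map, is determined by $D^1$, with each component $D^k_m$ a sum of terms containing a single factor $D^1_j$ and $(k-1)$ factors of $\Phi^1$; for $k\geq2$ the remaining blocks force $j\leq m-1$, so $D^k_m=0$ by hypothesis. Consequently, in $(\delta' D)^1_m=\sum_k\delta^{\prime 1}_k D^k_m$ only the $k=1$ term survives, and in $(D\ti\delta)^1_m=\sum_k D^1_k\ti\delta^k_m$ only the $k=m$ term survives (since $D^1_k=0$ for $k\leq m-1$). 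This leaves $\delta^{\prime 1}_1 D^1_m+D^1_m\ti\delta^m_m=0$; substituting $D^1_m=-c_m$ and $\deg{c_m}=-1$ yields exactly $\del c_m=\delta^{\prime 1}_1 c_m+c_m\ti\delta^m_m=0$.

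For part (2), I would reorganize the level-$m$ morphism equation by isolating the terms containing the unknown $\Phi^1_m$: the summand $i=1$ on the left contributes $\delta^{\prime 1}_1\Phi^1_m$ and the summand $i=m$ on the right contributes $\Phi^1_m\ti\delta^m_m$, while every remaining term depends only on $\Phi^1_1,\dots,\Phi^1_{m-1}$ and reassembles into $c_m$. Hence the equation is equivalent to $\del\Phi^1_m=c_m$, so a solution $\Phi^1_m$ exists iff $c_m\in\im\del$; by part (1) $c_m$ is a $\del$-cycle, so this is equivalent to the vanishing of $[c_m]$ in $H_{-1}\bigl(\Hom(\S^m(\bs\ti L),\bs L'),\del\bigr)$. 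The main obstacle is the bookkeeping in part (1): justifying $D^k_m=0$ for $k\geq2$ from the relative-coderivation structure and tracking the Koszul signs so that the two surviving corestriction terms combine into precisely the differential $\del$ as normalized in the statement. A fully explicit alternative dispenses with the auxiliary morphism and verifies $\del c_m=0$ by expanding $\delta^{\prime 1}_1 c_m+c_m\ti\delta^m_m$ and repeatedly invoking the codifferential identities \eqref{eq:codiff} for $\delta'$ and $\ti\delta$ together with the hypothesis, but this is considerably more laborious.
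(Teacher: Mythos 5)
Your proof is correct. Note that the paper does not actually write out an argument for this lemma: it states that the verification is a direct computation left to the reader, citing the obstruction theory of Thm.\ A.1 in \cite{Vallette:2014} as an alternative, and your argument is precisely that obstruction-theoretic route made explicit and self-contained. The two steps that carry the weight are both sound: the defect $D=\delta'\Phi-\Phi\ti{\delta}$ of the truncated morphism (with $\Phi^1_j:=0$ for $j\geq m$) is a coderivation along $\Phi$, so each component $D^k_m$ with $k\geq 2$ is a sum of terms containing exactly one factor $D^1_j$ with $j\leq m-(k-1)\leq m-1$ and hence vanishes by hypothesis; extracting the word-length-$m$ corestriction of $\delta'D+D\ti{\delta}=0$ then yields $\delta^{\prime 1}_1 c_m+c_m\ti{\delta}^m_m=0$, which is $\del c_m=0$ in the statement's normalization since $\deg{c_m}=-1$, while isolating $\Phi^1_m$ in the level-$m$ morphism equation (legitimate, because $\Phi^i_m$ for $i\geq 2$ involves only $\Phi^1_j$ with $j\leq m-1$) gives $\del\Phi^1_m=c_m$ using $\deg{\Phi^1_m}=0$, so solvability is equivalent to the vanishing of $[c_m]$.
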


The lemma can be verified by direct computation, which we leave to the reader. 
Alternatively, both the lemma and Prop.\ \ref{prop:strict_factor}
follow from applying the obstruction theory developed in \cite{Vallette:2014} to weak $L_\infty$-morphisms between algebras over the $\mathsf{Lie}_\infty$ operad. 

\begin{proof}[Proof of Prop.\ \ref{prop:strict_factor}]
Let $f=f_1 \maps (L,\el) \to (L',\el')$ be a strict morphism of Lie $n$-algebras.
Via \eqref{eq:chain_fac}, we factor the chain map $f \maps (L, \el_1) \to (L', \el^{\prime}_1)$ in $\Chain^{\proj}$ as 
\[
L \xto{\jmath} \ti{L} \xto{p_f} L',
\]
where, for brevity, we denote by $\ti{L}$ the chain complex
\[
(\ti{L}, \ti{\el}_1):=\bigl ( L \oplus P(L'), \el_1 \oplus d_{P(L')} \bigr).
\]
We then extend the differential $\ti{\el}_1$ to the following $L_\infty$-structure:
\[
\ti{\el}_k \bigl( (v_1, \bb{v}^{~ \prime}_1), \ldots, (v_k, \bb{v}^{~ \prime}_k) \bigr ):= \bigl(\el_{k}(v_1,\ldots,v_k), (0,0) \bigr), \quad \forall k \geq 2, 
\]
for all $v_i \in L$ and $\bb{v}^{~\prime}_{i}=(x'_i,y'_i) \in P(L')$.  
Note that, by construction, if $L$ and $L'$ are concentrated in degrees $0,\ldots,n-1$, then $\ti{L}$ is as well. Hence, $(\ti{L}, \ti{\el}) \in \LnA{n}$.

It is easy to see that the inclusion of complexes $\jmath \maps L \to \tilde{L}$ lifts to a strict 
$L_\infty$-morphism $\jmath \maps (L,\el) \to (\ti{L},\ti{\el})$ which is, by construction, a weak equivalence in $\LnA{n}$.

Switching to the coalgebra picture, let $\ti{\delta}$ be 
the codifferential on $S(\bs \ti{L}) \cong  S(\bs L) \tensor S(\bs P(L'))$
that encodes the $L_\infty$-structure on $\ti{L}$.
We have the equality
\[
\ti{\delta} = \delta \tensor \id + \id \tensor \hat{\delta},
\]
where $\hat{\delta}$ denotes the extension of the differential $d_{P(L')}$ on $P(L')$ to $\S(\bs P(L'))$, i.e., $\hat{\delta}^1_1 = \bs d_{P(L')} \bs^{-1}$. 
Let $J$ denote the coalgebra map corresponding to the strict $L_\infty$-morphism $\jmath$.
Our goal is to construct a morphism in $\dgcocom$
\[
\Phi \maps \bigl( \S(\bs \ti{L}), \ti{\delta} \bigr) \to \bigl(\S(\bs L'),\delta'\bigr)
\]
that has the following properties:
\begin{enumerate}
\item $\Phi^1_1 = \bs \circ p_f \circ  \bs^{-1}$, 
\item $\Phi J = F$, and hence $\Phi^1_{k} J^k_k =0$ for all $k \geq 2$, since $f$ is strict.
\end{enumerate}
The above implies that the morphism $\Phi$, combined with $J$, will give us the desired factorization of $f$ in $\Linf$.

We construct $\Phi$ by induction. Let $\Phi^1_1 := \bs \circ p_f \circ  \bs^{-1}$. 
Let $m >1$ and suppose  $\{\Phi^1_i \maps \S^i(\bs \ti{L}) \to \bs L' ~ 
\vert ~1 \leq i \leq m-1 \}$
is a collection of linear maps satisfying 
\begin{equation*} 
\sum_{i=1}^k \delta^{\prime 1}_{i} \Phi^i_k = \sum_{i=1}^k \Phi^1_i \ti{\delta}^{i}_{k}
\qquad  \forall  k \leq  m-1,
\end{equation*}
and
\begin{equation} \label{eq:strict_fac2}
\Phi^1_{k} J^k_k =0, \quad  2 \leq k \leq m-1.
\end{equation} 
Part 1 of Lemma \ref{lem:infty-obstruct} implies that the degree $-1$ linear map
\[
c_m \maps \S^m(\bs \ti{L}) \to \bs L', \quad  
c_m:= \sum_{k=1}^{m-1} \Phi^1_k \ti{\delta}^k_n - \sum_{k=2}^m \delta^{\prime 1}_k \Phi^k_m, 
\]
as an element of the chain complex $\Hom_\kk \bigl(\S^m(\bs \ti{L}), \bs L')$,
satisfies 
\[
\del c_m = \delta^{\prime 1}_1 \circ c_m - (-1)^{\deg{c_m}} c_m \circ \ti{\delta}^m_m =0.
\]
Recalling Eq.\ \eqref{eq:comp}, which describes the composition of morphisms in $\LnA{n}$, 
we observe that Eq.\ \ref{eq:strict_fac2} implies that $c_m$ vanishes when restricted to the subspace 
$\im J^m_m$. Hence, $c_m$ descends to cocycle $\ti{c}_m$ in the subcomplex $\bigl(\Hom_{\kk} ( \coker J^m_{m} , \bs L'), \del \bigr)$ where: 
\begin{equation*} 
\coker J^m_{m} \cong  
\bigoplus_{i=0}^{m-1} S^{i}( \bs L) \tensor S^{m-i}( \bs P(L')) =
\bigoplus_{i+j=m} S^{i}( \bs L) \tensor \S^j( \bs P(L')),
\end{equation*} 
and where $\del$ denotes, by slight abuse of notation, the restriction of the differential on the ambient complex $\Hom_\kk \bigl(\S^m(\bs \ti{L}), \bs L')$.

Now let $h \maps P(L') \to P(L')[1]$ be the contracting chain homotopy defined in Eq.\ \ref{eq:hP(W)}. 
By the symmetric version of the  ``tensor trick'' (e.g. \ \cite[Sec.\ 10.3.6]{LoVa}), we extend $h$ to a contracting chain homotopy $H$ on the complex $\bigl(\S( \bs P(L') ), \hat{\delta} \bigr)$. Explicitly, the restriction of $H$ to length $k$ tensors
$H_k \maps \S^k( \bs P(L') ) \to \S^k( \bs P(L') )[1]$ is defined as:
\[
H_{k}(\bb{v}^{~\prime}_1,\ldots,\bb{v}^{~\prime}_k)
:= \sum_{\sigma \in \Sn_k} \sigma^{-1} \cdot \bigl( \id^{\tensor k-1} \tensor \bs h \bs^{-1} \bigr) \sigma \cdot 
(\bb{v}^{~\prime}_1,\ldots,\bb{v}^{~\prime}_k), \qquad \forall ~ \bb{v}^{~\prime}_i \in \bs P(L'),
\]
where $\sigma \cdot$ denotes the left action defined in Eq.\ \ref{eq:Sn_action}. A direct calculation shows that indeed $\id = \hat{\delta} H + H \hat{\delta}$. 
Since $\ti{\delta}^m_m = \sum_{i+j=m} (\delta^{i}_i \tensor \id + \id \tensor \hat{\delta}^j_j)$,
it follows that the map
\[
\id_{S(\bs L)} \tensor H \maps \coker J^{m}_{m} \to \coker J^{m}_{m}[1]
\]
is a contracting chain homotopy for the complex $(\coker J^m_m, \ti{\delta}^m_m)$. Therefore, the
linear map \newline $\ti{c}_m \circ (\id \tensor H) \maps \coker J^m_m \to \bs L'$ satisfies
\begin{equation} \label{eq:triv_cocycle}
\ti{c}_m = -\del \bigl(\ti{c}_m \circ (\id \tensor H) \bigr).
\end{equation}
Finally, we extend $H$ to all of $S(\bs P(L'))$ by declaring $H(1_\kk):=0$, and we let $\Phi^1_m \maps \S(\ti{L}) \to \bs L'$ be the linear map
\[
\Phi^1_m\bigl( (v_1, \bb{v}^{~ \prime}_1), \ldots, (v_k, \bb{v}^{~ \prime}_m) \bigr )  := -c_m \circ (\id \tensor H)\bigl( (v_1, \bb{v}^{~ \prime}_1), \ldots, (v_k, \bb{v}^{~ \prime}_m) \bigr ).
\] 
Hence Eq.\ \ref{eq:triv_cocycle} implies that $c_m = \del \Phi^1_k$. It then follows from 
part 2 of Lemma \ref{lem:infty-obstruct} that the collection $\{\Phi^1_1,\ldots \Phi^1_{m-1},\Phi^1_m\}$
satisfies
\[
\sum_{i=1}^k \delta^{\prime 1}_{i} \Phi^i_k = \sum_{i=1}^k \Phi^1_i \ti{\delta}^{i}_{k}
\qquad  \forall  k \leq  m,
\]
and $\Phi^1_{k} J^k_k =0$ for  $2 \leq k \leq m$. This completes the inductive step, and therefore the proof of the proposition.
\end{proof}

\begin{remark} 
The factorization recalled in the beginning of this section of a chain map $f \maps (V,d_V) \to (W,d_W)$ in $\Chain^{\proj}$ is functorial. Indeed, a commutative diagram in $\Chain$ of the form:
\begin{equation*}
\begin{tikzpicture}[descr/.style={fill=white,inner sep=2.5pt},baseline=(current  bounding  box.center)]
\matrix (m) [matrix of math nodes, row sep=2em,column sep=2em,
  ampersand replacement=\&]
  {  
V \& W \\
V' \& W' \\ 
}; 
  \path[->,font=\scriptsize] 
   (m-1-1) edge  node[auto] {$f$} (m-1-2)
   (m-2-1) edge  node[auto] {$f'$} (m-2-2)
   (m-1-1) edge  node[auto,swap] {$\alpha$} (m-2-1)
   (m-1-2) edge  node[auto] {$\beta$} (m-2-2)
  ;
\end{tikzpicture}
\end{equation*}
factors as
\begin{equation*}
\begin{tikzpicture}[descr/.style={fill=white,inner sep=2.5pt},baseline=(current  bounding  box.center)]
\matrix (m) [matrix of math nodes, row sep=2em,column sep=2em,
  ampersand replacement=\&]
  {  
V \& V \oplus P(W) \& W \\
V' \& V' \oplus P(W') \& W' \\ 
}; 
  \path[->,font=\scriptsize] 
   (m-1-1) edge  node[auto] {$\jmath$} (m-1-2)
   (m-1-2) edge  node[auto] {$p_f$} (m-1-3)
   (m-2-1) edge  node[auto] {$\jmath'$} (m-2-2)
   (m-2-2) edge  node[auto] {$p_{f'}$} (m-2-3)
   (m-1-1) edge  node[auto,swap] {$\alpha$} (m-2-1)
   (m-1-3) edge  node[auto] {$\beta$} (m-2-3)
   (m-1-2) edge  node[auto] {$\gamma$} (m-2-2)
  ;
\end{tikzpicture}
\end{equation*}
where $\gamma \maps V \oplus P(W) \to V' \oplus P(W')$ is the chain map
\[
 \gamma \bigl(v,(x,y) \bigr):= \bigl( \alpha(v), (\beta(x),\beta(y)). 
\]
It would be convenient if the factorization in Prop.\ \ref{prop:strict_factor} could be made functorial in a similar way, perhaps using the symmetrized homotopy $H \maps \S( \bs P(L') ) \to \S( \bs P(L') )[1]$. Then, Thm.\ \ref{thm:LnA_CFO} in Sec.\ \ref{sec:LnA_CFO} would imply that
$\lnaft$ is equipped with a functorial path object. We leave this as an open question.
\end{remark}

\subsubsection*{Factoring arbitrary maps in $\LnA{n}$}
Let $f \maps (L, \el) \to (L',\el')$ be an arbitrary (not necessarily strict)  $L_\infty$-morphism in $\LnA{n}$. Then $f$ can be factored into a weak equivalence composed with a fibration in the following 
way. First, we observe that the diagonal map $\diag \maps (L', \el') \to (L' \oplus L', \el' \oplus \el')$ is a strict $L_\infty$-morphism. Hence, Prop.\ \ref{prop:strict_factor} gives us an explicit factorization of the diagonal into a weak equivalence followed by a fibration
\[
(L', \el') \xto{\jmath}  (L^{\prime I}, \el^{\prime I}) \xto{\phi} (L' \oplus L', \el' \oplus \el').
\]
The Lie $n$-algebra $(L^{\prime I}, \el^{\prime I})$ is a path object
for $(L',\el')$, in the sense of Def.\ \ref{def:cfo}. When combined
with Cor.\ \ref{cor:fib_pback}, which concerns the existence of
pullbacks of (acyclic) fibrations, the existence of a path object for
$(L',\el')$ implies the existence of a factorization\footnote{This is
  Brown's Factorization Lemma. (See Lemma \ref{lem:fact})} of $f$. We
provide more details later in Cor.\ \ref{cor:factor} for morphisms in
$\lnaft$, our main category of interest.

\subsubsection{Strictification of fibrations} \label{sec:stric_fib}
In the last part of this section, we show that every fibration in $\LnA{n}$ can be factored into an isomorphism followed by a strict fibration. We will use this fact repeatedly throughout the paper.
The proof is a modification of a result of Vallette \cite{Vallette:2014} concerning the factorization of ``$\infty$-epimorphisms'' between $\Z$-graded homotopy algebras .

\begin{lemma} \label{lem:strict_fib}
Let $f \maps (L,\el) \to (L',\el')$ be a fibration between Lie $n$-algebras. 
Then there exists a Lie $n$-algebra $(L,\tilde{\el})$ and an 
isomorphism $\phi \maps (L,\tilde{\el}) \xto{\cong} (L,\el)$ such that
\[
f\phi \maps (L,\tilde{\el}) \to (L',\el') 
\]
is a strict fibration with  $f\phi=(f\phi)_1 = f_1$.
\end{lemma}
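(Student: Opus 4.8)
The plan is to work entirely in the coalgebra picture and to produce the isomorphism $\phi$ as a coalgebra automorphism $\Phi \maps \S(\bs L) \to \S(\bs L)$ with linear part $\Phi^1_1 = \id$, chosen so that the composite $F\Phi$ is a \emph{strict} coalgebra morphism. Since an $L_\infty$-morphism whose linear part is invertible is automatically an isomorphism (as recalled in Sec.\ \ref{sec:L_inf}), such a $\Phi$ will be invertible, and I may then transport the codifferential by setting $\ti{\delta} \defeq \Phi^{-1} \delta \Phi$. This defines a Lie $n$-algebra structure $\ti{\el}$ on the same underlying complex $L$ (so $(L,\ti{\el}) \in \LnA{n}$), the morphism $\phi \maps (L,\ti{\el}) \xto{\cong} (L,\el)$ encoded by $\Phi$ is an isomorphism with $\phi_1 = \id_L$, and $f\phi$ is encoded by the strict morphism $F\Phi$. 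As $(f\phi)_1 = f_1 \circ \phi_1 = f_1$ remains surjective in positive degrees, $f\phi$ is the desired strict fibration.

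It remains to construct the structure maps $\Phi^1_m \maps \S^m(\bs L) \to \bs L$ for $m \geq 2$ so that $F\Phi$ is strict, i.e.\ $(F\Phi)^1_m = 0$ for all $m \geq 2$. Using the composition formula (Eq.\ \ref{eq:comp}), $(F\Phi)^1_m = \sum_{p=1}^m F^1_p \Phi^p_m$. Because $\Phi^1_1 = \id$, the term $\Phi^m_m$ is the identity, and for $2 \leq p \leq m$ each $\Phi^p_m$ is built (via Eq.\ \ref{eq:morph_formula2}) only from the lower structure maps $\Phi^1_k$, $k \leq m-1$. Hence the strictness equation for each $m$ takes the form
\[
F^1_1 \Phi^1_m = -R_m, \qquad R_m \defeq \sum_{p=2}^m F^1_p \Phi^p_m,
\]
where $R_m$ depends only on $\Phi^1_1, \ldots, \Phi^1_{m-1}$, which are already fixed. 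I would solve these equations inductively in $m$.

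The crux is that each equation $F^1_1 \Phi^1_m = -R_m$ admits a solution. Here $F^1_1 = \bs \circ f_1 \circ \ds$ (Eq.\ \ref{eq:morph_eq1}), and the key degree observation is that for $m \geq 2$ the source $\S^m(\bs L)$ is concentrated in degrees $\geq 2$: indeed $\bs L$ sits in degrees $1, \ldots, n$, so a product of $m \geq 2$ of its elements has degree $\geq 2$. Consequently the degree $0$ map $R_m$ takes values in $(\bs L')_{\geq 2}$, which corresponds to $L'$ in strictly positive degrees. Since $f$ is a fibration, $f_1$ is surjective in all positive degrees, so $F^1_1$ maps onto $(\bs L')_{\geq 2}$; choosing, over our field $\kk$, a linear section $s$ of $F^1_1$ in degrees $\geq 2$ lets me set $\Phi^1_m \defeq -s \circ R_m$, which solves the equation since $F^1_1 \circ s = \id$ on $\im R_m \subseteq (\bs L')_{\geq 2}$. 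This is exactly the non-negatively graded refinement of Vallette's strictification of $\infty$-epimorphisms: the suspension pushes $\S^{\geq 2}(\bs L)$ entirely into degrees where $f_1$ is onto, so surjectivity in positive degrees (rather than in all degrees) suffices. I expect the main point requiring care to be this degree bookkeeping --- verifying that no component of $R_m$ lands in degree $\leq 1$, where $f_1$ need not be surjective --- after which the induction closes and $F\Phi$ is strict by construction, completing the factorization $f = (f\phi)\phi^{-1}$.
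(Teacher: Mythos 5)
Your proposal is correct and follows essentially the same route as the paper's own proof: an inductive construction of a coalgebra automorphism $\Phi$ with $\Phi^1_1=\id$, using a degree\nobreakdash-wise section of $F^1_1$ that exists precisely because $\S^{\geq 2}(\bs L)$ lands in suspension degrees $\geq 2$ where the fibration condition gives surjectivity, followed by transporting the codifferential via $\ti{\delta}=\Phi^{-1}\delta\Phi$. The paper packages your section $s$ as the map $\sigma$ (extended by zero in degree $1$) and defines $\Phi^1_m=-\sum_{k=2}^m \sigma F^1_k\Phi^k_m$, which is exactly your $\Phi^1_m=-s\circ R_m$.
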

\begin{proof}
Let $F \maps \S(\bs L) \to \S(\bs L')$ be the coalgebra morphism corresponding to the fibration $f$.
Then the linear map $F^1_1 \maps \bigoplus_{i \geq 1} \bs L_i \to \bigoplus_{i \geq 1} \bs L'_i$ is surjective in all degrees $i \geq 2$. Let $\sigma \maps \bigoplus_{i \geq 1} \bs L'_i \to \bigoplus_{i \geq 1} \bs L_i$ be a map of graded vector space whose restrictions satisfy the equalities
\[
\sigma \vert_{\bs L'_1} =0, \qquad F^1_1 \circ \sigma \vert_{\bigoplus_{i \geq 2} \bs L'_i} = \id.
\]
Let $\Phi^1_1 :=\id \maps \bs L \to \bs L$. Let $m\geq 2$ and suppose we have defined a sequence of degree 0 linear maps $\{\Phi^1_k \maps \S^k(\bs L) \to \bs L\}^{m-1}_{k \geq 1}$. It follows from Eq.\ \ref{eq:morph_formula2} that this sequence gives us well defined linear maps $\Phi^k_m \maps \S^m(\bs L) \to  \S^k(\bs L)$ for $2 \leq k \leq m$. Now define $\Phi^1_m \maps \S^m(\bs L) \to \bs L$ as:
\[
\Phi^1_m = -\sum_{k \geq 2}^m \sigma F^1_k \Phi^k_m. 
\]
This construction inductively yields a coalgebra isomorphism $\Phi \maps \S(\bs L) \to \S(\bs L)$. 
Now let $\delta$ be the codifferential on $\S(\bs L)$ corresponding to the Lie $n$-algebra structure on $L$. We define a new codifferential  $\ti{\delta}:= \Phi^{-1} \delta \Phi$. Clearly, 
we can promote $\Phi$ to an isomorphism  of dg coalgebras
$\Phi \maps \bigl (\S(\bs L), \ti{\delta} \bigr) \to \bigl (\S(\bs L), \delta \bigr)$. And since $\Phi^1_1 = \id$, it follows from Eq.\ \ref{eq:comp} that we have 
\[
(F\Phi)^1_1 = F^1_1 \maps \bs L \to \bs L'.
\]
It remains to show that $F\Phi$ is strict, i.e.\ $(F\Phi)^1_m=0$ for all $m\geq 2$. Eq.\ \ref{eq:comp} and the construction of $\Phi$ imply that
\[
(F\Phi)^1_m = - F^1_1 \bigl (\sum_{k \geq 2}^m \sigma F^1_k \Phi^k_m \bigr)  + \sum_{k \geq 2}^m F^1_k \Phi^k_m. 
\]
If $m \geq 2$, then for any homogeneous element $y \in \S^m (\bs L)$, we have $\deg{y} > 1$. Hence, $\deg{F^1_k \Phi^k_m(y)} >1$ for any $k \geq 1$. It then follows from the definition of $\sigma$ that $F^1_1 \bigl ( \sigma F^1_k \Phi^k_m(y) \bigr) = F^1_k \Phi^k_m(y)$. Therefore, $(F\Phi)^1_m=0$ for all $m\geq 2$. 
\end{proof}

\section{Pullbacks in $\LnA{n}$} \label{sec:pullbacks}
In this section, we give an explicit description of pullbacks of fibrations and acyclic fibrations in the category $\LnA{n}$. After recalling the construction of pullbacks in $\dgcocom$, 
we focus on the special case of pulling back strict fibrations in $\LnA{n}$. We then use 
Lemma \ref{lem:strict_fib} to address the more general non-strict case.

Consider a diagram of Lie $n$-algebras of the form $(L',\el') \xto{g} (L'',\el'') \xleftarrow{f} (L,\el)$. The category $\dgcocom$ is complete, so the pullback of the corresponding diagram of dg coalgebras exists:
\begin{equation} \label{diag:main_pback}
\begin{tikzpicture}[descr/.style={fill=white,inner sep=2.5pt},baseline=(current  bounding  box.center)]
\matrix (m) [matrix of math nodes, row sep=2em,column sep=3em,
  ampersand replacement=\&]
  {  
(\bar{P},\delta_P)  \& \bigl ( \S(\bs L), \delta  \bigr) \\
\bigl ( \S(\bs L'), \delta' \bigr) \& \bigl ( \S(\bs L''), \delta'' \bigr) \\
}; 
  \path[->,font=\scriptsize] 
   (m-1-1) edge node[auto] {$\ppr$} (m-1-2)
   (m-1-1) edge node[auto,swap] {$\ppr'$} (m-2-1)
   (m-1-2) edge node[auto] {$F$} (m-2-2)
   (m-2-1) edge node[auto] {$G$} (m-2-2)
  ;

  \begin{scope}[shift=($(m-1-1)!.4!(m-2-2)$)]
  \draw +(-0.25,0) -- +(0,0)  -- +(0,0.25);
  \end{scope}
\end{tikzpicture}
\end{equation}
The graded coalgebra $\bar{P}$ is the equalizer of the diagram
\[
\begin{tikzpicture}[descr/.style={fill=white,inner sep=2.5pt},baseline=(current  bounding  box.center)]
\matrix (m) [matrix of math nodes, row sep=2em,column sep=2em,
  ampersand replacement=\&]
  {  
\S( \bs L \oplus \bs L') \&
\S (\bs L'') \\
}; 

   \path[->,font=\scriptsize] 
($(m-1-1.east)+(0,0.2)$) edge node[above] {$F\ppr$} ($(m-1-2.west)+(0,0.2)$) 
($(m-1-1.east)+(0,-0.2)$) edge node[below] {$G\ppr'$} ($(m-1-2.west)+(0,-0.2)$) 
;   
\end{tikzpicture}
\]
which can be characterized as the largest sub-coalgebra of 
$\S(\bs L \oplus \bs L')$ contained in the vector space $\ker (F\ppr - G \ppr')$. 
By generalizing a construction of Sweedler \cite[Sec.\ 16.1]{Sweedler:1969}, we have the following explicit description of $\bar{P}$:
\begin{equation} \label{eq:coalg_pullback}
\begin{split}
\bar{P} = \Bigl \{ v \in \ker (F\ppr - G \ppr') ~ \vert ~ 
(\id \tensor F\ppr ) \rDelta(v) = (\id \tensor G\ppr' ) \rDelta(v)  \\ \in  \S(\bs L \oplus \bs L') 
\tensor \S(\bs L'') \Bigr \}.
\end{split}
\end{equation}
Above $\rDelta$ is the reduced comultiplication on $\S(\bs L \oplus \bs L')$. 
Using arguments analogous to those in the proof of Lemma 16.1.1 in \cite{Sweedler:1969}, it is not difficult to show 
that the restriction $\rDelta \vert_P$ gives $\bar{P}$ the structure of a cocommutative coalgebra. The codifferential $\delta_P$ is, of course, the restriction of the codifferential $\delta_\oplus$ on $\S(\bs L \oplus \bs L')$.

\subsection{Strict fibrations} \label{sec:strict_pullback}

The pullback of a strict fibration in $\LnA{n}$ has a convenient explicit description.

\begin{proposition} \label{prop:strict_pullback}
Suppose $f=f_1 \maps (L,\el) \to (L'',\el'')$ is a strict fibration in $\LnA{n}$ 
and $g \maps (L',\el') \to (L'',\el'')$ is an arbitrary morphism between
Lie $n$-algebras.  Let $(\ti{L},\ti{\el}_1) \in \Chain $ 
denote the pullback of the diagram of chain maps
$(L'_1,\el'_1) \xto{g_1} (L''_1,\el''_1) \xleftarrow{f_1} (L,\el_1)$.
\begin{enumerate}
\item The pullback square in $\Chain$ containing $f_1$ and $g_1$ lifts to a commutative diagram 
in $\LnA{n}$:
\begin{equation} \label{diag:strict_pullback}
\begin{tikzpicture}[descr/.style={fill=white,inner sep=2.5pt},baseline=(current  bounding  box.center)]
\matrix (m) [matrix of math nodes, row sep=2em,column sep=3em,
  ampersand replacement=\&]
  {  
(\ti{L},\ti{\el})  \& (L,\el) \\
(L', \el') \& ( L'',\el'') \\
}; 
  \path[->,font=\scriptsize] 
   (m-1-1) edge node[auto] {$$} (m-1-2)
   (m-1-1) edge node[auto,swap] {$$} (m-2-1)
   (m-1-2) edge node[auto] {$f$} (m-2-2)
   (m-2-1) edge node[auto] {$g$} (m-2-2)
  ;

\end{tikzpicture}
\end{equation}

\item Let $(\bar{P},\delta_P)$ denote the pullback of the diagram 
$(\S(\bs L'),\delta') \xto{G} (\S(\bs L''), \delta'') \xleftarrow{F} (\S(\bs L),\delta)$, where $F$ and $G$ are the dg coalgebra morphisms corresponding to $f$ and $g$, respectively.
Then there exists an isomorphism of dg coalgebras
\[
(\bar{P},\delta_P) \xto{\cong} ( \S( \bs \ti{L}), \ti{\delta})
\]
which identifies $(\S( \bs \ti{L}), \ti{\delta})$ as the pullback of $F$ and $G$ in 
$\dgcocom$. Hence, the diagram \eqref{diag:strict_pullback} is a pullback diagram in $\LnA{n}$.
\end{enumerate}
\end{proposition}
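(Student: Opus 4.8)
The plan is to work entirely in the coalgebra picture and to exhibit the pullback $(\bar{P},\delta_P)$ as the cofree dg coalgebra on the suspension of the chain-level pullback; both assertions then follow at once. Write $V=\bs L$, $V'=\bs L'$, $V''=\bs L''$, and let $\phi:=F^1_1=\bs f_1\ds\colon V\to V''$ and $\psi:=G^1_1=\bs g_1\ds\colon V'\to V''$ be the linear parts. Since $f$ is strict, $F=\S(\phi)$ and $F\ppr=\S(\phi\circ\pr_V)$ is a strict coalgebra map, whereas $G\ppr'$ also carries the higher structure maps of $G$. Set $W:=\bs\ti{L}$; as a graded vector space $W=\{(v,v')\in V\oplus V'\mid \phi(v)=\psi(v')\}$, with inclusion $\iota\colon W\hookrightarrow V\oplus V'$, and $W$ is concentrated in degrees $1,\ldots,n$, so that $\S(W)$ is a candidate Lie $n$-algebra.

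The core step is to construct inductively a coalgebra morphism $\Psi\colon \S(W)\to\S(V\oplus V')$ with $\Psi^1_1=\iota$ whose image lands in the equalizer $\ker(F\ppr-G\ppr')$. Because $F\ppr$ and $G\ppr'$ map into the cofree coalgebra $\S(V'')$, the identity $F\ppr\Psi=G\ppr'\Psi$ is equivalent to $(F\ppr\Psi)^1=(G\ppr'\Psi)^1\colon\S(W)\to V''$. Using the composition formula \eqref{eq:comp} together with the strictness of $F\ppr$ and of the projections $\ppr,\ppr'$, the level-$m$ part of this equation reads $\phi\,\pr_V\Psi^1_m-\psi\,\pr_{V'}\Psi^1_m=K_m$, where $K_m\colon\S^m(W)\to V''$ depends only on $G$ and on the already-constructed maps $\Psi^1_1,\ldots,\Psi^1_{m-1}$. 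For $m=1$ this is exactly the defining relation $\phi\,\pr_V\iota=\psi\,\pr_{V'}\iota$ of $W$. For $m\geq 2$ I would fix a section $\sigma$ of $\phi$ (which exists and may be taken to vanish on $V''_1$, precisely because $f$ is a fibration, i.e.\ $\phi$ is surjective in degrees $\geq 2$) and set $\Psi^1_m:=(\sigma K_m,0)$. The point is that $\S^m(W)$ lives in degrees $\geq m\geq 2$, so $K_m$ takes values in $\bigoplus_{d\geq 2}V''_d=\im\phi$, whence $\phi\,\sigma K_m=K_m$ solves the equation. This inductive step is where the fibration hypothesis is essential, and it is the main obstacle; the rest is bookkeeping of Koszul signs and shuffles.

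Having built $\Psi$, its image is a subcoalgebra of $\S(V\oplus V')$ contained in $\ker(F\ppr-G\ppr')$, hence contained in the maximal such subcoalgebra $\bar{P}$; so $\Psi$ factors as a coalgebra map $\Psi\colon\S(W)\to\bar{P}$. To see this is an isomorphism I would pass to the associated graded for the coradical (word-length) filtration. On the one hand $\Gr\Psi=\S(\iota)$ is injective. On the other hand, $\Gr_1\bar{P}$ consists of the primitives of $\S(V\oplus V')$ lying in $\ker(F\ppr-G\ppr')$, which is exactly $W$; and any graded subcoalgebra $C\subseteq\S(U)$ with $\Gr_1 C=W_0$ satisfies $C_m\subseteq\S^m(W_0)$, as one sees by applying the injective iterated reduced coproduct $\rdDelta{m-1}\colon\S^m(U)\to U^{\tensor m}$ and noting $\rdDelta{m-1}(C_m)\subseteq W_0^{\tensor m}$. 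Hence $\Gr\bar{P}=\S(W)$, so $\Gr\Psi$ is an isomorphism, and therefore so is $\Psi$.

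Finally, transporting the codifferential by $\ti\delta:=\Psi^{-1}\delta_P\Psi$ makes $\Psi\colon(\S(W),\ti\delta)\xto{\cong}(\bar{P},\delta_P)$ an isomorphism of dg coalgebras; since $\delta_P$ restricts the differential on $\S(V\oplus V')$ and $\Psi^1_1=\iota$, the linear part $\ti\delta^1_1$ is the suspension of the chain-level pullback differential, so $(\ti{L},\ti\el)\in\LnA{n}$ has $\ti\el_1$ equal to the pullback differential. The composites $\ppr\Psi$ and $\ppr'\Psi$ are the lifts of the two projections realizing the square \eqref{diag:strict_pullback}, and commutativity of that square is the equalizer identity $F\ppr\Psi=G\ppr'\Psi$; this proves part (1). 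Since $\dgcocom$ is complete with pullback $(\bar{P},\delta_P)$, the isomorphism $(\bar{P},\delta_P)\cong(\S(\bs\ti{L}),\ti\delta)$ is exactly the identification asserted in part (2), and the square is therefore a pullback in $\LnA{n}$.
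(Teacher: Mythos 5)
Your proposal is correct, but it organizes the argument genuinely differently from the paper. The paper introduces two explicit, mutually inverse coalgebra endomorphisms $H$ and $J$ of the whole product coalgebra $\S(\bs L' \oplus \bs L)$ (Eqs.\ \eqref{eq:H}--\eqref{eq:J}, Claim \ref{claim:1}), defines the codifferential on $\S(\bs \ti{L})$ by the explicit formula $\ti{\delta} = J \circ \delta_\oplus \circ H$, and then verifies by a lengthy direct computation (Claim \ref{claim:2}) that this formula actually preserves $\S(\bs \ti{L})$; commutativity of the square and the identification with $\bar{P}$ are then checked by further computations together with the universal property of $\bar{P}$. Your $\Psi$ is, in substance, the paper's $H$ restricted to the sub-coalgebra $\S(\bs\ti{L})$ --- your higher structure maps $(\sigma K_m, 0)$ reduce, after unwinding Eq.\ \eqref{eq:comp}, to exactly the $\sigma G^1_k$-corrections of Eq.\ \eqref{eq:H} --- but your logic runs in the opposite direction: you build $\Psi$ inductively so that the equalizer identity $F\ppr\Psi = G\ppr'\Psi$ holds by construction (this is where the fibration hypothesis enters, through the degreewise section $\sigma$ of $F^1_1$, the same mechanism as in the paper), then identify $\bar{P}$ with the cofree coalgebra on $W=\bs\ti{L}$ via the primitives/coradical-filtration argument, and only at the end produce the codifferential by transport, $\ti\delta := \Psi^{-1}\delta_P\Psi$. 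This completely bypasses the paper's hardest step (Claim \ref{claim:2}), since a transported codifferential preserves $\S(\bs\ti{L})$ tautologically; the price is the filtration argument, which the paper never needs. What the paper's more explicit route buys is reusability: the global endomorphisms $H$, $J$ and the formula $\ti\delta = J\delta_\oplus H$ are invoked verbatim later, in the proof of Prop.\ \ref{prop:MC-pullbacks} (through $h^B_\ast$, $j^B_\ast$), whereas your construction would require extracting the inverse $J$ and extending it to the ambient coalgebra separately. Two harmless imprecisions to fix: what you need is $\bigoplus_{d \geq 2} V''_d \sse \im \phi$ (not equality), i.e.\ that $\sigma$ is a genuine section in degrees $\geq 2$; and your claim that a subcoalgebra $C$ with primitives $W_0$ satisfies $C_m \sse \S^m(W_0)$ should be formulated on the associated graded of the coradical filtration, since elements of $\bar{P}$ need not be homogeneous in word length --- the iterated reduced coproduct argument you sketch is exactly the right repair.
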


Before we prove Prop.\ \ref{prop:strict_pullback}, we will need to discuss a few technical constructions.

\subsubsection*{Useful endomorphisms of the product coalgebra}
As above, consider a diagram of the form  $(L',\el') \xto {g} (L'',\el'') \xleftarrow{f=f_1} (L,\el)$
in $\LnA{n}$, in which $f$ is a strict fibration. In order to give an explicit description of the $L_\infty$-structure on the pullback, we first construct\footnote{To the best of our knowledge, this construction is due to Bruno Vallette. It is implicit in his proof of Thm.\ 4.1 in \cite{Vallette:2014}.} two auxiliary endomorphisms of the graded coalgebra $\S(\bs L' \oplus \bs L)$.

Throughout this section, we denote elements of the direct sum $\bs L' \oplus \bs L$ as vectors  $\bb{v}:=(v',v)$. 
We first give a convenient description of $\bs \ti{L}$, the suspension of the pullback of $f_1$, as a graded vector space. Since $f$ is a fibration, the linear map $F^1_1 \maps \bigoplus_{i \geq 1} \bs L_i \to \bigoplus_{i \geq 1} \bs L''_i$ is surjective in all degrees $i \geq 2$. Let $\sigma \maps \bigoplus_{i \geq 1} \bs L''_i \to \bigoplus_{i \geq 1} \bs L_i$ be a map of graded vector spaces whose restrictions satisfy the equalities
\begin{equation} \label{eq:sigma}
\sigma \vert_{\bs L''_1} =0, \qquad F^1_1 \circ \sigma \vert_{\bigoplus_{i \geq 2} \bs L''_i} = \id.
\end{equation}
Then we have
\begin{equation} \label{eq:Ltilde}
\bs \ti{L}_1 = \bs L'_1 \times_{\bs L''_{1}} \bs L_{1}, \quad \bs \ti{L}_{i \geq 2} = \bs L'_{i \geq 2} \oplus \ker F_1
\end{equation}
and a pullback diagram of graded vector spaces
\begin{equation} \label{diag:strict_pullback2}
\begin{tikzpicture}[descr/.style={fill=white,inner sep=2.5pt},baseline=(current  bounding  box.center)]
\matrix (m) [matrix of math nodes, row sep=2em,column sep=5em,
  ampersand replacement=\&]
  {  
\bs \ti{L}  \& \bs L \\
\bs L' \&  \bs L'' \\
}; 
  \path[->,font=\scriptsize] 
   (m-1-1) edge node[auto] {$\pr + \sigma G_1 \pr'$} (m-1-2)
   (m-1-1) edge node[auto,swap] {$\pr'$} (m-2-1)
   (m-1-2) edge node[auto] {$F_1$} (m-2-2)
   (m-2-1) edge node[auto] {$G_1$} (m-2-2)
  ;

  \begin{scope}[shift=($(m-1-1)!.4!(m-2-2)$)]
  \draw +(-0.25,0) -- +(0,0)  -- +(0,0.25);
  \end{scope}
\end{tikzpicture}
\end{equation}
Clearly, $\S(\bs \ti{L}) \sse \S(\bs L' \oplus \bs L)$ as graded coalgebras. 
We define the linear maps $H^1_k \maps \S^k(\bs L' \oplus \bs L) \to \bs L' \oplus \bs L$ to be
\begin{equation} \label{eq:H}
 H^1_1(\bb{v}):= (v', \sigma G_1(v') + v ), \quad
H^1_{k}(\bb{v}_1,\ldots,\bb{v}_k):= \bigl(0, \sigma G^1_{k}(v'_1,\ldots,v'_k) \bigr).
\end{equation}
Similarly, let $J^1_k \maps \S^k(\bs L' \oplus \bs L) \to \bs L' \oplus \bs L$ denote the linear maps
\begin{equation} \label{eq:J}
 J^1_1(\bb{v}):= (v', -\sigma G_1(v') + v ), \quad
J^1_{k}(\bb{v}_1,\ldots,\bb{v}_k):= \bigl(0, -\sigma G^1_{k}(v'_1,\ldots,v'_k) \bigr).
\end{equation}

\begin{claim}\label{claim:1}
We have the equalities $HJ = JH =\id_{\S(\bs L' \oplus \bs L)}$.
\end{claim}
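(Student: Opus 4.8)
The plan is to use the fact, recalled in Sec.~\ref{sec:cofree}, that a coalgebra endomorphism of $\S(\bs L' \oplus \bs L)$ is completely determined by its structure maps via \eqref{eq:morph_formula1}. Since $\id_{\S(\bs L' \oplus \bs L)}$ is the coalgebra morphism whose structure maps are $\id^1_1 = \id$ and $\id^1_m = 0$ for all $m \geq 2$, it suffices to show that $(HJ)^1_1 = \id = (JH)^1_1$ and that $(HJ)^1_m = 0 = (JH)^1_m$ for all $m \geq 2$. To compute these I would apply the composition formula \eqref{eq:comp}, which gives
\[
(HJ)^1_m = \sum_{p=1}^m H^1_p J^p_m,
\]
where the restriction-projections $J^p_m$ are expressed in terms of the structure maps $J^1_k$ via \eqref{eq:morph_formula2}.

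The key observation, read off directly from \eqref{eq:H} and \eqref{eq:J}, is that for every $k \geq 2$ the maps $H^1_k$ and $J^1_k$ take values in the summand $\{0\} \oplus \bs L$ and depend only on the $\bs L'$-components of their inputs; equivalently, writing $\pr_{\bs L'} \maps \bs L' \oplus \bs L \to \bs L'$ for the projection, one has $\pr_{\bs L'} \circ H^1_k = 0 = \pr_{\bs L'} \circ J^1_k$ for $k \geq 2$, while $\pr_{\bs L'} \circ H^1_1 = \pr_{\bs L'} = \pr_{\bs L'} \circ J^1_1$. By \eqref{eq:morph_formula2}, each summand of $J^p_m$ is a $\odot$-product $J^1_{k_1}(\cdots) \odot \cdots \odot J^1_{k_p}(\cdots)$ with $k_1 + \cdots + k_p = m$. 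For $p \geq 2$ the map $H^1_p$ factors through the projection of each of its $p$ slots onto $\bs L'$, so it annihilates any such product in which some $k_i \geq 2$. When $2 \leq p < m$ at least one $k_i \geq 2$ necessarily occurs, whence $H^1_p J^p_m = 0$. Thus for $m \geq 2$ only the terms $p = 1$ and $p = m$ survive.

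It then remains to compute and cancel these two terms. For $p = m$, formula \eqref{eq:morph_formula3} gives $J^m_m(\bb{v}_1,\ldots,\bb{v}_m) = J^1_1(\bb{v}_1) \odot \cdots \odot J^1_1(\bb{v}_m)$, whose $\bs L'$-slots are $v'_1,\ldots,v'_m$, so $H^1_m J^m_m(\bb{v}_1,\ldots,\bb{v}_m) = \bigl(0, \sigma G^1_m(v'_1,\ldots,v'_m)\bigr)$. For $p = 1$, substituting $J^1_m(\bb{v}_1,\ldots,\bb{v}_m) = \bigl(0, -\sigma G^1_m(v'_1,\ldots,v'_m)\bigr)$ into $H^1_1$ and using $G_1(0)=0$ yields $H^1_1 J^1_m(\bb{v}_1,\ldots,\bb{v}_m) = \bigl(0, -\sigma G^1_m(v'_1,\ldots,v'_m)\bigr)$. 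These two contributions cancel, so $(HJ)^1_m = 0$ for $m \geq 2$. Finally, a direct substitution shows $(HJ)^1_1(\bb{v}) = H^1_1 J^1_1(\bb{v}) = \bigl(v', \sigma G_1(v') + (-\sigma G_1(v') + v)\bigr) = (v',v) = \bb{v}$, so $(HJ)^1_1 = \id$. This proves $HJ = \id$, and the identity $JH = \id$ follows from the same argument with the roles of $H$ and $J$ interchanged, again using $\pr_{\bs L'} \circ H^1_k = 0$ for $k \geq 2$ and $G_1(0) = 0$.

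The only real obstacle is the bookkeeping for the restriction-projections $J^p_m$ and the verification that the cross terms vanish; this is immediate once the triangular structure of $H^1_k$ and $J^1_k$ (landing in $\bs L$ and factoring through $\pr_{\bs L'}$ for $k \geq 2$) is isolated, so I expect no genuine difficulty beyond careful indexing.
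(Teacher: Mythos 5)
Your proposal is correct and follows essentially the same route as the paper's proof: both use the composition formula \eqref{eq:comp} together with the expansion \eqref{eq:morph_formula2} of $J^p_m$ into $\odot$-products of the $J^1_{k_i}$, observe that $\pr' J^1_{k_i}=0$ for $k_i\geq 2$ forces $H^1_p J^p_m=0$ whenever $2\leq p<m$, and then cancel the two surviving terms $H^1_1 J^1_m$ and $H^1_m J^m_m$, with $JH=\id$ handled \emph{mutatis mutandis}. The only cosmetic difference is that you spell out explicitly the cofreeness reduction (that it suffices to match structure maps of $HJ$ with those of the identity), which the paper leaves implicit.
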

\begin{proof}
Indeed, using the definition of $\sigma$, a direct computation verifies that $(HJ)^1_1 = H^1_1J^1_1 = \id_{\bs L' \oplus \bs L}$. Now suppose $m \geq 2$. It remains to show $(HJ)^1_m=0$. It follows from Eq.\ \ref{eq:comp} that 
$(HJ)^1_m = \sum_{k=1}^m H^1_kJ^k_m$.
From Eq.\ \ref{eq:morph_formula2}, we see that the formula for $J^k_m$ involves a summation of tensor products of linear maps of the form
\begin{equation} \label{eq:strict_pullback0}
\sum_{j_1 + j_2 + \cdots + j_k =m}J^1_{j_1} \tensor J^1_{j_2} \tensor \cdots  \tensor J^1_{j_k}. 
\end{equation}
Hence, if $k < m$, then in each term of above sum, there exists a $j_i > 1$, 
and so it follows from the definition \eqref{eq:J} of $J$ that $\pr'J^1_{j_i}=0$. 
Combining this observation with the definition \eqref{eq:H} of $H$, we deduce that 
if $2 \leq k < m$, then $H^1_kJ^k_m =0$. Therefore,
\begin{align*}
& (HJ)^1_m (\bb{v}_1,\ldots,\bb{v}_m) = H^1_1J^1_m(\bb{v}_1,\ldots,\bb{v}_m) + H^1_mJ^m_m(\bb{v}_1,\ldots,\bb{v}_m)  \\
&\quad = H^1_1\bigl(0, -\sigma G^1_m(v'_1,\ldots,v'_m) \bigr) + H^1_m \Bigl(
(v'_1, -\sigma G_1(v'_1) + v_1), (v'_2, -\sigma G_1(v'_2) + v_2) \ldots, \\
& \quad \qquad \ldots, (v'_m, -\sigma G_1(v'_m) + v_m) \Bigr) \\
& \quad =\bigl( 0, -\sigma G^1_m(v'_1,\ldots,v'_m) \bigr) + 
\bigl( 0, \sigma G^1_m(v'_1,\ldots,v'_m) \bigr)  = 0.
\end{align*}
Hence, $HJ =\id_{\S(\bs L' \oplus \bs L)}$. The same proof (\textit{mutatis mutandis}) shows that 
$JH =\id_{\S(\bs L' \oplus \bs L)}$, and so the claim has been verified.
\end{proof}

Now let $\delta_{\oplus}$ denote the codifferential on the product $\S(\bs L' \oplus \bs L)$, and define
\[
\ti{\delta}:= J \circ \delta_{\oplus} \circ H \vert_{\S(\bs \ti{L})}.
\]

\begin{claim} \label{claim:2}
$\ti{\delta}$ induces a well-defined codifferential on $\S(\bs \ti{L})$.
\end{claim}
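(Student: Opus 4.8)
The plan is to separate the algebraic content of the claim (that $\tilde\delta$ is a codifferential) from its geometric content (that it preserves the subcoalgebra $\S(\bs\tilde L)$), and to dispatch the former for free. By Claim \ref{claim:1}, $H$ and $J$ are mutually inverse automorphisms of the graded coalgebra $\S(\bs L'\oplus\bs L)$. Conjugating the codifferential $\delta_{\oplus}$ by such an automorphism again yields a codifferential: the coalgebra-morphism identity $\Delta H = (H\tensor H)\Delta$ shows that $J\delta_{\oplus}H$ is a degree $-1$ coderivation on $\S(\bs L'\oplus\bs L)$, and $(J\delta_{\oplus}H)^2 = J\delta_{\oplus}^2 H = 0$. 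Hence the only thing left to check is that $\tilde\delta = J\delta_{\oplus}H$ maps the subcoalgebra $\S(\bs\tilde L)$ into itself; once this is known, the coderivation identity and $\tilde\delta^2 = 0$ restrict automatically to $\S(\bs\tilde L)$, since both sides of the coderivation identity land in $\S(\bs\tilde L)\tensor\S(\bs\tilde L)$ as $\S(\bs\tilde L)$ is a subcoalgebra.

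To prove this well-definedness I would route the restriction through the pullback coalgebra $\bar P$ of \eqref{eq:coalg_pullback}. Since its codifferential $\delta_P$ is exactly $\delta_{\oplus}\vert_{\bar P}$, so that $\delta_{\oplus}(\bar P)\subseteq\bar P$, it suffices to establish the two inclusions
\[
H\bigl(\S(\bs\tilde L)\bigr)\subseteq\bar P \qquad\text{and}\qquad J(\bar P)\subseteq\S(\bs\tilde L),
\]
for then $\tilde\delta(\S(\bs\tilde L))\subseteq J\delta_{\oplus}(\bar P)\subseteq J(\bar P)\subseteq\S(\bs\tilde L)$. (Together these also yield $H(\S(\bs\tilde L)) = \bar P$, which is what Prop.\ \ref{prop:strict_pullback}(2) ultimately requires.)

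For the first inclusion, recall that $\bar P$ is the largest subcoalgebra contained in $\ker(F\pr - G\pr')$. As $H(\S(\bs\tilde L))$ is itself a subcoalgebra (the image of a subcoalgebra under the coalgebra map $H$), it is enough to verify that $(F\pr - G\pr')\circ H$ vanishes on $\S(\bs\tilde L)$. Here I would exploit that $f$ is strict, so $F$ is induced by the single map $F_1$ with $F^1_k = 0$ for $k\geq 2$; combined with the formulas \eqref{eq:H} for $H$, the relations $F^1_1\sigma = \id$ on $\bigoplus_{i\geq 2}\bs L''_i$ and $\sigma\vert_{\bs L''_1}=0$, and the description \eqref{eq:Ltilde} of $\bs\tilde L$ (generators in $\ker F_1$ in degrees $\geq 2$, honest fibre product in degree $1$), the composite collapses term by term at the level of structure maps. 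For the second inclusion I would use cofreeness: a subcoalgebra of $\S(\bs L'\oplus\bs L)$ lies in $\S(\bs\tilde L)$ iff its projection $\pr_1$ onto cogenerators lands in $\bs\tilde L$. Thus it suffices to show $\pr_1 J(\bar P)\subseteq\bs\tilde L$, i.e.\ that the $\bs L$-component of $\pr_1 J(x)$ lies in $\ker F_1$ (in degrees $\geq 2$) for every $x\in\bar P$; this is precisely the defining equation $F\pr(x)=G\pr'(x)$ of $\bar P$ read off in cogenerator degree, after applying $F_1$ and invoking $F^1_1\sigma=\id$ once more.

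The main obstacle is the careful degree bookkeeping in these two structure-map computations, in particular isolating the distinguished behavior in degree $1$, where $\sigma$ vanishes and $\bs\tilde L$ is the genuine fibre product rather than $\bs L'\oplus\ker F_1$. The key simplification that keeps the argument short is that the higher components $G^1_k$ ($k\geq 2$) of the arbitrary morphism $g$ automatically output elements of degree $\geq 2$ (each argument has degree $\geq 1$ and there are $k\geq 2$ of them), so they never disturb the degree-$1$ analysis and are exactly absorbed by the $\sigma$-corrections built into $H$ and $J$.
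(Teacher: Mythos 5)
Your argument is correct, but it takes a genuinely different route from the paper's. The paper proves Claim \ref{claim:2} head-on: it reduces, via cofreeness, to showing $\im \ti{\delta}^1_m \sse \bs \ti{L}$ for all $m$, and then runs a long case analysis (on $m$ and on degree) at the level of structure maps, tracking $\delta_\oplus$ explicitly and using that $G$ is a \emph{dg} coalgebra morphism. You instead derive the claim from two purely coalgebraic inclusions, $H\bigl(\S(\bs \ti{L})\bigr) \sse \bar{P}$ and $J(\bar{P}) \sse \S(\bs \ti{L})$, together with the soft facts that $\bar{P}$, being the pullback in $\dgcocom$, is stable under $\delta_\oplus$, and that conjugating a codifferential by the mutually inverse coalgebra automorphisms $H$, $J$ of Claim \ref{claim:1} again yields a codifferential. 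This is non-circular: your two inclusions are exactly the computations the paper performs \emph{after} Claim \ref{claim:2} --- the commutativity of \eqref{diag:strict_pullback3} in the proof of statement (1) of Prop.\ \ref{prop:strict_pullback}, and $\im J\vert_{\bar{P}} \sse \S(\bs \ti{L})$ in the proof of statement (2) --- and neither of those uses Claim \ref{claim:2} or any differential; they need only strictness of $F$, the formulas for $H$, $J$, $\sigma$, and the degree bookkeeping you describe. What your route buys: the heaviest computation in the paper (the differential-level case analysis of Claim \ref{claim:2}) is eliminated, absorbed into the categorical fact that the dg pullback $\bar{P}$ is a dg subcoalgebra; and since the two inclusions are needed anyway, the whole of Prop.\ \ref{prop:strict_pullback} is streamlined (your parenthetical $H(\S(\bs \ti{L})) = \bar{P}$ is precisely the isomorphism of statement (2)). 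What the paper's route buys: Claim \ref{claim:2} stays self-contained, so the dg coalgebra $(\S(\bs \ti{L}), \ti{\delta})$ is constructed without invoking the coalgebra pullback at all. One point to watch when writing up your inclusion $J(\bar{P}) \sse \S(\bs \ti{L})$: an element of $\bar{P}$ of total degree $1$ is necessarily primitive, and for it the criterion is not ``$\bs L$-component in $\ker F_1$'' but the fibre-product condition $F_1 \pr = G_1 \pr'$ in degree $1$ (where $\sigma$ vanishes and $J$ acts as the identity); you flag this in your final paragraph, and it must indeed be handled as a separate case, exactly as in the paper's proof of statement (2).
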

\begin{proof}
Note that if $\im \ti{\delta} \sse \S(\bs \ti{L})$, then
Claim \ref{claim:1} implies that $\ti{\delta}^2=0$, and hence $\ti{\delta}$ is a codifferential.
Therefore, all we need to show is that $\im \ti{\delta}^1_m \sse \bs \ti{L}$ for $m \geq 1$.  
We proceed by considering a few cases. \\

\underline{Case $m=1$}: If $\bb{v} \in \bs \ti{L}_{i \geq 2}$, then 
it follows from Eq.\ \ref{eq:comp} and the definitions of $H$ and $J$ that:
\[
\ti{\delta}^1_1(\bb{v}) = J^1_1 (\delta_{\oplus})^1_1  H^1_1(\bb{v}) =  
\bigl(\delta'^1_1(v'), - \sigma G_{1}(\delta'^1_1(v')) + \delta^1_1\sigma G_1(v') + \delta^1_1(v) \bigr).
\]

If $\deg{\bb{v}}=2$, then $(\delta_{\oplus})^1_1\bb{v} \in \bs L'_1 \times_{\bs L''_{1}} \bs L_{1}$. Therefore, $\sigma G_1(\delta'^1_1(v'))=0$ and so:
\[
F_1 \bigl (- \sigma G_{1}(\delta'^1_1(v')) + \delta^1_1\sigma G_1(v') + \delta^1_1(v) \bigr) = 
F_1 ( \delta^1_1(v)) = \delta''^1_1(G_1(v')) = G_1 ( \delta'^1_1(v')). 
\]
Hence, $\ti{\delta}^1_1(\bb{v}) \in \bs \ti{L}_1$. If $\deg{\bb{v}} > 2$, then $v \in \ker F_1$ and so:
\[
\begin{split}
 F_1 \bigl (- \sigma G_{1}(\delta'^1_1(v')) + \delta^1_1\sigma G_1(v') + \delta^1_1(v) \bigr) &= 
- G_{1}(\delta'^1_1(v') + \delta''^1_1 \bigl(F\sigma G_1(v') + F(v) \bigr)  \\
&= - G_{1}(\delta'^1_1(v') + \delta''^1_1 G_1(v') = 0
\end{split}
\]
Hence, $\ti{\delta}^1_1(\bb{v}) \in \bs \ti{L}_{ i > 1}$.\\

\underline{Case $m \geq 2$:} Let $\bb{v}_1,\ldots, \bb{v}_m \in \bs \ti{L}$. 
It follows from Eq.\ \ref{eq:comp} that
\begin{equation} \label{eq:strict_pullback0.5}
\ti{\delta}^1_m(\bb{v}_1,\ldots, \bb{v}_m)
= \sum_{k \geq 1}^m \sum_{l \geq 1}^k J^1_l (\delta_{\oplus})^l_k  H^k_m(\bb{v}_1,\ldots, \bb{v}_m).
\end{equation}
First suppose that $\deg{(\delta_{\oplus})^l_k  H^k_m\bigr)(\bb{v}_1,\ldots, \bb{v}_m)}=1$. Then, for degree reasons it must be the case that $m=2$, and $\bb{v}_1, \bb{v}_2 \in \bs \ti{L}_1$ are in lowest degree. This implies that
$(\delta_{\oplus})^2_2 H^2_{2}(\bb{v}_1, \bb{v}_2) = (\delta_{\oplus})^2_2(H^1_1(\bb{v}_1) \vee H^1_1(\bb{v}_2))=0$. 
Therefore, by expanding Eq.\ \ref{eq:strict_pullback0.5} we obtain
the equality
\[
\begin{split}
\ti{\delta}^1_2(\bb{v}_1, \bb{v}_2) & = 
J^1_1 \bigl( (\delta_{\oplus})^1_1 H^1_{2}(\bb{v}_1, \bb{v}_2) + (\delta_{\oplus})^1_2 H^2_{2}(\bb{v}_1, \bb{v}_2) \bigr)\\
&= \bigl(0, \delta^1_1\sigma G^1_2(v'_1,v'_2) \bigr) + \bigl( \delta'^1_2(v'_1,v'_2), \delta^1_2(v_1,v_2) \bigr).
\end{split}
\]
Since $F$ corresponds to a strict $L_\infty$-morphism, we have $F_1 \bigl(\delta^1_2(v_1,v_2) \bigr) = 
\delta''^1_2 \bigl(F_1(v_1),F_1(v_2) \bigr)$. Furthermore, since $\bb{v}_1, \bb{v}_2 \in \bs \ti{L}_1$, we have $\bigl(F_1(v_1),F_1(v_2) \bigr) =  \bigl(G_1(v'_1),G_1(v'_2) \bigr)$. From these two equalities, we deduce that
\begin{equation} \label{eq:strict_pullback1}
F_1 \bigl( \delta^1_1\sigma G^1_2(v'_1,v'_2) + \delta^1_2(v_1,v_2) \bigr) = \bigl(
\delta''^1_{2} G^2_2 +  \delta''^1_{1} G^1_2  \bigr)(\bb{v}_1,\bb{v}_2).  
\end{equation}
Since $G$ is a morphism of dg-coalgebras, we have 
\[
\bigl(\delta''^1_{2} G^2_2 +  \delta''^1_{1} G^1_2 \bigr)(\bb{v}_1,\bb{v}_2)  = 
\bigl( G^1_1 \delta'^1_2 + G^1_2 \delta'^2_2 \bigr)(\bb{v}_1,\bb{v}_2) = 
G^1_1 \delta'^1_2 (\bb{v}_1,\bb{v}_2). 
\]
By substituting this last equality into Eq.\ \ref{eq:strict_pullback1}, we conclude that
\[
G_1(\delta'^1_2(v'_1,v'_2)) = F_1 \bigl( \delta^1_1\sigma G^1_2(v'_1,v'_2) + \delta^1_2(v_1,v_2) \bigr),
\] 
and hence $\ti{\delta}^1_2(\bb{v}_1, \bb{v}_2) \in \bs \ti{L}_1$.

Now we consider the remaining sub-case and suppose $\deg{\bb{v}_1\ + \ldots + \bb{v}_m} > 2$.
From Eq.\ \ref{eq:morph_formula2}, we see that the formula for $H^k_m$ consists of a sum of tensor products of linear maps $H^1_{j_1} \tensor H^1_{j_2} \tensor \cdots  \tensor H^1_{j_k}$, 
just as $J^k_m$ involved the summation \eqref{eq:strict_pullback0}. Hence, we can apply the argument used in the verification of Claim \ref{claim:1} to the present case and deduce that 
if $2 \leq l \leq k < m$, then
\[
J^1_l (\delta_{\oplus})^l_k  H^k_m(\bb{v}_1,\ldots, \bb{v}_m) = 0.
\]
Consequently, we have
\begin{equation} \label{eq:strict_pullback2}
\ti{\delta}^1_m(\bb{v}_1,\ldots, \bb{v}_m)
= J^1_1 \Bigl( 
\sum_{k = 1}^{m-1}(\delta_{\oplus})^1_k  H^k_m(\bb{v}_1,\ldots, \bb{v}_m) \Bigr) + 
\sum_{l = 1}^{m}J^1_l (\delta_{\oplus})^l_m  H^m_m(\bb{v}_1,\ldots, \bb{v}_m)
.
\end{equation}
Note that in order to show $\ti{\delta}^1_m(\bb{v}_1,\ldots, \bb{v}_m) \in \bs \ti{L}$, it suffices to verify that 
\[
\pr \ti{\delta}^1_m(\bb{v}_1,\ldots, \bb{v}_m) \in \ker F_1, 
\]
since $\deg{\bb{v}_1\ + \ldots + \bb{v}_m} > 2$.
Let us focus on the first summation on the right hand side of Eq.\ \ref{eq:strict_pullback2}.
It follows from the definition of $\delta_\oplus$ that we have $(\delta_{\oplus})^1_k  H^k_m = \bigl( \delta'^1_{k} \pr'^{\tensor k}H^k_m, \delta^1_k\pr^{\tensor k}H^k_m \bigr)$. Since $k <m$, we have $\pr'^{\tensor k}H^k_m =0$, and  hence
\[
\pr J^1_1 \Bigl(\sum_{k = 1}^{m-1}(\delta_{\oplus})^1_k  H^k_m(\bb{v}_1,\ldots, \bb{v}_m) \Bigr)
= \sum^{m-1}_{ k = 1} \delta^1_k\pr^{\tensor k}H^k_m(\bb{v}_1,\ldots, \bb{v}_m).
\]
Since $F$ corresponds to a strict $L_\infty$-morphism, applying $F_1$ to the right hand side of the above equality gives
\begin{equation} \label{eq:strict_pullback2.5}
\sum^{m-1}_{ k = 1} F_1\delta^1_k\pr^{\tensor k}H^k_m(\bb{v}_1,\ldots, \bb{v}_m) = 
\sum^{m-1}_{ k = 1} \delta''^1_k(F_1\pr)^{\tensor k}H^k_m(\bb{v}_1,\ldots, \bb{v}_m).
\end{equation}
The expansion of $(F_1\pr)^{\tensor k} H^k_m$ using Eq.\ \ref{eq:morph_formula2} involves sums of the following form
\begin{equation*} 
\sum_{j_1 + j_2 + \cdots + j_k =m}(F_1 \pr) H^1_{j_1} 
\tensor (F_1 \pr) H^1_{j_2} \tensor \cdots  \tensor (F_1 \pr) H^1_{j_k}. 
\end{equation*}
If $j_r > 1$ and $\bb{w}_1, \ldots, \bb{w}_{j_r} \in \{ \bb{v}_1,\ldots, \bb{v}_m \}
\sse \bs \ti{L}$, then
\begin{equation*} 
(F_1\pr)H^1_{j_r}(\bb{w}_1, \ldots, \bb{w}_{j_r}) = F_1 \sigma G^1_{j_r}(w'_1, \ldots, w'_{j_r})
=G^1_{j_r}(w'_1, \ldots, w'_{j_r})
\end{equation*}
where the last equality above follows from the fact that $\deg{\bb{w}_1, \ldots, \bb{w}_{j_r}} > 1$.
For the $j_r = 1$ case, if $\bb{v}_i \in \bs \ti{L}_1$, then the definition of $H^1_1$ implies that 
\begin{equation*} 
F_1\pr H^1_1(\bb{v}_i) = F_1( \sigma G_1(v'_i) + v_i) = F_1(v_i) = G_1(v_i). 
\end{equation*}
And if $\bb{v}_i \in \bs \ti{L}_{j \geq 2}$, then $v_i \in \ker F$, which implies that 
\begin{equation} \label{eq:strict_pullback6}
F_1\pr H^1_1(\bb{v}_i) = F_1( \sigma G_1(v'_i)) = G_1(v'_i).
\end{equation}
Therefore, by combining Eqs.\ \ref{eq:strict_pullback2.5} -- \ref{eq:strict_pullback6}, we conclude that
\begin{equation} \label{eq:strict_pullback7}
F_1 \pr J^1_1 \Bigl(\sum_{k = 1}^{m-1}(\delta_{\oplus})^1_k  H^k_m(\bb{v}_1,\ldots, \bb{v}_m) \Bigr)
= \sum_{k = 1}^{m-1} \delta''^1_k G^k_m(v'_1,\ldots, v'_m).
\end{equation}

Now consider the second summation on the right hand side of Eq.\ \ref{eq:strict_pullback2}. 
Equation \ref{eq:coder_formula2} and the definitions of $\delta_\oplus$ and $J^1_{l \geq 2}$ imply that 
\[
\pr \sum^m_{l=2} J^1_l(\delta_{\oplus})^l_m  H^m_m = - \sum^m_{l=2} \sigma G^1_l \delta'^{l}_m \pr'^{\tensor m}H^m_m. 
\]
Hence, 
\[
\pr \sum^m_{l=2} J^1_l(\delta_{\oplus})^l_m  H^m_m (\bb{v}_1,\ldots, \bb{v}_m) = 
- \sum^m_{l=2} \sigma G^1_l \delta'^{l}_m(v'_1,\ldots,v'_m)
\]
Furthermore, it follows directly from the definition of $J^1_1$ that
\[
\begin{split}
\pr J^1_1(\delta_{\oplus})^1_m  H^m_m (\bb{v}_1,\ldots, \bb{v}_m)& =  \Bigl(-\sigma G_1\delta'^1_m \pr'^{\tensor m} H^m_m + \delta^1_m \pr^{\tensor m}H^m_m \Bigr) (\bb{v}_1,\ldots, \bb{v}_m)\\
&= -\sigma G^1_m \delta'^1_m(v'_1,\ldots,v'_m) + \delta^1_m \bigl( \sigma G_1(v'_1) + v_1, \ldots,  
\sigma G_1(v'_m) + v_m \bigr).
\end{split}
\]
We apply $F_1$, and by using the above equalities, we obtain the following:
\begin{equation} \label{eq:strict_pullback8}
\begin{split} 
F_1 \pr \Bigl(\sum_{l = 1}^{m}J^1_l (\delta_{\oplus})^l_m  H^m_m(\bb{v}_1,\ldots, \bb{v}_m) \Bigr)
&= - \sum^m_{l=2} F_1\sigma G^1_l \delta'^{l}_m(v'_1,\ldots,v'_m) -F_1\sigma G^1_1 \delta'^1_m(v'_1,\ldots,v'_m)\\
& \quad + F_1 \delta^1_m \bigl( \sigma G_1(v'_1) + v_1, \ldots, \sigma G_1(v'_m) + v_m \bigr)\\
 = - \sum^m_{l=2} G^1_l \delta'^{l}_m(v'_1,\ldots,v'_m)& - G^1_1 \delta'^1_m(v'_1,\ldots,v'_m)\\
& + \delta''^1_m (F_1)^{\tensor m} \bigl( \sigma G_1(v'_1) + v_1, \ldots, G_1(v'_m) + v_m \bigr)\\
= - \sum^m_{l=1} G^1_l \delta'^{l}_m(v'_1,\ldots,v'_m)& + \delta''^1_m G^m_m (v'_1,\ldots,v'_m)
\end{split}
\end{equation}
To obtain the last two lines above, we used the fact that $F$ is a strict $L_\infty$-morphism, as well as the definition of $\sigma$, and the fact that either $v_i \in \ker F_1$ or $G_1(v'_i)=F_1(v_i)$ for all $i=1,\ldots,m$. Finally, we combine Eq.\ \ref{eq:strict_pullback7} with Eq.\ \ref{eq:strict_pullback8} to obtain
\[
\begin{split}
F_1\pr \ti{\delta}^1_m(v'_1,\ldots, v'_m) = 
\sum_{k = 1}^{m} \delta''^1_k G^k_m(\bb{v}_1,\ldots, \bb{v}_m) 
- \sum^m_{l=1} G^1_l \delta'^{l}_m(v'_1,\ldots,v'_m). 
\end{split}
\]
Therefore, since $G$ is a dg coalgebra morphism, we conclude that $F_1\pr \ti{\delta}^1_m(v'_1,\ldots, v'_m) = 0$. 
This completes the verification of the claim. 
\end{proof}

\subsubsection*{The proof of Proposition \ref{prop:strict_pullback}}

\begin{proof}[Proof of statement (1)] 
Claims \ref{claim:1} and \ref{claim:2} above imply that 
$(\S(\bs \ti{L}), \ti{\delta})$ is a dg coalgebra, and that $H \maps (\S(\bs \ti{L}), \ti{\delta}) \to (\S(\bs L' \oplus \bs L), \delta_\oplus)$  is a dg-coalgebra morphism. 
We wish to show that the following diagram in $\LnA{n} \sse \dgcocom$ commutes:
\begin{equation} \label{diag:strict_pullback3}
\begin{tikzpicture}[descr/.style={fill=white,inner sep=2.5pt},baseline=(current  bounding  box.center)]
\matrix (m) [matrix of math nodes, row sep=2em,column sep=3em,
  ampersand replacement=\&]
  {  
(\S(\bs \ti{L}),\ti{\delta})  \& \bigl ( \S(\bs L), \delta  \bigr) \\
\bigl ( \S(\bs L'), \delta' \bigr) \& \bigl ( \S(\bs L''), \delta'' \bigr) \\
}; 
  \path[->,font=\scriptsize] 
   (m-1-1) edge node[auto] {$\ppr H$} (m-1-2)
   (m-1-1) edge node[auto,swap] {$\ppr'H$} (m-2-1)
   (m-1-2) edge node[auto] {$F$} (m-2-2)
   (m-2-1) edge node[auto] {$G$} (m-2-2)
  ;

\end{tikzpicture}
\end{equation}
It suffices to show that for all $m \geq 1$, the linear maps
\[
(G \ppr' H)^1_m \maps \S^m(\bs \ti{L}) \to \bs L'', \quad
(F \ppr H)^1_m \maps \S^m(\bs \ti{L}) \to \bs L''
\]
are equal. It follows from Eq.\ \ref{eq:comp} that 
\begin{equation} \label{eq:new:pb1}
\begin{split}
(G \ppr' H)^1_m &= \sum_{k=1}^m \sum_{i=1}^k G^1_i \ppr'^i_k H^k_m = \sum_{k=1}^m G^1_k \pr'^{\tensor k} H^k_m\\
(F \ppr H)^1_m &= \sum_{k=1}^m \sum_{i=1}^k F^1_i \ppr^i_k H^k_m = \sum_{k=1}^m F^1_k \pr^{\tensor k} H^k_m
=F^1_1 \pr H^1_m,
\end{split}
\end{equation}
Note that the last equality above follows from the hypothesis that $F$ is strict. 

We first consider the $m=1$ case. Let $\bb{v} \in \bs \ti{L}$. From the definition \eqref{eq:H} of $H$, we have
\[
F^1_1 \pr H^1_1(\bb{v}) = F^1_1( \si G_1(v') + v).
\]
Hence, the commutativity of diagram \eqref{diag:strict_pullback2} implies that 
$F^1_1 \pr H^1_1(\bb{v}) = G^1_1 \pr' H^1_1(\bb{v})$.

Now suppose $m \geq 2$. From Eq.\ \ref{eq:morph_formula2}, we see that the formula for
$H^k_m$ involves a summation of tensor products of linear maps of the form
\[
\sum_{i_1 + i_2 + \cdots + i_k =m}H^1_{i_1} \tensor H^1_{i_2} \tensor \cdots  \tensor H^1_{i_k}. 
\]
Hence, if $k < m$, then in each term of above sum, there exists a $i_r > 1$, 
and then it follows from the definition of $H$ that $\pr'H^1_{i_r}=0$. 
So we deduce that 
\[
(G \ppr' H)^1_m = G^1_m \pr'^{\tensor m} H^m_m.
\]
Therefore, for any $\bb{v}_1,\ldots, \bb{v}_m \in \bs \ti{L}$, we obtain the following 
equalities:
\[
\begin{split}
(G \ppr' H)^1_m(\bb{v}_1,\ldots, \bb{v}_m) &= 
G^1_m \pr'^{\tensor m} H^m_m(\bb{v}_1,\ldots, \bb{v}_m)\\
&= G^1_m \bigl( \pr'H^1_1(\bb{v}_1) ,\pr'H^1_1(\bb{v}_2), \ldots, \pr'H^1_1(\bb{v}_m) \bigr)\\
&=G^1_m(v'_1,v'_2,\ldots,v'_m).
\end{split}
\]
On the other hand, it follows from  Eq.\ \ref{eq:new:pb1} and the definition of $H$ that
\[
\begin{split}
(F \ppr H)^1_m (\bb{v}_1,\bb{v}_2,\ldots,\bb{v}_m) & =  F^1_1 \pr H^1_m(\bb{v}_1,\bb{v}_2,\ldots,\bb{v}_m)\\
&=F^1_1 \sigma G^1_m(v'_1,v'_2,\ldots,v'_m)\\
&=G^1_m(v'_1,v'_2,\ldots,v'_m).\\
\end{split}
\]
Hence, we conclude that \eqref{diag:strict_pullback3} indeed commutes, and by construction,
it lifts the pullback square in $\Chain$  
to the category $\LnA{n}$.
\end{proof}

\begin{proof}[Proof of statement (2)] 
Let $(\bar{P},\delta_P)$ denote the pullback of diagram \eqref{diag:main_pback} in $\dgcocom$ 
with $F \maps \bigl ( \S(\bs L), \delta  \bigr) \to \bigl ( \S(\bs L''), \delta''  \bigr)$ corresponding to a strict fibration in $\LnA{n}$.

Recall from \eqref{eq:coalg_pullback} that $\bar{P}$ is a subcoalgebra of $\S(\bs L' \oplus \bs L)$.
Let 
\[
J \vert_{\bar{P}} \maps \bar{P} \to \S(\bs L' \oplus \bs L)
\]
denote the restriction of the coalgebra morphism $J$ defined in Eq.\ \ref{eq:J}. We claim that $\im J \vert_{\bar{P}} \sse \S(\bs \ti{L})$. Since $\S(\bs L' \oplus \bs L)$ is cofree in $\cocom$, it suffices to check that the image of the linear map $J^1 \vert_P :=  J \vert_P \maps \bar{P} \to \bs L' \oplus \bs L$ is contained in $\bs \ti{L}$. 

Let $\bb{y} \in \bar{P}$ be an element of homogeneous degree. Write $\bb{y}$ as sum of elements of increasing word length, i.e., $\bb{y} = \bb{y}_1 + \bb{y}_2 + \cdots + \bb{y}_r$, with 
$\bb{y}_i \in \S^{n_i}(\bs L' \oplus \bs L)$ and $ 1 = n_1 < n_2 < \cdots < n_r$.   
Suppose $n_r=1$. Then $\bb{y}=\bb{y}_1=(v',v) \in \bs L' \oplus \bs L$, and  $\bb{y} \in \ker (F\ppr - G \ppr')$ implies that $(v',v) \in \bs \ti{L}$. Therefore  $J(\bb{y}) = (v', -\sigma G_1(v') + v) \in \bs \ti{L}$.

For the higher arity case, suppose $n_r >1$. The equality $F \ppr (\bb{y}) = G \ppr'(\bb{y})$ implies that $F^1 \ppr (\bb{y}) = G^1 \ppr'(\bb{y})$. Since $F$ is a strict $L_\infty$-morphism, $F^1_{m \geq 2}=0$. Therefore, we deduce that
\begin{equation} \label{eq:strict_pullback9}
F_1 \pr (\bb{y}_1) = \sum_{i=1}^r G^1_{n_i} \pr'^{\tensor n_i} (\bb{y}_i).
\end{equation}
Note that $\deg{\bb{y}} >1$, since $n_r >1$. Hence, it suffices to show that $\pr J^1(\bb{y}) \in \ker F_1$. It follows directly from the definition of $J$ that
\[
\pr J^1(\bb{y}) = \sum_{i=1}^r \pr J^1_{n_i}(\bb{y}_i) = \pr \bb{y}_1 - \sum^r_{i=1} \sigma G^1_{n_i} \pr'^{\tensor n_i}(\bb{y}_i).   
\]  
Applying $F_1$ to the above gives
\[
F_1\pr J^1(\bb{y}) = F_1 \pr \bb{y}_1 - \sum^r_{i=1} G^1_{n_i} \pr'^{\tensor n_i}(\bb{y}_i).
\]
It then follows from  Eq.\ \ref{eq:strict_pullback9} that $F_1\pr J^1(\bb{y})=0$.

Thus, $J \vert_{\bar{P}} \maps \bar{P} \to \S(\bs \ti{L})$ is a well-defined coalgebra morphism. Since $\delta_P = \delta_{\oplus} \vert_P$ and $HJ=\id$, the morphism $J \vert_P$ is compatible with the differentials. Furthermore, the following diagram in $\dgcocom$ commutes:
\[
\begin{tikzpicture}[descr/.style={fill=white,inner sep=2.5pt},baseline=(current  bounding  box.center)]
\matrix (m) [matrix of math nodes, row sep=2em,column sep=2em,
  ampersand replacement=\&]
  {  
(\bar{P}, \delta_P) \& [-1 cm] \& \\ [-0.5cm]
\& (\S(\bs \ti{L}),\ti{\delta})  \& \bigl ( \S(\bs L), \delta  \bigr) \\
\& \bigl ( \S(\bs L'), \delta' \bigr) \& \bigl ( \S(\bs L''), \delta'' \bigr) \\
}; 
  \path[->,font=\scriptsize] 
  
  (m-1-1) edge node[auto] {$J$} (m-2-2)
  (m-2-2) edge node[auto] {$\ppr H$} (m-2-3)
  (m-2-2) edge node[auto,swap] {$\ppr'H$} (m-3-2)
  (m-2-3) edge node[auto] {$F$} (m-3-3)
  (m-3-2) edge node[auto] {$G$} (m-3-3)
  (m-1-1) edge [bend left=20]  node[auto] {$\Pr$} (m-2-3)
  (m-1-1) edge [bend right=40]  node[auto,swap] {$\Pr'$} (m-3-2)
  ;

\end{tikzpicture}
\]

To see that $J\vert_{\bar{P}}$ is an isomophism, note that the commutative diagram \eqref{diag:strict_pullback3} implies that the sub-coalgebra $H(\S(\bs \ti{L})) \sse
\S(\bs L' \oplus \bs L)$ is contained in the vector space $\ker (F\ppr - G \ppr')$. Therefore,
the universal property of the coalgebra $\bar{P}$ implies that $H(\S(\bs \ti{L})) \sse \bar{P}$, 
and it then follows by Claim \ref{claim:1} that $H \vert_{\S(\bs \ti{L})}$ is the inverse of $J\vert_{\bar{P}}$. 

Hence, we conclude that $(\S(\bs \ti{L}), \ti{\delta})$ is a pullback in $\dgcocom$ and therefore a pullback in $\LnA{n}$. This completes the proof of statement (2) of the proposition.
\end{proof}



\subsection{Pullbacks of fibrations and acyclic fibrations} \label{sec:fib_pback}
\begin{cor} \label{cor:fib_pback}
Suppose $f \maps (L,\el) \to (L'',\el'')$ is a (acyclic) fibration in $\LnA{n}$ 
and $g \maps (L',\el) \to (L'',\el'')$ is an arbitrary morphism between
Lie $n$-algebras. Then the pullback of the diagram
\[
(L',\el'_k) \xto {g} (L'',\el''_k) \xleftarrow{f} (L,\el_k)
\]
exists in $\LnA{n}$, and the morphism induced by the pullback of $f$ along $g$ is a (acyclic) 
fibration.
\end{cor}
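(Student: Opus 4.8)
The plan is to reduce the general statement to the case of a strict fibration, which is handled by Prop.~\ref{prop:strict_pullback}, by means of the strictification of Lemma~\ref{lem:strict_fib}, and then to transport the conclusion along the resulting isomorphism.

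First I would apply Lemma~\ref{lem:strict_fib} to the fibration $f$ to obtain an isomorphism $\phi \maps (L,\ti{\el}) \xto{\cong} (L,\el)$ for which $f\phi \maps (L,\ti{\el}) \to (L'',\el'')$ is a \emph{strict} fibration with $(f\phi)_1 = f_1$. The point is that $\phi$ is an isomorphism of cospans over $(L'',\el'')$: since $f \circ \phi = f\phi$, the triple $(\id_{L'},\id_{L''},\phi)$ is an isomorphism between the diagrams
\[
(L',\el') \xto{g} (L'',\el'') \xleftarrow{f} (L,\el)
\quad\text{and}\quad
(L',\el') \xto{g} (L'',\el'') \xleftarrow{f\phi} (L,\ti{\el}).
\]
As a limit is invariant under isomorphism of diagrams, it therefore suffices to construct the pullback of the strict fibration $f\phi$ along $g$; the pullback of $f$ along $g$ will then exist in $\LnA{n}$ and be canonically isomorphic to it, via an isomorphism $\Psi$ commuting with the projections onto $(L',\el')$.

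Next I would invoke Prop.~\ref{prop:strict_pullback} for $f\phi$ and $g$. By part~(1) of that proposition, the resulting pullback square in $\LnA{n}$ lifts the chain-level pullback square built from $f_1$ and $g_1$; in particular, the linear part of its projection onto $(L',\el')$ is precisely the pullback of $f_1$ along $g_1$ in $\Chain^{\proj}$. Since $\Chain^{\proj}$ is a model category, fibrations are stable under pullback, so this chain map is surjective in all positive degrees; by Def.~\ref{def:LnA_morphs} and the tangent-functor characterization following Eq.~\eqref{eq:tan}, this projection is a fibration in $\LnA{n}$. Transporting along $\Psi$—which is an isomorphism, and fibrations remain fibrations after pre-composition with an isomorphism—shows that the pullback of $f$ along $g$ is again a fibration.

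Finally, for the acyclic case I would observe that if $f$ is moreover a weak equivalence, then $f_1 = (f\phi)_1$ is a quasi-isomorphism, so $f\phi$ is an acyclic strict fibration; the pullback of $f_1$ along $g_1$ is then the pullback of an acyclic fibration in $\Chain^{\proj}$, hence again an acyclic fibration by the same model-category stability, and the identical transport argument applies. The substantive work is already contained in Prop.~\ref{prop:strict_pullback}, so I expect the only delicate point here to be the bookkeeping verifying that the cospan isomorphism produced by strictification genuinely identifies the two pullbacks and preserves fibrations and weak equivalences.
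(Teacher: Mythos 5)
Your proof is correct, and its skeleton coincides with the paper's: strictify $f$ via Lemma \ref{lem:strict_fib}, pull back the strict fibration $f\phi$ along $g$ via Prop.\ \ref{prop:strict_pullback}, and read off the (acyclic) fibration property at the chain level through the tangent functor, using stability of (acyclic) fibrations under pullback in $\Chain^{\proj}$. Where you genuinely diverge is in how the pullback of the \emph{original} cospan is obtained from the strictified one. You argue purely inside $\LnA{n}$: the triple $(\id_{L'},\id_{L''},\phi)$ is an isomorphism of cospans, limits are invariant under isomorphism of diagrams, so the pullback of $f\phi$ along $g$ serves --- after composing its projection to $L$ with $\phi$, and with the projection to $(L',\el')$ untouched --- as the pullback of $f$ along $g$; the fibration property then transports for free. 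The paper instead passes to the ambient complete category $\dgcocom$: it forms the coalgebra pullback $(\bar{C},\delta_C)$ of $F$ along $G$ there, uses the pasting lemma for pullbacks together with the second statement of Prop.\ \ref{prop:strict_pullback} to produce an isomorphism $\widetilde{\Phi} \maps (\S(\bs \ti{L}),\ti{\delta}) \to (\bar{C},\delta_C)$, and concludes that $\bar{C}$ is cofree, hence itself a Lie $n$-algebra. Your route is more elementary, avoiding both the completeness of $\dgcocom$ and the cofreeness argument; the paper's route buys a slightly stronger conclusion, namely that the pullback of the original diagram computed in $\dgcocom$ already lies in $\LnA{n}$ (so the inclusion $\LnA{n} \sse \dgcocom$ preserves these pullbacks), and it keeps the explicit coalgebra-level identification that is reused in Cor.\ \ref{cor:tan_pullback} and in the proof of Prop.\ \ref{prop:MC-pullbacks}. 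Both arguments are sound.
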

\begin{proof}
Suppose $f$ is a (acyclic) fibration. Lemma \ref{lem:strict_fib} implies that
there exists a Lie $n$-algebra $(L,\hat{\el})$ and an 
isomorphism $\phi \maps (L,\hat{\el}) \xto{\cong} (L,\el)$ such that
$f\phi \maps (L,\hat{\el}) \to (L'',\el'')$ is a strict (acyclic) fibration with  $f\phi=(f\phi)_1 = f_1$. It follows from Prop.\ \ref{prop:strict_pullback}, that there exists 
a pullback diagram in $\LnA{n}$ of the form
\begin{equation*} 
\begin{tikzpicture}[descr/.style={fill=white,inner sep=2.5pt},baseline=(current  bounding  box.center)]
\matrix (m) [matrix of math nodes, row sep=2em,column sep=1em,
  ampersand replacement=\&]
  {  
(\ti{L},\ti{\el})  \& (L,\hat{\el}) \\
(L', \el') \& ( L'',\el'') \\
}; 
  \path[->,font=\scriptsize] 
   (m-1-1) edge node[auto] {$$} (m-1-2)
   (m-1-1) edge node[auto,swap] {$\widetilde{f\phi}$} (m-2-1)
   (m-1-2) edge node[auto] {$f\phi$} (m-2-2)
   (m-2-1) edge node[auto] {$g$} (m-2-2)
  ;

  \begin{scope}[shift=($(m-1-1)!.4!(m-2-2)$)]
  \draw +(-0.25,0) -- +(0,0)  -- +(0,0.25);
  \end{scope}
\end{tikzpicture}
\end{equation*}
Moreover, Prop.\ \ref{prop:strict_pullback} implies that  the image of above diagram under the tangent functor \eqref{eq:tan} is the pullback diagram of  $(f\phi)_1$ along $g_1$ in $\Chain$. Hence,  $\tanch( \widetilde{f \phi})$ is a (acyclic) fibration in $\Chain^{\proj}$, and therefore  $\widetilde{f \phi}$ is a (acyclic) fibration in $\LnA{n}$.

Now let $F$, $G$, $\Phi$ denote the dg coalgebra morphisms corresponding to $f$, $g$, and $\phi$, respectively. Let $(\bar{C}, \delta_C)$ denote the dg coalgebra witnessing the pullback of $F$ along $G$. The second statement of Prop.\ \ref{prop:strict_pullback}, combined with the pasting lemma for pullbacks, implies that the following diagram in $\dgcocom$ commutes:
\[
\begin{tikzpicture}[descr/.style={fill=white,inner sep=2.5pt},baseline=(current  bounding  box.center)]
\matrix (m) [matrix of math nodes, row sep=2em,column sep=3em,
  ampersand replacement=\&]
  {  
(\S(\bs \tilde{L}),\ti{\delta}) \& (\S(\bs {L}), \hat{\delta}) \\
(\bar{C},\delta_C)  \& (\S(\bs L), \delta) \\
(\S(\bs L'),\delta') \& (\S(\bs L''), \delta'')\\
}; 
  \path[->,font=\scriptsize] 
   (m-1-1) edge node[auto] {$$} (m-1-2)
   (m-1-1) edge node[auto] {$\widetilde{\Phi}$} node[sloped,below] {$\cong$} (m-2-1)
   (m-1-2) edge node[auto] {$\Phi$} node[sloped,below] {$\cong$} (m-2-2)
   (m-2-1) edge node[auto] {$$} (m-2-2)
   (m-2-1) edge node[auto,swap] {$\widetilde{F}$} (m-3-1)
   (m-2-2) edge node[auto] {$F$} (m-3-2)
   (m-3-1) edge node[auto] {$G$} (m-3-2)
  ;

  \begin{scope}[shift=($(m-1-1)!.4!(m-2-2)$)]
  \draw +(-0.25,0) -- +(0,0)  -- +(0,0.25);
  \end{scope}

  \begin{scope}[shift=($(m-2-1)!.4!(m-3-2)$)]
  \draw +(-0.25,0) -- +(0,0)  -- +(0,0.25);
  \end{scope}
\end{tikzpicture}
\]
Since $\widetilde{\Phi}$ is an isomorphism, $\bar{C}$ is the cofree coalgebra in $\cocom$ 
cogenerated by $\widetilde{\Phi}(\bs \ti{L})$. Hence, $(\bar{C}, \delta_C)$ is also a Lie $n$-algebra and $\widetilde{F}$ is an (acyclic) fibration. 
\end{proof}

We end this section with the corollary below, whose proof follows immediately from the construction of the pullbacks in Prop.\ \ref{prop:strict_pullback} and Cor.\ \ref{cor:fib_pback}.

\begin{cor} \label{cor:tan_pullback}
Let $f \maps (L,\el) \to (L',\el')$ be a fibration in $\LnA{n}$. Then
the tangent functor \eqref{eq:tan} maps pullback squares in $\LnA{n}$ of the form
\[
\begin{tikzpicture}[descr/.style={fill=white,inner sep=2.5pt},baseline=(current  bounding  box.center)]
\matrix (m) [matrix of math nodes, row sep=2em,column sep=3em,
  ampersand replacement=\&]
  {  
(\ti{L},\ti{\el})  \& (L,\el) \\
(L',\el' ) \& ( L'',\el'') \\
}; 
  \path[->,font=\scriptsize] 
   (m-1-1) edge node[auto] {$$} (m-1-2)
   (m-1-1) edge node[auto,swap] {$$} (m-2-1)
   (m-1-2) edge node[auto] {$f$} (m-2-2)
   (m-2-1) edge node[auto] {$g$} (m-2-2)
  ;

  \begin{scope}[shift=($(m-1-1)!.4!(m-2-2)$)]
  \draw +(-0.25,0) -- +(0,0)  -- +(0,0.25);
  \end{scope}
\end{tikzpicture}
\]
to pullback squares in $\Chain$.
\end{cor}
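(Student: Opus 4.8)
The plan is to show that $\tanch$ carries the (essentially unique) pullback square of $f$ and $g$ in $\LnA{n}$ to the pullback of $f_1$ and $g_1$ in $\Chain$, by reducing to the single explicit square produced in the proof of Cor.\ \ref{cor:fib_pback} and observing that this square is literally built on top of the $\Chain$-pullback. First I would note that pullbacks are unique up to canonical isomorphism, and that $\tanch$ sends $L_\infty$-isomorphisms to isomorphisms in $\Chain$ (since $\tanch\phi = \phi_1$, and $\phi$ is an isomorphism precisely when $\phi_1$ is). Hence it suffices to verify the claim for one chosen pullback square of the cospan $(L,\el)\xto{f}(L'',\el'')\xleftarrow{g}(L',\el')$: any other such square differs from it by an isomorphism in $\LnA{n}$ commuting with the projections, and a commutative square that is isomorphic, compatibly with its legs, to a pullback square is itself a pullback square. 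The functor $\tanch$ transports this situation because it preserves both isomorphisms and commutativity.

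Next I would take as the chosen square the one constructed in Cor.\ \ref{cor:fib_pback}. That construction strictifies $f$ via Lemma \ref{lem:strict_fib}, producing an isomorphism $\phi\maps(L,\hat{\el})\xto{\cong}(L,\el)$ with $\phi_1=\id_L$ (because $\Phi^1_1=\id$) and with $f\phi$ a strict fibration satisfying $(f\phi)_1=f_1$; it then forms the pullback $(\ti{L},\ti{\el})$ of the strict fibration $f\phi$ along $g$ by Prop.\ \ref{prop:strict_pullback}. The heart of the argument is statement (1) of Prop.\ \ref{prop:strict_pullback}: by construction the underlying complex $(\ti{L},\ti{\el}_1)$ is literally the pullback in $\Chain$ of $(f\phi)_1=f_1$ along $g_1$, and the two structure projections of $(\ti{L},\ti{\el})$ restrict in degree-one arity to the canonical projections of that $\Chain$-pullback. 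Hence $\tanch$ applied to the strict pullback square returns, on the nose, the pullback square of $f_1$ and $g_1$ in $\Chain$.

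Finally I would transport from $f\phi$ back to $f$. The proof of Cor.\ \ref{cor:fib_pback} supplies an isomorphism $\widetilde{\Phi}$ identifying $(\S(\bs\ti{L}),\ti{\delta})$ with the dg-coalgebra pullback $(\bar{C},\delta_C)$ of $F$ along $G$, i.e.\ with the pullback of $f$ along $g$. Since $\phi_1=\id$, the linear part $\widetilde{\Phi}^1_1$ is an isomorphism, so $\tanch\widetilde{\Phi}$ is an isomorphism in $\Chain$ commuting with the projections to $(L,\el_1)$ and $(L',\el'_1)$. Combined with the previous paragraph, this shows that the image under $\tanch$ of the pullback of $f$ along $g$ is isomorphic, compatibly with its legs, to the $\Chain$-pullback of $f_1$ and $g_1$, and is therefore a pullback in $\Chain$.

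I expect no serious obstacle: the statement is essentially a repackaging of the explicit constructions already carried out, which is why it can be claimed to follow immediately. The only point requiring genuine care is the bookkeeping—verifying that every connecting isomorphism introduced during strictification and during the coalgebra identification acts as an isomorphism (indeed the identity) on the underlying degree-one data, so that the $\Chain$-pullback is preserved rather than merely mapped to an abstractly isomorphic complex with possibly different projection legs.
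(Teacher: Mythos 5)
Your proof is correct and takes essentially the same route as the paper: the paper's own proof consists of the single remark that the claim ``follows immediately from the construction of the pullbacks in Prop.\ \ref{prop:strict_pullback} and Cor.\ \ref{cor:fib_pback}'', and your argument simply spells out what that means --- strictification via Lemma \ref{lem:strict_fib} with $\phi_1=\id$, the fact that Prop.\ \ref{prop:strict_pullback}(1) builds the strict pullback literally on top of the $\Chain$-pullback of $f_1$ and $g_1$, and transport along the connecting isomorphism $\widetilde{\Phi}$ from Cor.\ \ref{cor:fib_pback}. Your preliminary reduction (uniqueness of pullbacks up to an isomorphism compatible with the legs, preserved by $\tanch$) is exactly the bookkeeping needed to pass from the one constructed square to arbitrary pullback squares, which the paper leaves implicit.
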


\section{$\lnaft$ as a category of fibrant objects} 
\label{sec:LnA_CFO}
Let $(L, \el)$ be a Lie $n$-algebra with underlying graded vector space $L=\bigoplus^{n-1}_{i \geq 0} L_i$.
If each $L_i$ is finite-dimensional, we say $(L,\el)$ is \textbf{finite type}. For a fixed $n \in \N \cup \{\infty\}$, we denote by $\LnA{n}^{\ft}$ the full subcategory of $\Linf$ whose objects are finite type Lie $n$-algebras.

\begin{definition}[Sec.\ 1 \cite{Brown:1973}] \label{def:cfo}
Let $\C$ be a category with finite products, with terminal object $\ast
\in \C$, and equipped with two classes of morphisms called
\textit{weak equivalences} and \textit{fibrations}. A morphism which
is both a weak equivalence and a fibration is called an
\textit{acyclic fibration}. Then $\C$ is a
{\bf category of fibrant objects (CFO)} for a homotopy theory iff:
\begin{enumerate}
\item{Every isomorphism in $\C$ is an acyclic fibration.}

\item{The class of weak equivalences satisfies  ``2 out of 3''. That is, if
    $f$ and $g$ are composable morphisms in $\C$ and any two of $f,g, g
    \circ f$ are weak equivalences, then so is the third.}

\item{The composition of two fibrations is a fibration.}

\item{The pullback of a fibration exists, and is a fibration.
That is, if $Y \xto{g} Z \xleftarrow{f} X$ is a diagram in $\C$ with $f$
    a fibration, then the pullback $X \times_{Z} Y$ exists, and
   the induced projection $X \times_{Z} Y \to Y$ is a  fibration.}

\item{The pullback of an acyclic fibration exists, and is an acyclic fibration.
 }

\item{For any object $X \in \C$ there exists a (not necessarily
    functorial) \textbf{path object}, that is, an object
    $X^{I}$ equipped with morphisms
\[
X \xto{s} X^{I} \xto{(d_0,d_1)} X \times X,
\]
such that $s$ is a weak equivalence, $(d_0,d_1)$ is a fibration, and their
composite is the diagonal map.}

\item{All objects of $\C$ are \textbf{fibrant}. That is, for any $X \in \C$ the unique map 
$ X \to \ast$ is a fibration.}
\end{enumerate}
\end{definition}

We now prove the main result of the paper.

\begin{theorem} \label{thm:LnA_CFO}
Let $n \in \N \cup \{\infty\}$.  The category $\LnA{n}^{\ft}$ of finite type Lie $n$-algebras 
and weak $L_\infty$-morphisms has the structure of a category of fibrant objects, in which the 
weak equivalences and fibrations are those morphisms that satisfy the defining criteria given in Def.\ \ref{def:LnA_morphs}. That is, a morphism $f \maps (L,\el) \to (L',\el')$ is
\begin{itemize}
\item a weak equivalence iff $f$ is a  $L_\infty$ quasi-isomorphism,
\item a fibration iff  the associated chain map $f_1 \maps (L,\el_1) \to (L',\el'_1)$ is a surjection in all {positive} degrees.
\end{itemize}
\end{theorem}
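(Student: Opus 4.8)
The plan is to verify the seven axioms of Def.\ \ref{def:cfo} one by one, leaning on the tangent functor $\tanch$ together with the structural results already established. First I would record that $\LnA{n}^{\ft}$ has finite products by the evident finite-type restriction of Prop.\ \ref{prop:finprod} (the direct sum $L \oplus L'$ is finite type whenever $L$ and $L'$ are), with terminal object the zero Lie $n$-algebra.

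Axioms (1), (2), (3), and (7) I would dispatch uniformly through $\tanch$. By Def.\ \ref{def:LnA_morphs} together with Sec.\ \ref{sec:chproj}, a morphism $f$ is a weak equivalence (resp.\ fibration) in $\LnA{n}^{\ft}$ if and only if $\tanch f = f_1$ is one in $\Chain^{\proj}$, and Eq.\ \ref{eq:comp} shows $\tanch$ is a functor since $(gf)_1 = g_1 f_1$. Thus: an $L_\infty$-isomorphism has $f_1$ an isomorphism of complexes, hence both a quasi-isomorphism and a degreewise surjection, giving (1); the ``2 out of 3'' property for weak equivalences in $\Chain^{\proj}$ transfers to $\LnA{n}^{\ft}$ because $\tanch$ both preserves and reflects weak equivalences, giving (2); composites of maps surjective in positive degrees are again such, giving (3); and the unique map $(L,\el) \to 0$ has $f_1$ trivially surjective in positive degrees, giving (7).

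For axioms (4) and (5) I would invoke Cor.\ \ref{cor:fib_pback} directly: it produces the pullback of a (acyclic) fibration along an arbitrary morphism and shows the induced projection is again a (acyclic) fibration. The only point needing attention is that the construction stays inside the finite-type subcategory, which is immediate since, by Prop.\ \ref{prop:strict_pullback}, the underlying complex of the pullback is the fiber product $\ti{L} = L' \times_{L''} L$ formed in $\Chain$, finite-dimensional in each degree whenever $L$, $L'$, $L''$ are.

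Finally, for the path-object axiom (6) I would apply Prop.\ \ref{prop:strict_factor} to the diagonal $\diag \maps (L',\el') \to (L' \oplus L', \el' \oplus \el')$, which is a strict morphism. This yields a factorization $(L',\el') \xto{\jmath} (L'^I, \el'^I) \xto{\phi} (L' \oplus L', \el' \oplus \el')$ with $\jmath$ a weak equivalence, $\phi$ a fibration, and $\phi\jmath = \diag$ by construction; setting $s := \jmath$ and $(d_0,d_1) := \phi$ gives the required path object. Here too the sole remaining check is preservation of finite type: the construction of Prop.\ \ref{prop:strict_factor} builds $L'^I = L' \oplus P(L' \oplus L')$, and the complex $P(W)$ of Eq.\ \ref{eq:P(W)} is finite type whenever $W$ is. I do not expect a genuine obstacle: the substantive work has already been carried out in Prop.\ \ref{prop:strict_factor}, Prop.\ \ref{prop:strict_pullback}, and Cor.\ \ref{cor:fib_pback}, so the remaining task is the bookkeeping of assembling the axioms and confirming that the relevant pullback and path-object constructions preserve finite type.
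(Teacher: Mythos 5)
Your proposal is correct and follows essentially the same route as the paper's proof: finite products via Prop.\ \ref{prop:finprod}, axioms (1)--(3) and (7) directly from the definitions (equivalently, through the tangent functor, using $(gf)_1 = g_1 f_1$), axioms (4)--(5) from Cor.\ \ref{cor:fib_pback} with the finite-type check coming from the fact that the underlying complex of the pullback is the fiber product in $\Chain$, and axiom (6) by factoring the diagonal via Prop.\ \ref{prop:strict_factor} and noting $P(L\oplus L)$ is finite type. The bookkeeping you describe is exactly what the paper carries out.
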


\begin{proof}
We begin by noting that Prop.\ \ref{prop:finprod} implies that $\lnaft$ has finite products.
Next, it follows immediately from the definition of weak equivalences and fibrations that: every isomorphism is an acyclic fibration, the weak equivalences satisfy ``2 out of 3'', the composition of two fibrations is again a fibration, and that 
the trivial map $(L,\el) \to 0$ is a fibration for any $(L,\el) \in \lnaft$. Hence
axioms 1,2,3, and 7 in Def. \ref{def:cfo} for a CFO are satisfied.

To verify axiom 4, suppose $(L',\el') \xto {g} (L'',\el'') \xleftarrow{f} (L,\el)$ is a diagram in $\lnaft$ and $f$ is a fibration. It follows from Cor.\ \ref{cor:fib_pback} that the pullback $(\ti{L},\ti{\el})$ of the diagram exists in $\LnA{n}$ and that the morphism induced by the pullback is a fibration. Cor.\ \ref{cor:fib_pback} also implies that the underlying complex of $(\ti{L},\ti{\el})$ is the pullback of the diagram $\tanch \Bigl ( (L',\el') \xto {g} (L'',\el'') \xleftarrow{f} (L,\el) \Bigr)$ in $\Chain$. Hence, $(\ti{L},\ti{\el})$ is clearly finite type, and axiom 4 is satisfied. The same argument also verifies axiom 5.

Finally, recall that any diagonal map $\diag \maps (L,\el) \to (L \oplus L, \el \oplus \el)$ is a strict morphism in $\lnaft$. Hence,  Prop.\ \ref{prop:strict_factor} implies that $\diag$ has a factorization $(L, \el) \xto{\jmath}  (\ti{L}, \ti{\el}) \xto{\phi} (L \oplus L, \el \oplus \el)$
in the category $\LnA{n}$, in which $\jmath$ is a weak equivalence and $\phi$ is a fibration.
Recall from the proof of Prop.\ \ref{prop:strict_factor} that $\ti{L}=\bigl ( L \oplus P(L\oplus L) \bigr)$, where $P(L\oplus L)$ is the graded vector space defined in \eqref{eq:P(W)}. Therefore, $\ti{L}$ is finite type, and $(\ti{L}, \ti{\el})$ is a path object for $(L,\el)$ in $\lnaft$. Hence, 
axiom 6 in Def. \ref{def:cfo} is satisfied, and this completes the proof.
\end{proof}

\begin{remark} \label{rmk:inf_dim}
Clearly, the same proof shows that the category $\LnA{n}$ is also a category of fibrant objects with 
the same weak equivalences, fibrations, and path objects as $\lnaft$.
\end{remark}

We can now complete the discussion that we started in Sec.\ \ref{sec:LnA_fact} concerning the factorization of arbitrary weak $L_\infty$-morphisms in $\lnaft$. 
Let us recall Brown's Factorization Lemma:
\begin{lemma}[Sec.\ 1 \cite{Brown:1973}] \label{lem:fact} 
Let $\C$ be a category of fibrant objects. Let $f\maps X\to Y$ be a morphism in $\C$, and let $Y^I$ be a path object for $Y$. Then $f$ can be factored as 
\[
X \xto{\jmath} X \times_Y Y^I \xto{\phi_f} Y
\]
where $\phi_f$ is a fibration, and $\jmath$ is a 
weak equivalence which is a section (right inverse) of an acyclic fibration. 
\end{lemma}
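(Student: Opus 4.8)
The plan is to build the factorization explicitly, following Brown's original argument. Write the structure maps of the path object as $Y \xto{s} Y^I \xto{(d_0,d_1)} Y \times Y$, so that $d_0 s = d_1 s = \id_Y$. I would first record two preliminary observations about fibrations. Since every object of $\C$ is fibrant (axiom 7), the map $Y \to \ast$ is a fibration, and hence both projections $Y \times Y \to Y$ and $X \times Y \to Y$ are fibrations, being pullbacks of such maps (axiom 4). Consequently the endpoint map $d_0 = \pr_1 \circ (d_0,d_1) \maps Y^I \to Y$ is a composite of fibrations, hence a fibration (axiom 3); and since $d_0 s = \id_Y$ with $s$ a weak equivalence, the ``2 out of 3'' axiom forces $d_0$ to be a weak equivalence. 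Thus $d_0$ is an \emph{acyclic} fibration.

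Next I would form $X \times_Y Y^I$ as the pullback of $f$ along the acyclic fibration $d_0$, which exists by axiom 5; write its two legs as $\pr_X \maps X \times_Y Y^I \to X$ and $q \maps X \times_Y Y^I \to Y^I$, so that $d_0 q = f \pr_X$. Being a pullback of an acyclic fibration, $\pr_X$ is itself an acyclic fibration. The map $\jmath$ is then defined by the universal property of the pullback applied to the pair $(\id_X, s f)$, which is compatible since $d_0 (s f) = (d_0 s) f = f$; by construction $\pr_X \jmath = \id_X$, so $\jmath$ is a section of the acyclic fibration $\pr_X$, and ``2 out of 3'' shows $\jmath$ is a weak equivalence. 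Setting $\phi_f := d_1 q$, one checks at once that $\phi_f \jmath = d_1 q \jmath = d_1 (s f) = (d_1 s) f = f$, giving the factorization $f = \phi_f \jmath$.

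The one substantive point, which I expect to be the main obstacle, is showing that $\phi_f = d_1 q$ is a fibration. The idea is to factor it as $\phi_f = \pr_Y \circ \psi$, where $\pr_Y \maps X \times Y \to Y$ is the (fibration) projection and $\psi := (\pr_X, d_1 q) \maps X \times_Y Y^I \to X \times Y$; it then suffices to prove $\psi$ is a fibration. I would do this by exhibiting $\psi$ as a pullback of the fibration $(d_0,d_1)$ along $f \times \id_Y$, i.e.\ by showing that the square
\[
\begin{tikzpicture}[descr/.style={fill=white,inner sep=2.5pt},baseline=(current  bounding  box.center)]
\matrix (m) [matrix of math nodes, row sep=2em,column sep=3em,
  ampersand replacement=\&]
  {
X \times_Y Y^I \& Y^I \\
X \times Y \& Y \times Y \\
};
  \path[->,font=\scriptsize]
   (m-1-1) edge node[auto] {$q$} (m-1-2)
   (m-1-1) edge node[auto,swap] {$\psi$} (m-2-1)
   (m-1-2) edge node[auto] {$(d_0,d_1)$} (m-2-2)
   (m-2-1) edge node[auto] {$f \times \id_Y$} (m-2-2)
  ;
\end{tikzpicture}
\]
commutes and satisfies the pullback universal property. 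Commutativity reduces to the relation $d_0 q = f \pr_X$ already built into $X \times_Y Y^I$ (the second components agree trivially). For the universal property, a test cone $a \maps T \to Y^I$, $(b_1,b_2) \maps T \to X \times Y$ with $(d_0,d_1) a = (f \times \id_Y)(b_1,b_2)$ encodes precisely the relations $d_0 a = f b_1$ and $d_1 a = b_2$; feeding the compatible pair $(b_1, a)$ into the universal property of $X \times_Y Y^I$ yields the unique mediating map $T \to X \times_Y Y^I$, and a short check shows $q$ and $\psi$ recover $a$ and $(b_1,b_2)$. Hence $\psi$ is the pullback of $(d_0,d_1)$ along $f \times \id_Y$, so $\psi$ is a fibration by axiom 4 and $\phi_f = \pr_Y \circ \psi$ is a composite of fibrations (axiom 3). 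The only thing to keep an eye on is the existence of the pullbacks involved, but this is automatic since at every stage one pulls back along a (acyclic) fibration, so axioms 4 and 5 apply.
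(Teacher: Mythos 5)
Your proof is correct, and it is precisely the classical argument of Brown that the paper invokes by citation (the paper itself gives no proof, deferring to Sec.\ 1 of \cite{Brown:1973} and Sec.\ 2.1 of \cite{Rogers-Zhu:2018}): pull back the acyclic fibration $d_0$ along $f$, define $\jmath$ from the section $sf$, and exhibit $(\pr_X, d_1 q)$ as a pullback of the fibration $(d_0,d_1)$ along $f \times \id_Y$ so that $\phi_f = d_1 q$ is a composite of fibrations. The only implicit step is that a map canonically isomorphic to the pullback projection guaranteed by axiom 4 is itself a fibration (via axioms 1 and 3), which is standard and does not constitute a gap.
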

The morphisms $\jmath$ and $\phi_f$ in the lemma can easily be expressed in terms of the maps  
$Y \xto{s} Y^{I} \xto{(d_0,d_1)} Y \times Y$ that appear in factorization of the diagonal. See, for example, Sec.\ 2.1 of \cite{Rogers-Zhu:2018} for details. 

Hence, Thm.\ \ref{thm:LnA_CFO} and the factorization lemma imply the following:
\begin{cor}\label{cor:factor}
Let $f \maps (L,\el) \to (L',\el')$ be a weak $L_\infty$-morphism between finite type Lie $n$-algebras. Then $f$ can be factored in the category $\lnaft$ as
\[
(L,\el) \xto{\jmath} (\ti{L},\ti{\el}) \xto{p_f} (L',\el')
\] 
where $\jmath$ is a weak equivalence, and $p_f$ is a fibration in $\LnA{n}$.
\end{cor}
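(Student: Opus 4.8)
The plan is to deduce this corollary directly from the main theorem together with Brown's Factorization Lemma, since all the substantive work has already been done. By Thm.~\ref{thm:LnA_CFO}, the category $\lnaft$ is a category of fibrant objects with the stated weak equivalences and fibrations. In particular, every object admits a path object: the proof of that theorem shows that, for $(L',\el')$, the factorization of the diagonal $\diag \maps (L',\el') \to (L' \oplus L', \el' \oplus \el')$ furnished by Prop.~\ref{prop:strict_factor} produces a path object $(L^{\prime I}, \el^{\prime I})$ lying in $\lnaft$, namely one whose underlying graded vector space is $L' \oplus P(L' \oplus L')$, which is finite type.

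With a path object in hand, I would simply invoke Lemma~\ref{lem:fact}. Applying it to $f \maps (L,\el) \to (L',\el')$ and the path object $(L^{\prime I},\el^{\prime I})$ yields a factorization
\[
(L,\el) \xto{\jmath} (L,\el) \times_{(L',\el')} (L^{\prime I}, \el^{\prime I}) \xto{\phi_f} (L',\el'),
\]
in which $\phi_f$ is a fibration and $\jmath$ is a weak equivalence (in fact a section of an acyclic fibration). Setting $(\ti{L}, \ti{\el}) := (L,\el) \times_{(L',\el')} (L^{\prime I},\el^{\prime I})$ and $p_f := \phi_f$ then gives precisely the asserted factorization.

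The one point requiring a moment's care is that the pullback defining $(\ti{L},\ti{\el})$ actually exists in $\lnaft$, and not merely in $\LnA{n}$. This is exactly axiom 4 of the CFO structure, verified in the proof of Thm.~\ref{thm:LnA_CFO} via Cor.~\ref{cor:fib_pback}: the relevant leg $(L^{\prime I},\el^{\prime I}) \to (L',\el')$ (the composite of the fibration $(d_0,d_1)$ with a projection $L' \oplus L' \to L'$) is itself a fibration, so the pullback exists in $\LnA{n}$, and Cor.~\ref{cor:fib_pback} identifies its underlying complex with the pullback in $\Chain$ of finite-type complexes, which is again finite type. Hence $(\ti{L},\ti{\el}) \in \lnaft$.

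In short, I expect no genuine obstacle: the corollary is a formal consequence of the CFO structure established in Thm.~\ref{thm:LnA_CFO}. The only substantive inputs beyond the abstract factorization lemma are the two observations, already recorded in the proofs of Thm.~\ref{thm:LnA_CFO} and Cor.~\ref{cor:fib_pback}, that both the path object and the pullback remain within the finite-type subcategory $\lnaft$.
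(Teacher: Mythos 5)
Your proposal is correct and follows exactly the paper's own route: the paper deduces Cor.~\ref{cor:factor} directly from Thm.~\ref{thm:LnA_CFO} together with Brown's Factorization Lemma (Lemma~\ref{lem:fact}), using the path object coming from the factorization of the diagonal in Prop.~\ref{prop:strict_factor}. Your additional check that the pullback $(L,\el) \times_{(L',\el')} (L^{\prime I},\el^{\prime I})$ stays in the finite-type subcategory, via Cor.~\ref{cor:fib_pback}, is precisely the point already verified in the proof of Thm.~\ref{thm:LnA_CFO} (axiom 4), so nothing is missing.
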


The CFO structure on Lie $n$-algebras can be thought of as a lift of the projective CFO structure (Sec.\ \ref{sec:chproj}) on $\Chain$ via the tangent functor \eqref{eq:tan} in the following sense:

\begin{definition}[Def.\ 2.3.3 \cite{BHH:2017}] \label{def:exact_functor}
A functor $F \maps \C \to \cD$ between categories of fibrant objects is a \textbf{(left) exact functor} iff
\begin{enumerate}
\item $F$ preserves the terminal object, fibrations, and acyclic fibrations.
\item Any pullback square in $\C$ of the form
\[
\begin{tikzpicture}[descr/.style={fill=white,inner sep=2.5pt},baseline=(current  bounding  box.center)]
\matrix (m) [matrix of math nodes, row sep=2em,column sep=3em,
  ampersand replacement=\&]
  {  
P \& X \\
Z \& Y \\
}
; 
  \path[->,font=\scriptsize] 
   (m-1-1) edge node[auto] {$$} (m-1-2)
   (m-1-1) edge node[auto,swap] {$$} (m-2-1)
   (m-1-2) edge node[auto] {$f$} (m-2-2)
   (m-2-1) edge node[auto] {$$} (m-2-2)
  ;

  \begin{scope}[shift=($(m-1-1)!.4!(m-2-2)$)]
  \draw +(-0.25,0) -- +(0,0)  -- +(0,0.25);
  \end{scope}
\end{tikzpicture}
\]
in which $f \maps X \to Y$ is a fibration in $\C$, is mapped by $F$ to a pullback square in $\cD$.
\end{enumerate}
\end{definition}
\begin{remark}
Note that axiom 1 in the above definition combined with Lemma \ref{lem:fact} implies that an exact functor $F \maps \C \to \cD$ between CFOs sends weak equivalences to weak equivalences.
\end{remark}

\begin{cor} \label{cor:tanexact}
The tangent functor $\tanch \maps \lnaft \to \Chain^{\proj}$ is an exact functor between categories of fibrant objects.
\end{cor}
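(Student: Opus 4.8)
The plan is to verify directly the two defining conditions of an exact functor listed in Def.~\ref{def:exact_functor}, since the essential content has already been established in the preceding sections and only needs to be assembled.

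For the first condition, I would argue as follows. The terminal object of $\lnaft$ is the zero Lie $n$-algebra (cf.\ axiom 7 in the proof of Thm.~\ref{thm:LnA_CFO}), and $\tanch$ sends it to the zero complex, which is terminal in $\Chain^{\proj}$; thus the terminal object is preserved. The preservation of fibrations and acyclic fibrations is immediate from the way the CFO structure on $\lnaft$ was defined: by Def.~\ref{def:LnA_morphs} together with the observation recorded just after the definition of the tangent functor in \eqref{eq:tan}, a morphism $f \maps (L,\el) \to (L',\el')$ is a fibration (resp.\ a weak equivalence) in $\lnaft$ precisely when $\tanch f = f_1$ is a fibration (resp.\ a weak equivalence) in $\Chain^{\proj}$. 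Consequently $f$ is an acyclic fibration if and only if $\tanch f$ is one, and the first condition holds.

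For the second condition, I would take a pullback square in $\lnaft$ whose right-hand vertical leg $f$ is a fibration. First I would note that the pullback computed in $\lnaft$ coincides with the one computed in $\LnA{n}$: indeed, Cor.~\ref{cor:fib_pback} produces the pullback inside $\LnA{n}$, and the proof of Thm.~\ref{thm:LnA_CFO} shows that its underlying graded vector space is again finite type (being built from the finite-type pullback of the underlying complexes), so it already lies in the full subcategory $\lnaft$. Hence Cor.~\ref{cor:tan_pullback} applies verbatim and tells us that $\tanch$ carries this square to a pullback square in $\Chain$. Since $\Chain^{\proj}$ is just $\Chain$ equipped with a model structure, it has the same underlying limits, so the image is a pullback in $\Chain^{\proj}$, as required.

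I do not expect any genuine obstacle: the statement is a formal consequence of results already in hand, and the tangent functor essentially transports the CFO axioms along its definition. The only point requiring care is the bookkeeping in the previous paragraph, namely confirming that passing to the finite-type subcategory does not alter the relevant pullback, so that Cor.~\ref{cor:tan_pullback}, which is phrased for $\LnA{n}$, may legitimately be invoked. Once that is observed, both conditions of Def.~\ref{def:exact_functor} follow and $\tanch$ is exact.
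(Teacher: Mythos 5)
Your proposal is correct and follows essentially the same route as the paper, whose entire proof is the citation of Thm.~\ref{thm:LnA_CFO} and Cor.~\ref{cor:tan_pullback}: condition (1) of Def.~\ref{def:exact_functor} is immediate because fibrations and weak equivalences in $\lnaft$ are \emph{defined} by their images under $\tanch$, and condition (2) is Cor.~\ref{cor:tan_pullback} once one notes (as in the proof of Thm.~\ref{thm:LnA_CFO}) that the pullback constructed in $\LnA{n}$ is finite type and hence agrees with the pullback in the full subcategory $\lnaft$. Your extra care about this last bookkeeping point is exactly the implicit content of the paper's citation of Thm.~\ref{thm:LnA_CFO}.
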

\begin{proof}
Theorem \ref{thm:LnA_CFO} and Cor.\ \ref{cor:tan_pullback}.
\end{proof}

\subsection*{Comparison with the Vallette CFO structure on $\Linf$} \label{sec:vallette_cfo}
In \cite{Vallette:2014}, Vallette showed that the category $\Linf$ of $\Z$-graded $L_\infty$-algebras
is exactly the subcategory of bifibrant objects in the Hinich model structure \cite{Hinich:2001} on $\dgcocomu$. This result implies the following theorem:
\begin{theorem}[Thm.\ 2.1 \cite{Vallette:2014}] \label{thm:vallette_cfo}
\mbox{}
\begin{enumerate}
\item The category $\Linf$ of $\Z$-graded $L_\infty$-algebras and weak $L_\infty$-morphisms 
has the structure of a category of fibrant objects in which a morphism $$f \maps (L,\el) \to (L',\el')$$ is a weak equivalence iff it is a  $L_\infty$-quasi-isomorphism, and a fibration iff it is a  $L_\infty$-epimorphism (Def.\ \ref{def:quasi-iso}). 

\item Every $L_\infty$-algebra $(L,\el)$ has a functorial path object whose underlying graded vector space is 
\[
\bigl( L \tensor \Omega^{\ast}_{\poly}(\Delta^1) \bigr)_m \cong L_{m}[t] \oplus L_{m+1}[t]dt
\]
where $\Omega^{\ast}_{\poly}(\Delta^1)$ denotes the commutative dg algebra of polynomial de Rham forms on the geometric 1-simplex.
\end{enumerate}
\end{theorem}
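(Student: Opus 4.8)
The plan is to deduce both statements from the Hinich model structure on $\dgcocomu$ \cite{Hinich:2001} together with Vallette's identification of $\Linf$ with its subcategory of bifibrant objects. First I would recall that in this model structure the cofibrations are precisely the degreewise monomorphisms, so that \emph{every} object of $\dgcocomu$ is cofibrant. Consequently the bifibrant objects coincide with the fibrant ones, and Vallette's result lets us identify $\Linf$ (viewed as the full subcategory of cofree dg coalgebras, Sec.\ \ref{sec:cofree}) with the fibrant objects of the model category $\mathcal{M}:=\dgcocomu$.

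The core of part (1) is then the general principle, essentially due to Brown, that the full subcategory $\mathcal{M}_f$ of fibrant objects of any model category $\mathcal{M}$ is a category of fibrant objects, with weak equivalences and fibrations inherited from $\mathcal{M}$. I would verify the axioms of Def.\ \ref{def:cfo} directly in this abstract setting: finite products of fibrant objects are fibrant; pullbacks of fibrations along maps of fibrant objects exist and are fibrations by the model-category axioms, and the pullback remains fibrant since it fibers over a fibrant object; and a path object for fibrant $X$ is obtained by factoring the diagonal $X \xto{\diag} X \times X$ as $X \xrightarrow{\sim} X^{I} \fib X \times X$, the middle term being fibrant because it fibers over the fibrant $X \times X$. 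The genuinely content-laden step is to check that under Vallette's identification the model-structure weak equivalences and fibrations restrict on $\Linf$ to exactly the $L_\infty$-quasi-isomorphisms and $L_\infty$-epimorphisms of Def.\ \ref{def:quasi-iso}. For weak equivalences this is the filtration/spectral-sequence comparison already sketched in Remark \ref{rmk:acyclic_fibs}(3): the tensor-word-length filtration on $\S(\bs L)$ relates $H(F)$ to $H(f_1)$, so $F$ is a filtered quasi-isomorphism precisely when $f_1$ is a quasi-isomorphism. For fibrations one analyzes the right-lifting characterization against the generating acyclic cofibrations and matches it with degreewise surjectivity of $f_1$.

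For part (2), rather than the non-functorial factorization produced above, I would give the explicit simplicial path object. Tensoring $(L,\el)$ with the commutative dg algebra $\Omega^{\ast}_{\poly}(\Delta^1)$ yields an $L_\infty$-algebra $L \tensor \Omega^{\ast}_{\poly}(\Delta^1)$, whose brackets are the $\el_k$ extended by the multiplication of $\Omega^{\ast}_{\poly}(\Delta^1)$ with Koszul signs; the degree bookkeeping gives the stated identification $\bigl(L \tensor \Omega^{\ast}_{\poly}(\Delta^1)\bigr)_m \cong L_m[t] \oplus L_{m+1}[t]\,dt$, since $dt$ sits in homological degree $-1$. The unit $\kk \to \Omega^{\ast}_{\poly}(\Delta^1)$ and the two vertex evaluations $\Omega^{\ast}_{\poly}(\Delta^1) \to \kk$ induce strict $L_\infty$-morphisms $s \maps L \to L \tensor \Omega^{\ast}_{\poly}(\Delta^1)$ and $(d_0,d_1) \maps L \tensor \Omega^{\ast}_{\poly}(\Delta^1) \to L \times L$. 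One checks that $s$ is a quasi-isomorphism on the level of $f_1$ (since $\Omega^{\ast}_{\poly}(\Delta^1)$ is quasi-isomorphic to $\kk$), that $(d_0,d_1)$ is surjective in every degree and hence an $L_\infty$-epimorphism, and that their composite is the diagonal. Functoriality in $L$ is immediate from the functoriality of $\blank \tensor \Omega^{\ast}_{\poly}(\Delta^1)$.

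I expect the main obstacle to be the identification step in part (1): translating Hinich's abstractly defined weak equivalences and fibrations into the concrete conditions on $f_1$, which is precisely where the substance of Vallette's theorem lies. A secondary subtlety is verifying that $(d_0,d_1)$ is surjective in \emph{all} integer degrees, not merely the positive ones, since in the $\Z$-graded setting the fibration notion of Def.\ \ref{def:quasi-iso} demands surjectivity everywhere.
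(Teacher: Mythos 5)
Your proposal is correct and follows essentially the same route as the paper: the paper simply cites Vallette's identification of $\Linf$ with the bifibrant objects of the Hinich model structure on $\dgcocomu$ and notes that the theorem follows, which is exactly the deduction you spell out (all objects are cofibrant, so bifibrant equals fibrant; Brown's fibrant-objects principle; plus the matching of the two classes of morphisms). The details you add --- the word-length filtration argument identifying weak equivalences with $L_\infty$-quasi-isomorphisms and the explicit path object $L \tensor \Omega^{\ast}_{\poly}(\Delta^1)$ --- are the content of Vallette's Thm.\ 2.1 itself, which is precisely what the paper quotes rather than reproves.
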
 

Let us make a few comparisons between the CFO structure on $\lnaft$ given in Thm.\ \ref{thm:LnA_CFO} and Vallette's CFO structure on $\Linf$. 

First, the weak equivalences in $\lnaft$ and $\Linf$ obviously coincide. Furthermore, it follows from statement 2 in Remark \ref{rmk:acyclic_fibs} that the acyclic fibrations also coincide. 
If $(L,\el) \in \LnA{n}$, then the path object $(L \tensor \Omega^{\ast}_{\poly}(\Delta^1), \el^{\Omega})$
for $(L,\el) \in \Linf$ is not a Lie $n$-algebra. And the 
degree zero truncation of $(L \tensor \Omega^{\ast}_{\poly}(\Delta^1)$ is certainly not finite type.
However, Vallette's Prop.\ 3.3 in \cite{Vallette:2014} implies that $\Linf$ is also equipped with a functorial cylinder object. If $(L,\el)$ is a finite type Lie $n$-algebra, 
then the degree zero truncation of the cylinder object $(L \tensor J, \ti{\el})$ is also finite type. It would be interesting to work out further details and show that the CFO structure on $\lnaft$ satisfies Brown's additional axioms (F) and (G) in \cite[Sec.\ 6]{Brown:1973}. This would imply that $\lnaft$ is ``almost'' a model category, in the sense of Vallette \cite[Sec.\ 4.1]{Vallette:2014}.


\section{Maurer--Cartan sets} \label{sec:MCfunc}
In this section, we analyze Maurer-Cartan (MC) sets of $\Z$-graded $L_\infty$-algebras constructed by tensoring Lie $n$-algebras with bounded commutative dg algebras. 
This construction naturally arises when studying formal deformation problems in characteristic zero. It also appears, as mentioned in the introduction, in the definition of the spatial realization functor for chain Lie algebras. The smooth analog of the Maurer-Cartan set is featured in the defintion
of Henriques' integration functor \cite{Henriques:2008} for Lie $n$-algebras. 

We begin by recalling some basic facts about Maurer-Cartan elements from \cite[Sec.\ 2]{DR:2017} and references within.
 We note that the $L_\infty$-algebras in \cite{DR:2017} are assumed to be complete and filtered, which is not the case in this paper. However, the particular results that we recall below will still hold 
since all $L_\infty$-algebras involved are sufficiently ``tame'' in the following sense:
\begin{definition} \label{def:tame}
A $\Z$--graded $L_\infty$-algebra $(L, \el)$ is \textbf{tame} if there exists an $N \geq 1$ such that for all $k \geq N$
\[
\el_k(x_1, \ldots, x_k) = 0 \quad \forall x_1, \ldots x_k \in L_{-1}.
\] 
A (weak) $L_\infty$-morphism $f \maps (L, \el) \to (L',\el')$ between tame $L_\infty$-algebra 
is a \textbf{tame morphism} if there exists an $N \geq 1$ such that for all $k \geq N$
\[
f_k(x_1, \ldots, x_k) = 0 \quad \forall x_1, \ldots x_k \in L_{-1}.
\]   
\end{definition}
For trivial reasons, every Lie $n$-algebra is tame and every morphism between Lie $n$-algebra is tame for any $n \in \N \cup \{\infty\}$.
Given a tame $\Z$--graded $L_\infty$-algebra $(L,\el)$, the \textbf{curvature} $\curv \maps L_{-1} \to L_{-2}$ is the function
\begin{equation} \label{eq:curv}
\begin{split}
\curv(a) & := \bs^{-1} \sum_{k \geq 1} \frac{1}{k!} \delta^1_k \bigl(\bs a, \bs a, \ldots, \bs a)\\ 
& = \ell_1(a) + \sum_{k \geq 2} \sgn{k} \frac{1}{k!} \ell_k(a,a,\ldots,a) \in L_{-2}.
\end{split}
\end{equation}
where $\delta$ is the corresponding codifferential on $\S(\bs L)$. For any $a \in L_{-1}$, the expression $\exp(\bs a) -1$ is a well-defined element of the completion of $\S(\bs L)$ defined by the corresponding formal power series. By extending $\delta$ in the natural way, a straightforward calculation shows that
\begin{equation} \label{eq:exp1}
\delta \bigl( \exp(\bs a) - 1 \bigr) = \exp( \bs a) (\bs \curv(a))
\end{equation}  
and hence
\begin{equation} \label{eq:exp2}
\bs \curv(a) = \pr_{\bs L} \circ \delta \bigl( \exp(\bs a) - 1 \bigr),
\end{equation}  
where $\pr_{\bs L}$ denotes the canonical projection to the vector space $\bs L$.
The \textbf{Maurer--Cartan elements} of $L$ are the elements of the subset
\[
\MC(L):= \bigl \{ x \in L_{-1} ~\vert ~ \curv(x)=0 \bigr \}.
\]

Similarly, let $f \maps (L, \el_k) \to (L',\el'_k)$ be a tame morphism.
Such a morphism induces a function $f_\ast \maps L_{-1} \to L'_{-1}$ defined as:
\begin{equation} \label{eq:Fstar}
\begin{split}
f_\ast(a) &:= \bs^{-1} \sum_{k \geq 1} \frac{1}{k!} F^1_k( \bs a, \bs a, \ldots, \bs a) \\
& = f_1(a) + \sum_{k \geq 2} \sgn{k} \frac{1}{k!} f_k(a,a,\ldots,a).
\end{split}
\end{equation}
As in Eq.\ \ref{eq:exp1}, we extend $F$ to the completions of $\S(\bs L)$ and $\S(\bs L')$, and 
a straightforward calculation shows that
\begin{equation} \label{eq:exp3}
F \bigl( \exp(\bs a) - 1 \bigr) = \exp( \bs f_\ast(a)) - 1.
\end{equation}  
\begin{remark} \label{rmk:coalg_tame}
\mbox{}
\begin{enumerate}
\item If $f \maps (L,\el) \to (L',\el')$ and $g \maps (L',\el') \to (L'',\el'')$ are tame morphisms, then it follows from the composition formula \eqref{eq:comp} for $L_\infty$-morphisms that  $gf \maps (L,\el) \to (L'',\el'')$ is also tame. Indeed, if $f_k$ vanishes on $(L_{-1})^{\tensor k}$ for all $k \geq N_{L}$ and $g_k$ vanishes on $(L'_{-1})^{\tensor k}$ for all $k \geq N_{L'}$ then
$(gf)_k$ vanishes on $(L_{-1})^{\tensor k}$ for all $k \geq N_LN_{L'}$.

\item Note that the function $f_\ast$ in Eq.\ \ref{eq:Fstar} is also well-defined for any 
\underline{coalgebra} morphism $F \maps \S(\bs L) \to \S(\bs L')$
satisfying
\[
F^1_k(\bs x_1, \ldots, \bs x_k) = 0 \quad \forall x_1, \ldots x_k \in L_{-1}.
\]    
for $k \gg 1$. Compatibility of $F$ with the codifferentials is obviously not necessary.
We will use this in the proof of Prop.\ \ref{prop:MC-pullbacks}.
\end{enumerate}
\end{remark}

We note that the first statement in Remark \ref{rmk:coalg_tame}
along with  Eqs.\ \ref{eq:exp1}, \ref{eq:exp2}, and \ref{eq:exp3} imply the following result: 
\begin{proposition}[cf.\ Prop.\ 2.2 \cite{DR:2017}] \label{prop:MC}
Let $f \maps (L, \el) \to (L',\el')$ be a tame $L_\infty$-morphism between
tame $\Z$-graded $L_\infty$-algebras.
Then the function \eqref{eq:Fstar} restricts to a well-defined function $f_\ast \maps \MC(L) \to \MC(L')$ between the corresponding Maurer-Cartan sets. Moreover, the assignment
\[
(L, \el) \xto{f} (L', \el') \quad \longmapsto \quad  \MC(L) \xto{f_\ast} \MC(L')
\]
is functorial.

\end{proposition}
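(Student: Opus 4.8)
The plan is to exploit the ``group-like'' reformulation of the curvature and of $f_\ast$ provided by Eqs.\ \ref{eq:exp1}--\ref{eq:exp3}. The essential point is that for $a \in L_{-1}$ the element $\exp(\bs a) - 1$ lives in the completion $\wht{\S}(\bs L) = \prod_k \S^k(\bs L)$, that tameness (Def.\ \ref{def:tame}) guarantees the convergence of all the series involved, and that both the codifferentials and the coalgebra morphisms extend to these completions compatibly with the extensions already used to derive Eqs.\ \ref{eq:exp1} and \ref{eq:exp3}. Granting this, both assertions reduce to short formal manipulations.

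For well-definedness, I would fix $a \in \MC(L)$, so that $\curv(a) = 0$. By Eq.\ \ref{eq:exp1} this gives $\delta\bigl(\exp(\bs a) - 1\bigr) = \exp(\bs a)\bigl(\bs\curv(a)\bigr) = 0$. Since $f$ is an $L_\infty$-morphism, its coalgebra map $F$ satisfies $F\delta = \delta' F$ (Remark \ref{rmk:h0}), and this identity persists after passing to completions, again by tameness. Applying $F$ and invoking Eq.\ \ref{eq:exp3} therefore yields
\[
\delta'\bigl(\exp(\bs f_\ast(a)) - 1\bigr) = \delta' F\bigl(\exp(\bs a) - 1\bigr) = F\delta\bigl(\exp(\bs a) - 1\bigr) = 0.
\]
Using Eq.\ \ref{eq:exp2} for $L'$ now gives $\bs\curv'(f_\ast(a)) = \pr_{\bs L'}\circ\delta'\bigl(\exp(\bs f_\ast(a)) - 1\bigr) = \pr_{\bs L'}(0) = 0$, i.e.\ $f_\ast(a) \in \MC(L')$.

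For functoriality, the identity case is immediate: the coalgebra map of $\id_{(L,\el)}$ is the identity, so Eq.\ \ref{eq:exp3} forces $\exp(\bs \id_\ast(a)) - 1 = \exp(\bs a) - 1$, whence $\id_\ast = \id$ after projecting onto the word-length-one part $\bs L$. For a composite $gf$, the first statement of Remark \ref{rmk:coalg_tame} ensures $gf$ is again tame, so $(gf)_\ast$ is defined, and its associated coalgebra map is $GF$ by the composition formula Eq.\ \ref{eq:comp}. Applying Eq.\ \ref{eq:exp3} twice gives
\[
\exp\bigl(\bs (gf)_\ast(a)\bigr) - 1 = GF\bigl(\exp(\bs a) - 1\bigr) = G\bigl(\exp(\bs f_\ast(a)) - 1\bigr) = \exp\bigl(\bs g_\ast(f_\ast(a))\bigr) - 1,
\]
and projecting onto $\bs L''$ yields $(gf)_\ast = g_\ast \circ f_\ast$.

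I expect the only genuine obstacle to be the careful justification that $\delta$, $F$, and the identity $F\delta = \delta'F$ all extend to the completions $\wht{\S}(\bs L)$ and $\wht{\S}(\bs L')$; this is precisely where the tameness hypothesis is used, ensuring that the relevant series converge termwise in each word-length component. Everything after that is the routine bookkeeping of recovering $f_\ast$, $g_\ast\circ f_\ast$, and $(gf)_\ast$ as the primitive (word-length-one) parts of the corresponding group-like elements.
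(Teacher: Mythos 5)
Your proposal is correct and follows essentially the same route as the paper, which proves this proposition precisely by combining the first statement of Remark \ref{rmk:coalg_tame} with Eqs.\ \ref{eq:exp1}, \ref{eq:exp2}, and \ref{eq:exp3}; you have simply written out the formal manipulations that the paper leaves implicit. Your identification of the only delicate point --- that tameness is what makes the extensions of $\delta$, $F$, and the identity $F\delta = \delta' F$ to completions legitimate on the group-like elements $\exp(\bs a)-1$ --- matches the paper's tacit use of ``extending $\delta$ in the natural way.''
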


\subsection{``Deformation  functors''} \label{sec:def_func}
The following construction provides important examples of tame  $L_\infty$-algebras.
We denote by $\bcdga$ the category whose objects are unital, non-negatively and cohomologically graded commutative dg $\kk$-algebras which are bounded from above. Morphisms in $\bcdga$ are unit preserving cdga morphisms. 
Let $n \in \N \cup \{\infty\}$. Let $(L,\el) \in \lnaft$ be a finite type Lie $n$-algebra and let $(B,d_B) \in \bcdga$.  
Denote by 
\[
(L \tensor B, \el^{B}) 
\]
the $\Z$--graded $L_\infty$-algebra whose underlying chain complex is $(L \tensor B, \el^B_1)$ where 
\[
\begin{split}
(L \tensor B)_m &:= \bigoplus_{i+j=m} L_{i} \tensor B^{-j}\\
\el^B_1(x\tensor b) &:= \el_1 x \tensor b + (-1)^{\deg{x}}  x \tensor d_B b
\end{split}
\]
and whose higher brackets are defined as:
\begin{equation} \label{eq:elB}
\el^B_k \bigl(x_1 \tensor b_1, \ldots, x_k \tensor b_k \bigr):= (-1)^{\varepsilon} \el_k(x_1,\ldots,x_k) \tensor b_1b_2 \cdots b_k,
\end{equation} 
with 
\[
\varepsilon :=  \sum_{1 \leq i < j \leq k} \deg{b_i}\deg{x_j}.
\]
If $f \maps (L, \el) \to (L',\el')$ is a morphism of Lie $n$-algebras, then it is easy to verify that the maps $f^B_k\maps \Lambda^{k}(L \tensor B) \to L' \tensor B$ defined as
\begin{equation} \label{eq:fB}
f^B_k\bigl(x_1 \tensor b_1, \ldots, x_k \tensor b_k \bigr):= (-1)^{\varepsilon} f_k(x_1,\ldots,x_k) \tensor b_1b_2 \cdots b_k.
\end{equation} 
assemble together to give a $L_\infty$-morphism $f^B \maps (L\tensor B, \el^B) \to (L' \tensor B, \el^{ \prime B})$ in $\Linf$.

\begin{lemma} \label{lem:MC_CE}
Let $(L,\el) \in \lnaft$ be a finite type Lie $n$-algebra and $(B,d_B) \in \bcdga$ a bounded cdga.
\begin{enumerate}
\item If $f \maps (L, \el) \to (L',\el')$ is a morphism of Lie $n$-algebras, then the induced $L_\infty$-morphism
\[
f^B \maps (L\tensor B, \el^B) \to (L' \tensor B, \el^{\prime B})
\]
is a tame morphism between tame $L_\infty$-algebras. 
\item 
The assignment
\[
(L\tensor B, \el^B_k) \xto{f^B} (L' \tensor B, \el^{\prime B}_k)
\quad \longmapsto \quad \MC(L\tensor B) \xto{f^B_\ast} \MC(L'\tensor B).
\]
defines a functor
\begin{equation} \label{eq:MC-cdga}
\MC(-\tensor B) \maps \lnaft \to \Set
\end{equation}
natural in $B \in \bcdga$.

\end{enumerate}
\end{lemma}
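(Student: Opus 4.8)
The plan is to establish Part 1 directly from the boundedness of $B$, and then to obtain Part 2 by combining Part 1 with Prop.\ \ref{prop:MC} and the functoriality of the operation $(-)\tensor B$.

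For Part 1, recall that $L$ and $L'$ are concentrated in non-negative degrees while $(B,d_B)$ is bounded from above, so I may fix $M \geq 0$ with $B^p = 0$ for all $p > M$. First I would identify $(L \tensor B)_{-1}$: a homogeneous decomposable $x \tensor b$ lies in degree $-1$ exactly when $\deg{x} = i \geq 0$ and $b \in B^{i+1}$, so every degree $-1$ element is a sum of decomposables $x \tensor b$ whose $B$-factor has cohomological degree $\geq 1$. Since $\el^B_k$ and $f^B_k$ are multilinear, it suffices to test tameness on such decomposables, and here \eqref{eq:elB} and \eqref{eq:fB} give outputs proportional to the product $b_1 \cdots b_k \in B^{\geq k}$. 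This product vanishes once $k > M$, so taking $N = M+1$ shows that $(L \tensor B, \el^B)$ and $(L' \tensor B, \el^{\prime B})$ are tame and that $f^B$ is tame, which is Part 1.

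For Part 2, Part 1 lets me invoke Prop.\ \ref{prop:MC}, which at once provides the well-defined map $f^B_\ast \maps \MC(L \tensor B) \to \MC(L' \tensor B)$ and the functoriality of $\MC$ on tame morphisms. What remains is to check that $(-)\tensor B \maps \lnaft \to \Linf$ is a functor into tame objects, so that $\MC(-\tensor B)$ is the composite $\MC \circ ((-)\tensor B)$. Preservation of identities is immediate from \eqref{eq:fB}. The composition law $(gf)^B = g^B f^B$ is the one step that requires real computation: expanding both sides through the composition formula \eqref{eq:comp} and the definition \eqref{eq:fB}, I would match the shuffle sums and Koszul signs, the key point being that the reorderings of the $B$-factors forced by the shuffles are absorbed by the graded commutativity and associativity of the product in $B$. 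Functoriality of $\MC$ then upgrades this to $(gf)^B_\ast = g^B_\ast f^B_\ast$, establishing that $\MC(-\tensor B)$ is a functor.

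Finally, for naturality in $B$, given $\psi \maps B \to B'$ in $\bcdga$ I would note that $\id_L \tensor \psi$ is a strict, hence tame, $L_\infty$-morphism $L \tensor B \to L \tensor B'$, since $\psi$ commutes with the differential and is multiplicative; its induced map on MC sets is the candidate natural transformation. Its naturality square reduces, via functoriality of $\MC$, to the $L_\infty$-morphism identity $(\id_{L'} \tensor \psi) \circ f^B = f^{B'} \circ (\id_L \tensor \psi)$, which holds because $\psi$ preserves degrees (so the signs $\varepsilon$ agree) and satisfies $\psi(b_1 \cdots b_k) = \psi(b_1) \cdots \psi(b_k)$. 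The main obstacle throughout is the composition law $(gf)^B = g^B f^B$; every other step is either bookkeeping or a direct appeal to Prop.\ \ref{prop:MC}, and the only place genuine care is needed is the sign- and shuffle-matching, which is exactly where the graded commutativity of $B$ enters.
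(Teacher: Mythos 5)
Your proposal is correct and follows essentially the same route as the paper: Part 1 is the identical degree-counting argument (every degree $-1$ element of $L \tensor B$ has $B$-factors of positive cohomological degree, so the products $b_1\cdots b_k$ vanish once $k$ exceeds the bound on $B$), and Part 2 — which the paper dismisses as ``straightforward'' — is exactly the intended combination of the composition law $(gf)^B = g^B f^B$ with the functoriality of $\MC$ on tame morphisms from Prop.\ \ref{prop:MC}. Your treatment of naturality in $B$ via the strict morphisms $\id_L \tensor \psi$ likewise just fills in what the paper leaves implicit.
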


\begin{proof}
Statement (2) is straightforward. For statement (1), note that since the underlying cochain complex of $B$ is bounded, there exists an $N \geq 0$ such that
$B= \bigoplus_{i \geq 0}^N B^i$. Since the underlying chain complex of $L$ is concentrated in non-negative degrees we have $(L \tensor B)_{-1} = \bigoplus_{i \geq 0}^{N-1} L_i \tensor B^{i+1}$. It follows from Eq.\ \ref{eq:elB} and Eq.\ \ref{eq:fB} that for all $k \geq N$, and any $a_1,\ldots, a_k \in (L \tensor B)_{-1}$, we have $\el^B_k(a_1,\ldots,a_k)=0$ and $f^B_k(a_1,\ldots,a_k)=0$. A similar argument shows that $(L'\tensor B, \el^{\prime B})$ is also tame.
\end{proof}

We end this section by showing that the functor $\MC(-\tensor B)$ defined in \eqref{eq:MC-cdga}
preserves certain pullback diagrams. We provide a careful detailed proof of this rather straightforward fact in order to have the analogous statement in the category of Banach manifolds follow as a simple corollary (Cor.\ \ref{cor:MC-pullbacks}).
\begin{proposition} \label{prop:MC-pullbacks}
Let $B \in \bcdga$ be a bounded cdga.
Let $f \maps (L,\el) \to (L'',\el'')$ be a fibration and $g \maps (L',\el') \to (L'',\el'')$ be a morphism in $\lnaft$. Let $(L_P,\el_P)$ be the pullback of the diagram
$(L',\el') \xto{g} (L'',\el'') \xleftarrow{f} (L,\el)$.
Then the induced commutative diagram of sets
\begin{equation} \label{diag:MC-pullbacks}
\begin{tikzpicture}[descr/.style={fill=white,inner sep=2.5pt},baseline=(current  bounding  box.center)]
\matrix (m) [matrix of math nodes, row sep=2em,column sep=3em,
  ampersand replacement=\&]
  {  
\MC(L_P \tensor B) \& \MC({L} \tensor B) \\
\MC(L' \tensor B) \& \MC(L'' \tensor B)\\
}; 
  \path[->,font=\scriptsize] 
   (m-1-1) edge node[auto] {$$} (m-1-2)
   (m-1-1) edge node[sloped,below] {$$} (m-2-1)
   (m-1-2) edge node[auto] {$f^B_\ast$}  (m-2-2)
   (m-2-1) edge node[auto] {$g^B_\ast$} (m-2-2)
  ;


\end{tikzpicture}
\end{equation}
is a pullback square. 
\end{proposition}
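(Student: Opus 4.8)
The plan is to reduce to the case of a strict fibration and then transport Maurer--Cartan elements through the coalgebra automorphism $H$ of Prop.~\ref{prop:strict_pullback}. First I would reduce to the strict case: by Lemma~\ref{lem:strict_fib} there is an isomorphism $\phi\maps(L,\hat\el)\xto{\cong}(L,\el)$ with $f\phi$ a strict fibration and $(f\phi)_1=f_1$, and the pullback of $f\phi$ along $g$ agrees with that of $f$ along $g$ up to the isomorphism $\phi$. Since $\phi^B_\ast$ is a bijection on Maurer--Cartan sets (Prop.~\ref{prop:MC}), it suffices to treat $f=f_1$ strict. In that case Prop.~\ref{prop:strict_pullback} identifies $L_P$ with $\ti L$, where $\S(\bs\ti L)\subseteq\S(\bs L'\oplus\bs L)$, the codifferential is $\ti\delta=J\,\delta_\oplus\,H\vert_{\S(\bs\ti L)}$, and the two structure projections are realized by the coalgebra maps $\ppr H$ and $\ppr' H$ of diagram~\eqref{diag:strict_pullback3}.

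The key step is to tensor the coalgebra automorphisms $H,J$ of Eqs.~\eqref{eq:H}--\eqref{eq:J} with $B$. Applying formula~\eqref{eq:fB} to the structure maps $H^1_k,J^1_k$ (legitimate for arbitrary coalgebra morphisms by Remark~\ref{rmk:coalg_tame}(2)) produces mutually inverse coalgebra automorphisms $H^B,J^B$ of $\S\bigl(\bs((L'\oplus L)\tensor B)\bigr)$; tameness (Lemma~\ref{lem:MC_CE}) ensures that the induced maps $H^B_\ast,J^B_\ast$ on degree $-1$ elements are well defined, and Claim~\ref{claim:1} gives $H^B_\ast J^B_\ast=\id$. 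Because tensoring with $B$ is functorial for composition of cofree-coalgebra maps and coderivations, one has $\ti\delta^B=J^B\delta_\oplus^B H^B$, while $(L'\oplus L)\tensor B=(L'\tensor B)\oplus(L\tensor B)$ with $\delta_\oplus^B$ acting componentwise, so $\MC((L'\oplus L)\tensor B)=\MC(L'\tensor B)\times\MC(L\tensor B)$. For $a\in(\ti L\tensor B)_{-1}$, Eqs.~\eqref{eq:exp1}--\eqref{eq:exp3} then yield
$$\ti\delta^B\bigl(\exp(\bs a)-1\bigr)=J^B\,\delta_\oplus^B\bigl(\exp(\bs H^B_\ast a)-1\bigr)=J^B\Bigl(\exp(\bs H^B_\ast a)\,\bigl(\bs\,\curv_\oplus(H^B_\ast a)\bigr)\Bigr),$$
which vanishes iff $\curv_\oplus(H^B_\ast a)=0$, since $J^B$ is invertible and $\exp(\bs H^B_\ast a)$ is group-like. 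Hence $a\in\MC(\ti L\tensor B)$ iff $H^B_\ast a\in\MC(L'\tensor B)\times\MC(L\tensor B)$.

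Finally I would identify the image and match the projections. Writing $H^B_\ast(\alpha',\beta)=(\alpha',\gamma)$, the formulas~\eqref{eq:H} give $\gamma=\beta+\sigma^B g^B_\ast(\alpha')$ with $\sigma^B=\sigma\tensor\id_B$. Using the defining constraint of $(\ti L\tensor B)_{-1}$ (the tensor of~\eqref{eq:Ltilde}), the property~\eqref{eq:sigma} of $\sigma$, and the observation that $g^B_\ast(\alpha')-g^B_1(\alpha')$ lies in the part of $L''\tensor B$ coming from $\bigoplus_{i\geq1}L''_i$, on which $\sigma$ is a section of $f_1$, a short computation gives $f^B_\ast(\gamma)=g^B_\ast(\alpha')$; the reverse direction (recovering $\beta=\gamma-\sigma^B g^B_\ast(\alpha')$ via $J^B_\ast$) shows this is an equivalence. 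Thus $H^B_\ast$ restricts to a bijection from $\MC(\ti L\tensor B)$ onto $\{(\alpha',\gamma)\in\MC(L'\tensor B)\times\MC(L\tensor B):f^B_\ast\gamma=g^B_\ast\alpha'\}$, that is, onto $\MC(L'\tensor B)\times_{\MC(L''\tensor B)}\MC(L\tensor B)$. Since $\ppr H,\ppr' H$ are the structure projections and $\ppr,\ppr'$ are strict projections, functoriality of $(-)^B_\ast$ identifies the Maurer--Cartan projections out of $\MC(\ti L\tensor B)$ with $\pr_L\circ H^B_\ast$ and $\pr_{L'}\circ H^B_\ast$, which under the bijection become $(\alpha',\gamma)\mapsto\gamma$ and $(\alpha',\gamma)\mapsto\alpha'$; hence square~\eqref{diag:MC-pullbacks} is precisely the fiber-product square.

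I expect the main obstacle to be the bookkeeping concentrated in the second and third steps: verifying $\ti\delta^B=J^B\delta_\oplus^B H^B$ with the correct Koszul signs (i.e.\ that $-\tensor B$ respects composition of these coalgebra-level maps), and confirming the degree argument that places $g^B_\ast(\alpha')-g^B_1(\alpha')$ in the range where $\sigma$ is a genuine section of $f_1$, so that $f^B_\ast(\gamma)=g^B_\ast(\alpha')$ comes out cleanly. Everything else is formal once the bijection $H^B_\ast$ is in hand.
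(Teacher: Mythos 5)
Your proposal is correct and follows essentially the same route as the paper's proof: reduction to a strict fibration via Lemma \ref{lem:strict_fib}, the explicit pullback model of Prop.\ \ref{prop:strict_pullback}, well-definedness of $h^B_\ast$, $j^B_\ast$ via Remark \ref{rmk:coalg_tame} and Lemma \ref{lem:MC_CE}, and the exponential identities \eqref{eq:exp1}--\eqref{eq:exp3} together with Claim \ref{claim:1} to transport Maurer--Cartan elements. The only cosmetic difference is direction: the paper constructs the inverse map $\varphi = j^B_\ast$ from the set-level fiber product into $\MC(\ti{L}\tensor B)$ and checks it is well defined, whereas you push $H^B_\ast$ out of $\MC(\ti{L}\tensor B)$ onto the fiber product; the degree-by-degree verifications involved are the same.
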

\begin{proof}
We first consider the special case in which $f$ is strict, and then use this to prove the general case.
\begin{pfcases}[leftmargin=20pt]
\item \underline{Case 1}: $f=f_1 \maps (L,\el) \to (L'',\el'')$  is a strict fibration.
\myspace

\myindent Let $g \maps (L',\el') \to (L'',\el'')$ be a morphism in $\lnaft$. 
As in the proof of Prop.\ \ref{prop:strict_pullback}, we have the following pullback diagram 
in $\lnaft$:

\begin{equation} \label{diag:new:MCpb1}
\begin{tikzpicture}[descr/.style={fill=white,inner sep=2.5pt},baseline=(current  bounding  box.center)]
\matrix (m) [matrix of math nodes, row sep=2em,column sep=3em,
  ampersand replacement=\&]
  {  
(\ti{L},\ti{\el})  \& \bigl  (L, \el ) \\
(L',\el') \& (L'', \el'' ) \\
}; 
  \path[->,font=\scriptsize] 
   (m-1-1) edge node[auto] {$\pr h$} (m-1-2)
   (m-1-1) edge node[auto,swap] {$\pr'h$} (m-2-1)
   (m-1-2) edge node[auto] {$f$} (m-2-2)
   (m-2-1) edge node[auto] {$g$} (m-2-2)
  ;

  \begin{scope}[shift=($(m-1-1)!.4!(m-2-2)$)]
  \draw +(-0.25,0) -- +(0,0)  -- +(0,0.25);
  \end{scope}
\end{tikzpicture}
\end{equation}
where $\ti{L}$ is the pullback of diagram \eqref{diag:strict_pullback2} in $\Chain$, and
$h \maps (\ti{L},\ti{\el}) \to (L' \oplus L, \el' \oplus \el)$ is the morphism in $\lnaft$ associated to the dg coalgebra morphism $H$ defined in Eq.\ \ref{eq:H}.  
Let $B \in \bcdga$. Then, via Eqs.\ \ref{eq:Fstar} and \ref{eq:fB}, 
the morphism $h$ induces the function $h^{B}_{\ast} \maps \MC(\ti{L} \tensor B) \to \MC \bigl( (L' \oplus L) \tensor B \bigr) \cong \MC(L' \tensor B) \times \MC(L \tensor B)$, where
\begin{equation*} 
 h^B_\ast(a',a) = (a', a + \bigl (\bs^{-1} \sigma \bs \tensor \id_B \bigr) g^B_\ast(a')).
\end{equation*} 
Applying the functor $\MC(-\tensor B)$ to \eqref{diag:new:MCpb1} gives us the following commutative diagram of sets
\begin{equation*} 
\begin{tikzpicture}[descr/.style={fill=white,inner sep=2.5pt},baseline=(current  bounding  box.center)]
\matrix (m) [matrix of math nodes, row sep=2em,column sep=2em,
  ampersand replacement=\&]
  {  
\MC(\ti{L}\tensor B) \& [-1 cm] \& \\ [-0.5cm]
\& E \& \MC({L} \tensor B) \\
\& \MC(L' \tensor B) \& \MC(L'' \tensor B)\\
}; 
  \path[->,font=\scriptsize] 
   (m-2-2) edge node[auto] {$$} (m-2-3)
   (m-2-2) edge node[auto,swap] {$$} (m-3-2)
   (m-2-3) edge node[auto] {$f^B_\ast$}  (m-3-3)
   (m-3-2) edge node[auto] {$g^B_\ast$} (m-3-3)
   (m-1-1) edge node[auto] {$h^B_\ast $} (m-2-2)
   (m-1-1) edge [bend left=20]  node[auto] {$\pr^B_\ast \circ h^B_\ast$} (m-2-3)
   (m-1-1) edge [bend right=40]  node[auto,swap] {$\pr'^B_\ast \circ h^B_\ast$} (m-3-2)
  ;

   \begin{scope}[shift=($(m-2-2)!.3!(m-3-3)$)]
   \draw +(-0.25,0) -- +(0,0)  -- +(0,0.25);
   \end{scope}
\end{tikzpicture}
\end{equation*}
in which the pullback is denoted by
\begin{equation} \label{eq:E}
E: = \MC(L' \tensor B) \times_{\MC(L'' \tensor B)}  \MC(L \tensor B).
\end{equation}
We proceed by explicitly constructing the inverse of $h^B_\ast $.

\myindent Let $J \maps \S( \bs L' \oplus \bs L) \to \S( \bs L' \oplus \bs L)$ denote the inverse of the coalgebra morphism $H$, which was introduced in Eq.\ \ref{eq:J}. 
Even though $J$ may fail to preserve the codifferential, it follows from Remark \ref{rmk:coalg_tame} and Lemma \ref{lem:MC_CE} that the function 
\[
j^B_\ast \maps (L' \tensor B \oplus L \tensor B)_{-1} \to 
(L' \tensor B \oplus L \tensor B)_{-1} 
\]
is well-defined. We denote by $(L' \tensor B)_{-1} \times_{(L'' \tensor B)_{-1}} (L \tensor B)_{-1}$ 
the pullback of the diagram of sets:
\begin{equation} \label{eq:new:MCpb1}
(L' \tensor B)_{-1} \xto{g^B_\ast} (L'' \tensor B)_{-1} \xleftarrow{f^B_\ast} (L \tensor B)_{-1}. 
\end{equation}
Now let
\begin{equation*} 
\varphi \maps 
(L' \tensor B)_{-1} \times_{(L'' \tensor B)_{-1}} (L \tensor B)_{-1}  \to (L'\tensor B \oplus L \tensor B)_{-1} 
\end{equation*}
denote the function 
\begin{equation} \label{eq:varphi}
\varphi(a',a):= j^B_\ast(a',a) = \bigl (a', a - (\bs^{-1} \sigma \bs \tensor \id_B ) g^B_\ast(a') \bigr).
\end{equation}
We claim that $\varphi$ restricted to the pullback $E$ is indeed the inverse to $h^B_\ast$.
Let us first show that $\varphi \vert_E \maps E \to \MC(\ti{L} \tensor
B)$ is a well-defined function. We break the calculation into two steps:

\begin{pfsteps}[leftmargin=20pt]
\item \underline{Step (i)}: $\im \varphi \sse (\ti{L} \tensor B)_{-1}$. 

\myspace
\myindent Suppose $(a',a) \in (L' \tensor B)_{-1} \times_{(L'' \tensor B)_{-1}} (L \tensor B)_{-1}$, so 
that $g^B_\ast(a')=f^B_\ast(a)$. 
It will be convenient to express the relevant terms involved as sums i.e.\
$(a',a) = \sum_{i \geq 0} (a'_i,a_i)$ and
\[
\begin{split}
\varphi(a',a) = \sum_{i \geq 0} \varphi(a',a)_i, \quad
g^B_\ast(a') = \sum_{i \geq 0}g^B_\ast(a')_i =  \sum_{i \geq 0}f^B_\ast(a)_i=f^B_\ast(a), 
\end{split}
\]
where
$(a'_i,a_i) \in (L'_i \oplus L_i) \tensor B^{i+1},$  $\varphi(a',a)_i \in (L'_i \oplus L_i) \tensor B^{i+1}$, and 
\[
g^B_\ast(a')_i=f^B_\ast(a)_i \in L''_i \tensor B^{i+1}.
\]

\myindent We first consider the summand $\varphi(a',a)_0= \bigl(a'_0, a_0 -  (\bs^{-1} \sigma \bs \tensor \id_B ) g^B_\ast(a')_0 \bigr)$. 
Since all of the elements of $B$ appearing 
in the term $a' \in \bigoplus_{i \geq 0} L'_i \tensor B^{i+1}$ are positively graded, 
Eq.\ \ref{eq:Fstar} and Eq.\ \ref{eq:fB} imply that
\begin{equation} \label{eq:new:MCpb2}
g^B_\ast(a')_0  =(g_1\tensor \id_B)(a'_0) \in L''_{0}\tensor B^1.
\end{equation}
Recall from the definition of $\sigma \maps \bigoplus_{i \geq 1} \bs L''_i \to \bigoplus_{i \geq 1} \bs L_i$ in Eq.\ \ref{eq:sigma} that $\sigma \bs \vert_{L''_0} = \sigma \vert_{\bs L''_1}=0$. Hence, we deduce from Eq.\ \ref{eq:new:MCpb2} that
\begin{equation} \label{eq:new:MCpb3} 
\pr^B \varphi(a',a)_0 = a_0.
\end{equation}
On the other hand, the morphism $f$ is strict by hypothesis, and so from Eq.\ \ref{eq:new:MCpb3} we have
\[
(f_1 \tensor \id_B)\pr^B \varphi(a',a)_0= f^B_{\ast}(a)_0. 
\]
Therefore, by combining the above equality with Eq.\ \ref{eq:new:MCpb2}, we obtain:
\begin{equation} \label{eq:new:MCpb4} 
(f_1 \tensor \id_B)\pr^B \varphi(a',a)_0 = g^B_{\ast}(a')_0 = (g_1\tensor \id_B)\pr'^B \varphi(a',a)_0.  
\end{equation}
It follows from the definition of $\bs \ti{L}$ in Eq.\ \ref{eq:Ltilde} that $\ti{L}_0$ is the pullback of the linear maps $f_1$ and $g_1$. Therefore, since the functor $-\tensor_\kk B^1$ is exact, we conclude from Eq.\ \ref{eq:new:MCpb4}  that $\varphi(a',a)_0 \in \ti{L}_0 \tensor B^1.$

\myindent Now we consider the summands $\varphi(a',a)_{i} \in (L'_i \oplus L_i) \tensor B^{i+1}
$ for $i \geq 1$. 
It follows from the definition $\ti{L}_{i \geq 1}$ that it is sufficient to verify that
\begin{equation*} 
\pr^B \varphi(a',a)_{i} = a_i - (\bs^{-1} \sigma \bs \tensor \id_B ) g^B_\ast(a')_i
\in \ker f_1 \tensor B.
\end{equation*}
Again, $f^B_\ast=f_1 \tensor \id_B$ by hypothesis. Therefore:
\[
\begin{split}
(f_1 \tensor \id_B) \Bigl( \pr^B \varphi(a',a)_{i} \Bigr) &= f^B_\ast(a)_i
- (f_1\bs^{-1} \sigma \bs \tensor \id_B ) g^B_\ast(a')_i\\
&= f^B_\ast(a)_i - g^B_\ast(a')_i\\
&=0,
\end{split}
\]
where the last two equalities follow, respectively, from the definition of $\sigma$ 
in Eq.\ \ref{eq:sigma}, and the fact that $(a',a)$ is an element of the pullback of \eqref{eq:new:MCpb1}.

\myindent Hence, we conclude that $\varphi(a',a) \in (\ti{L} \tensor B)_{-1}.$

\item \underline{Step (ii)}: $\im \varphi \vert_E \sse \MC(\ti{L} \tensor B)$.
\myspace

\myindent To begin with, note that the pullback $E$ in Eq.\ \ref{eq:E} is the equalizer of the functions
\begin{equation*}
\begin{tikzpicture}[descr/.style={fill=white,inner sep=2.5pt},baseline=(current  bounding  box.center)]
\matrix (m) [matrix of math nodes, row sep=2em,column sep=5em,
  ampersand replacement=\&]
  {  
(L' \tensor B)_{-1} \times_{(L'' \tensor B)_{-1}} (L \tensor B)_{-1} \&
(L' \tensor B)_{-2} \times (L \tensor B)_{-2} \\
}; 

   \path[->,font=\scriptsize] 
($(m-1-1.east)+(0,0.25)$) edge node[above] {$\curv^{\prime B} \times \curv^B$} ($(m-1-2.west)+(0,0.25)$) 
($(m-1-1.east)+(0,-0.25)$) edge node[below] {$0$} ($(m-1-2.west)+(0,-0.25)$) 
;   
\end{tikzpicture}
\end{equation*} 
where $\curv^B$ and $\curv^{\prime B}$ are the curvature functions \eqref{eq:curv} for the tame $L_\infty$-algebras $L \tensor B$ and $L' \tensor B$, respectively. 

\myindent Suppose $(a',a) \in E$. 
We wish to show that  $\widetilde{\curv^B}(\varphi(a',a))=0$,
where $\widetilde{\curv^B}$ is the curvature function for
$\ti{L}\tensor B$.
Let $\delta_\oplus$ denote the codifferential that encodes the product $L_\infty$-structure on $L' \oplus L$. Then $(a',a) \in E$ implies that  
\begin{equation} \label{eq:MC-pb1}
\curv^B_\oplus(a',a) =  \bigl (\curv^{\prime B}(a'),\curv^B(a) \bigr) = 0.
\end{equation}
As in Claim \ref{claim:2}, let $\ti{\delta}=J \circ  \delta_\oplus
\circ  H$ denote the codifferential encoding the $L_\infty$ structure
on $\ti{L}$, and let $\ti{\delta}^B$ denote the induced codifferential 
for the $L_\infty$ structure on $\ti{L} \tensor B$. 
Passing to the completions, Eq.\ \ref{eq:exp1} implies that
\begin{equation} \label{eq:MC-pb2}
\ti{\delta}^B \bigl( \exp (\bs \varphi(a',a) ) - 1 \bigr) = \exp(\bs \varphi(a',a))(\bs \widetilde{\curv}^B(\varphi(a',a))),
\end{equation}
while Eq.\ \ref{eq:exp3} gives us
\begin{equation} \label{eq:MC-pb3}
\exp (\bs \varphi(a',a) ) - 1  = \exp (\bs j^B_\ast(a',a) ) - 1 = J^B \bigl (\exp (\bs (a',a) ) - 1 \bigr).
\end{equation}
A straightforward calculation shows that $\ti{\delta}^B = J^B \circ \delta^B_\oplus \circ H^B$.
Therefore, by combining Claim \ref{claim:1} with Eq.\ \ref{eq:MC-pb3} we obtain:
\begin{align*}
\ti{\delta}^B \bigl( \exp (\bs \varphi(a',a) ) - 1 \bigr)&= J^B
\delta^B_{\oplus} \bigl( (\exp (\bs (a',a) ) - 1 \bigr)\\
& = J^B \bigl (\exp(\bs (a',a))(\bs \curv^B_\oplus((a',a))) \bigr).
\end{align*}
It then follows from the above equality and Eq.\ \ref{eq:MC-pb1} that $\ti{\delta}^B \bigl( \exp (\bs \varphi(a',a) ) - 1 \bigr) =0$. Equation \ref{eq:MC-pb2} then implies that
 $\widetilde{\curv^B}(\varphi(a',a))=0$, and so $\varphi(a',a) \in \MC(\ti{L} \tensor B)$. 

\myindent Hence, we conclude that the function $\varphi  \vert_E \maps E \to \MC(\ti{L} \tensor B)$ well-defined.

\end{pfsteps}


Finally, we verify that $\varphi \vert_E$ is the inverse to $h^B_\ast$. Keeping in mind Remark \ref{rmk:coalg_tame}, it is straightforward to show that the functoriality described in Prop.\ \ref{prop:MC} and Lemma \ref{lem:MC_CE} holds for
the assignment of the graded coalgebra morphisms 
$H,J \maps  \S( \bs L' \oplus \bs L) \to \S( \bs L' \oplus \bs L)$ 
to the functions:
\[
h^B_{\ast}, j^{B}_\ast \maps (L'\tensor B \oplus  L \tensor B)_{-1} 
\to (L'\tensor B \oplus  L \tensor B)_{-1}.
\]
Hence, Claim \ref{claim:1} implies that $ h^B_{\ast}\circ  j^{B}_\ast =
j^B_{\ast}\circ h^{B}_\ast = \id_{(L'\tensor B \oplus  L \tensor B)_{-1}}$.
It then follows from the definition of $\varphi$ in Eq.\ \ref{eq:varphi}
that $\varphi \vert_{E}$ is the inverse of $h^{B}_{\ast}$.

\myindent This concludes the proof of the proposition in the special case when $f$ is a strict fibration.

\item \underline{Case 2}: $f \maps (L,\el) \to (L'',\el'')$  is an arbitrary fibration.
\myspace

\myindent First, we factor $f$ into a strict fibration followed by an isomorphism. Indeed,  
by Lemma \ref{lem:strict_fib}, there exists a Lie $n$-algebra $(\hat{L},\hat{\el})$ and an 
isomorphism $\psi \maps (\hat{L},\hat{\el}) \xto{\cong} (L,\el)$ such that
\[
f\psi \maps (\ha{L},\ha{\el}) \to (L'',\el'') 
\]
is a strict fibration with  $f\psi=(f\psi)_1 = f_1$. Let $g \maps (L',\el') \to (L'',\el'')$ be a morphism in $\lnaft$. As in the proof of Cor.\ \ref{cor:fib_pback}, we obtain a pair of pullback squares in $\lnaft$:
\[
\begin{tikzpicture}[descr/.style={fill=white,inner sep=2.5pt},baseline=(current  bounding  box.center)]
\matrix (m) [matrix of math nodes, row sep=2em,column sep=3em,
  ampersand replacement=\&]
  {  
(\ti{L},\ti{\el}) \& (\ha{L}, \hat{\el}) \\
(L_P,\el_P)  \& (L, \el) \\
(L',\el') \& (L'', \el'')\\
}; 
  \path[->,font=\scriptsize] 
   (m-1-1) edge node[auto] {$\pr h$} (m-1-2)
   (m-1-1) edge node[auto] {$\widetilde{\psi}$} node[sloped,below] {$\cong$} (m-2-1)
   (m-1-2) edge node[auto] {$\psi$} node[sloped,below] {$\cong$} (m-2-2)
   (m-2-1) edge node[auto] {$$} (m-2-2)
   (m-2-1) edge node[auto] {$q$} (m-2-2)
   (m-2-1) edge node[auto,swap] {$q'$} (m-3-1)
   (m-2-2) edge node[auto] {$f$} (m-3-2)
   (m-3-1) edge node[auto] {$g$} (m-3-2)
   (m-1-1) edge [bend right=60]  node[auto,swap] {$\pr'h$} (m-3-1)
   (m-1-2) edge [bend left=60]  node[auto] {$f\psi=f_1$} (m-3-2)
  ;

  \begin{scope}[shift=($(m-1-1)!.4!(m-2-2)$)]
  \draw +(-0.25,0) -- +(0,0)  -- +(0,0.25);
  \end{scope}

  \begin{scope}[shift=($(m-2-1)!.4!(m-3-2)$)]
  \draw +(-0.25,0) -- +(0,0)  -- +(0,0.25);
  \end{scope}
\end{tikzpicture}
\]
where $(\ti{L},\ti{\el})$ is now the pullback of $g$ and the strict fibration $f\psi \maps (\ha{L},\ha{\el}) \to (L'',\el'')$, and $(L_P,\el_P)$ is the pullback of $g$ and $f$.
Let 
\[
E: = \MC(L' \tensor B) \times_{\MC(L'' \tensor B)}  \MC(\ha{L} \tensor B)
\]
be the pullback of the diagram of sets 
$\MC(L' \tensor B) \xto{g^B_\ast} \MC(L'' \tensor B) \xleftarrow{(f\psi)^B_\ast} \MC (\ha{L} \tensor B).$
Then our result from Case 1 implies that there exists a bijection $\vphi \vert_E \maps E \xto{\cong} \MC(\ti{L} \tensor B)$ such that the following diagram commutes:
\begin{equation} \label{diag:new:MCpb3}
\begin{tikzpicture}[descr/.style={fill=white,inner sep=2.5pt},baseline=(current  bounding  box.center)]
\matrix (m) [matrix of math nodes, row sep=2em,column sep=4em,
  ampersand replacement=\&]
  {  
E \& [-0.5 cm] \& \\ [-0.5cm]
\& \MC(\ti{L}\tensor B) \& \MC(\ha{L} \tensor B) \\
\& \MC(L_P \tensor B) \& \MC(L \tensor B)\\
\& \MC(L'\tensor B) \& \MC(L''\tensor B)\\
}; 
  \path[->,font=\scriptsize] 
   (m-2-2) edge node[auto] {$(\pr h)^B_\ast$} (m-2-3)
   (m-2-2) edge node[auto,swap] {$$} (m-3-2)
   (m-2-2) edge node[auto] {$\widetilde{\psi}^B_\ast$} node[sloped,below] {$\cong$} (m-3-2)
   (m-2-3) edge node[auto] {$\psi^B_\ast$} node[sloped,below] {$\cong$}  (m-3-3)
   (m-3-3) edge node[auto] {$f^B_\ast$}   (m-4-3)
   (m-3-2) edge node[auto,swap] {$q'^B_\ast$}   (m-4-2)
   (m-3-2) edge node[auto] {$q^B_\ast$} (m-3-3)
   (m-4-2) edge node[auto] {$g^B_\ast$} (m-4-3)
   (m-1-1) edge node[auto] {$\vphi \vert_E$} node[sloped,below] {$\cong$} (m-2-2)
   (m-1-1) edge [bend left=20]  node[auto] {$$} (m-2-3)
   (m-1-1) edge [bend right=40]  node[auto,swap] {$$} (m-4-2)
  ;

\end{tikzpicture}
\end{equation}

Now, let 
\[
E_P: = \MC(L' \tensor B) \times_{\MC(L'' \tensor B)}  \MC(L \tensor B)
\]
be the pullback of the diagram of sets 
$\MC(L' \tensor B) \xto{g^B_\ast} \MC(L'' \tensor B) \xleftarrow{f^B_\ast} \MC (L \tensor B)$, and
let 
\[
\varphi_{E_P} \maps E_P \to \MC(L_P \tensor B)
\]
be the function
\begin{equation} \label{eq:varphiPB}
\begin{split}
\varphi_{E_P}(a',a)&:= (\widetilde{\psi}^{B}_{\ast} \circ \vphi \vert_E) \bigl( a', (\psi^{B}_{\ast})^{-1}(a) \bigr)\\
&~ =(\widetilde{\psi}^{B}_{\ast} \circ j^B_\ast ) \bigl( a', (\psi^{B}_{\ast})^{-1}(a) \bigr).
\end{split}
\end{equation}
Note that $\vphi_{E_P}$ is a bijection by construction. Using diagram
\eqref{diag:new:MCpb3}, it is easy to see that $q^B_\ast
\varphi_{E_P}(a',a)= a$ and $q'^B_\ast\varphi_{E_P}(a',a)= a'$. 
Hence, we conclude that \eqref{diag:MC-pullbacks} is a pullback square, and this completes the proof of the proposition.

\end{pfcases}

\end{proof}

\subsection{Maurer-Cartan sets with differentiable structure} \label{sec:MC_smooth}
We now consider the scenario in which all of the Maurer-Cartan sets in Prop.\ \ref{prop:MC-pullbacks} have geometric structure. Specifically, we are interested in the case when the geometry arises from a dg Banach algebra structure on $(B,d_B)$. Examples relevant to our applications in \cite{Rogers-Zhu:2018} include the cdgas $\Omega(\Delta^n)$ and $\Omega(\Lambda^{k}_{j})$: the dg Banach algebras of $r$-times continuously differentiable forms on the geometric $n$-simplex $\Delta^n$ and
the geometric horn $\Lambda^{k}_{j} \subseteq \Delta^k$, respectively. (See Sec.\ 5.1 of \cite{Henriques:2008}). 

So let $\kk = \R$ and suppose $(B,d_B) \in \bcdga$ is a fixed cdga equipped with the structure of a dg Banach algebra. If $(L,\el) \in \lnaft$, then since $L$ is finite type, the structure on $B$ naturally makes $(L \tensor B)$ into a graded Banach space. From Eqs.\ \ref{eq:curv} and \ref{eq:elB},   
we see that the curvature $\curv^{B} \maps (L \tensor B)_{-1} \to (L \tensor B)_{-2}$ is a polynomial and hence a smooth function between Banach manifolds, in the sense of \cite[Ch.\ I,3]{Lang:95}. 
Similarly, if $f \maps (L,\el) \to (L',\el')$ is a morphism in $\lnaft$, then it follows from 
Eq.\ \ref{eq:Fstar} and Eq.\ \ref{eq:fB} that $f^{B}_\ast \maps (L \tensor B)_{-1} \to (L' \tensor B)_{-1}$ is a smooth function.

We now restrict our focus to those $(L,\el) \in \lnaft$ which satisfy the following assumption:
\begin{ass} \label{ass:1}
The Maurer-Cartan set $\MC(L \tensor B) \sse (L \tensor B)_{-1}$ is a Banach submanifold \cite[Ch.\ II, 2]{Lang:95} and therefore  
\[
\begin{tikzpicture}[descr/.style={fill=white,inner sep=2.5pt},baseline=(current  bounding  box.center)]
\matrix (m) [matrix of math nodes, row sep=2em,column sep=2em,
  ampersand replacement=\&]
  {  
\MC({L}\tensor B) \&  ({L}\tensor B)_{-1} \& ({L}\tensor B)_{-2}\\
}; 
   \path[->,font=\scriptsize] 
($(m-1-2.east)+(0,0.25)$) edge node[above] {${\curv^B}$} ($(m-1-3.west)+(0,0.25)$) 
($(m-1-2.east)+(0,-0.25)$) edge node[below] {$0$} ($(m-1-3.west)+(0,-0.25)$) 
;   
   \path[right hook->,font=\scriptsize] 
   (m-1-1) edge node[auto] {$$} (m-1-2);
\end{tikzpicture}
\]
is an equalizer diagram in the category of Banach manifolds.
\end{ass}

\begin{cor} \label{cor:MC-pullbacks}
Suppose $B \in \bcdga$ has the structure of a dg Banach algebra. 
Let $f \maps (L,\el) \to (L'',\el'')$ be a fibration and $g \maps (L',\el') \to (L'',\el'')$ be a morphism in $\lnaft$, and let $(L_P,\el_P)$ 
be the pullback of the diagram
$(L',\el') \xto{g} (L'',\el'') \xleftarrow{f} (L,\el)$.

Assume that all of the aforementioned Lie $n$-algebra satisfy assumption \eqref{ass:1}.
If the pullback of the diagram 
\[
\MC(L' \tensor B) \xto{g^B_{\ast}} \MC(L'' \tensor B)
\xleftarrow{f^{B}_{\ast}} \MC(L \tensor B)
\]
exists as a Banach manifold, then the induced commutative diagram
\[
\begin{tikzpicture}[descr/.style={fill=white,inner sep=2.5pt},baseline=(current  bounding  box.center)]
\matrix (m) [matrix of math nodes, row sep=2em,column sep=3em,
  ampersand replacement=\&]
  {  
\MC(L_P \tensor B) \& \MC({L} \tensor B) \\
\MC(L' \tensor B) \& \MC(L'' \tensor B)\\
}; 
  \path[->,font=\scriptsize] 
   (m-1-1) edge node[auto] {$$} (m-1-2)
   (m-1-1) edge node[sloped,below] {$$} (m-2-1)
   (m-1-2) edge node[auto] {$f^B_\ast$}  (m-2-2)
   (m-2-1) edge node[auto] {$g^B_\ast$} (m-2-2)
  ;


\end{tikzpicture}
\]
is a pullback square in the category of Banach manifolds.
\end{cor}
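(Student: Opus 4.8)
The plan is to deduce the smooth statement from its set-theoretic counterpart, Prop.\ \ref{prop:MC-pullbacks}, by showing that the canonical comparison map into the (assumed) Banach-manifold pullback is a diffeomorphism. Write $P$ for the Banach manifold pullback of $\MC(L' \tensor B) \xto{g^B_\ast} \MC(L'' \tensor B) \xleftarrow{f^B_\ast} \MC(L \tensor B)$, which exists by hypothesis, and let $\theta \maps \MC(L_P \tensor B) \to P$ be the map obtained from the universal property of $P$ applied to the two structure maps out of $\MC(L_P \tensor B)$. Since those structure maps are of the form \eqref{eq:Fstar}, the discussion preceding Assumption \ref{ass:1} shows they are polynomial, hence smooth; so $\theta$ is a morphism of Banach manifolds, and it suffices to prove that $\theta$ is a bijection with smooth inverse.

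That $\theta$ is a bijection on underlying sets is nearly immediate. Evaluating the universal property of $P$ against the one-point Banach manifold shows that the underlying set of $P$ is exactly the set-theoretic fiber product $\MC(L' \tensor B) \times_{\MC(L'' \tensor B)} \MC(L \tensor B)$: injectivity holds because the projection pair out of a pullback is a monomorphism, and surjectivity holds because a compatible pair of points is precisely a cone from the one-point manifold. By Prop.\ \ref{prop:MC-pullbacks} this same set is canonically $\MC(L_P \tensor B)$, and under these identifications $\theta$ is the identity on points; in particular it is bijective.

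The crux is the smoothness of $\theta^{-1}$, and here Assumption \ref{ass:1} is essential. I would first treat the case where $f = f_1$ is a strict fibration, in which Prop.\ \ref{prop:MC-pullbacks} produces the set-inverse explicitly as the restriction of $\varphi(a',a) = j^B_\ast(a',a)$ from \eqref{eq:varphi}. The function $j^B_\ast$ is the map \eqref{eq:Fstar} attached to the \emph{coalgebra} morphism $J$ of \eqref{eq:J}; by Remark \ref{rmk:coalg_tame} compatibility with the codifferential is not needed, and since $L,L'$ are finite type and $B$ is bounded it is a finite polynomial, hence a smooth map of ambient Banach spaces $(L' \tensor B)_{-1} \times (L \tensor B)_{-1} \to (L' \tensor B \oplus L \tensor B)_{-1}$. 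The inclusion $P \hookrightarrow (L' \tensor B)_{-1} \times (L \tensor B)_{-1}$ is smooth (its components are the smooth projections out of $P$ followed by the submanifold inclusions of Assumption \ref{ass:1}), so $\varphi$ restricts to a smooth map $P \to (L_P \tensor B)_{-1}$, and the two-step verification in the proof of Prop.\ \ref{prop:MC-pullbacks} shows its image lies in $\MC(L_P \tensor B)$. Since the latter is a Banach \emph{submanifold} by Assumption \ref{ass:1}, the standard fact that a smooth map landing in a submanifold corestricts to a smooth map (\cite[Ch.\ II]{Lang:95}) shows $\theta^{-1}$ is smooth.

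For an arbitrary fibration $f$ I would reduce to the strict case exactly as in Case 2 of the proof of Prop.\ \ref{prop:MC-pullbacks}: Lemma \ref{lem:strict_fib} furnishes an isomorphism $\psi \maps (\ha{L},\ha{\el}) \xto{\cong} (L,\el)$ strictifying $f$, and the induced maps $\psi^B_\ast$ and $\widetilde{\psi}^B_\ast$ are polynomial bijections with polynomial, hence smooth, inverses coming from $\psi^{-1}$, i.e.\ diffeomorphisms. The set-inverse of $\theta$ is then the composite $\varphi_{E_P}$ of \eqref{eq:varphiPB}, a composition of the smooth maps just produced, so it too is smooth. Hence $\theta$ is a smooth bijection with smooth inverse, i.e.\ a diffeomorphism, and the square is a pullback in Banach manifolds. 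I expect the only real obstacle to be the careful bookkeeping of the submanifold structures — verifying that the inclusions $P \hookrightarrow (L'\tensor B)_{-1} \times (L \tensor B)_{-1}$ and $\MC(L_P \tensor B) \hookrightarrow (L_P \tensor B)_{-1}$ are genuine submanifold embeddings, which is precisely what Assumption \ref{ass:1} supplies — so that the elementary ``smooth into a submanifold is smooth'' principle of \cite{Lang:95} may be legitimately invoked; everything else reduces to the observation that the comparison map and its explicit inverse are polynomial.
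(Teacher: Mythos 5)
Your proposal is correct and follows essentially the same route as the paper: both deduce the result from the explicit set-theoretic bijection of Prop.\ \ref{prop:MC-pullbacks} (via $\varphi_{E_P}$, reducing to the strict case by Lemma \ref{lem:strict_fib}), and both observe that this bijection and its inverse are built from polynomial maps between Banach spaces and their equalizers, so that Assumption \ref{ass:1} guarantees they respect the differentiable structures. Your write-up merely packages this as a comparison map $\theta$ into the assumed Banach-manifold pullback and spells out the ``smooth map into a submanifold'' corestriction step, which the paper leaves implicit.
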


\begin{proof}
First, we note that the underlying set of the pullback in the category of Banach manifolds is the usual fiber product \cite[Ch.II,2]{Lang:95}. Therefore, by hypothesis, the set $E_P: = \MC(L' \tensor B) \times_{\MC(L'' \tensor B)}  \MC(L \tensor B)$ admits the structure of a manifold. 

In the proof of Prop.\ \ref{prop:MC-pullbacks}, we explicitly constructed 
in Eq.\ \ref{eq:varphiPB} a bijection 
\[
\varphi_{E_P} \maps E_P \xto{\cong} \MC(L_P\tensor B)
\]  
which identified $\MC(L_P\tensor B)$ as the pullback in 
the category of sets. We observe that all functions appearing in the construction of $\varphi_{E_P}$, its inverse, and the relevant pullback diagrams are either:
\begin{enumerate}
\item{polynomial functions between Banach spaces of the form $(V\tensor B)_{-1}$, 
where $V$ is a finite-dimensional graded vector space, or }  

\item {polynomial functions between equalizers of polynomial functions 
of the above type.}

\end{enumerate}
By assumption \eqref{ass:1}, the equalizers themselves are submanifolds of 
Banach spaces. Hence, the function $\varphi_{E_P}$ respects the differentiable structures, as does its inverse. Therefore, $\varphi_{E_P}$ is a diffeomorphism.
\end{proof}

We end this section with some remarks concerning the validity of our assumption \eqref{ass:1}. 
It is not difficult to construct Lie $n$-algebras $(L,\el)$ and dg Banach algebras $(B,d_B)$  such that $\MC(L \tensor B)$ fails to satisfy this assumption. The following is a simple example.

\begin{example}
Let $L= L_1 \oplus L_2$ denote the  graded vector space: 
\begin{align*}
L_1 &:= \R e_1 \oplus \R e_2, & L_2 := \R\ti{e},\\
&\deg{e_1}=\deg{e_2}=1, &\deg{\ti{e}}=2.
\end{align*}
Equip $L$ with the degree 0 graded skew-symmetric bracket $\el_2 \maps L \tensor L \to L$
whose non-trivial values on generators are
\[
\el_2(e_1,e_1)=\ti{e}, \quad \el_2(e_2,e_2)=-\ti{e}.
\]  
It is easy to see that $\el_2$ satisfies the graded Jacobi identity. Therefore, $\el_2$ is an honest Lie bracket, and $(L,\el_2)$ is a Lie $3$-algebra.

Next, let $(B,d_B)$ denote the cdga $B:=\R[\theta] / (\theta^3)$, with $\deg{\tha}=2$, and trivial differential $d_B=0$. As a graded vector space, $B=B_0 \oplus B_2 \oplus B_4$, where 
\[
B_0 = \R, \quad B_2 = \R\tha, \quad  B_4=\R\tha^2.
\]
Since $B$ is a finite-dimensional $\R$-algebra, it admits the structure of a Banach algebra (by  embedding into $\End(\R^3)$, for example). Hence, the degree $-1$ piece of the tame $L_\infty$-algebra $(L \tensor B, \el^B_2)$ is just the Euclidean plane:
\[
(L \tensor B)_{-1} = \R (e_1 \tensor \tha) ~ \oplus ~ \R (e_2 \tensor \tha).
\]
If $a = xe_1 \tensor \tha + ye_2 \tensor \tha \in (L \tensor B)_{-1}$, then $\curv^B(a)=0$ if and only if
\[
\frac{1}{2}(x^2-y^2)(\ti{e}\tensor \tha^2)=0.
\]
Therefore $\MC(L\tensor B)$ is the zero locus of the polynomial $f(x,y)=x^2-y^2$, and so it
is not a submanifold of $(L \tensor B)_{-1} \cong \R^2$.
\end{example}
\myspace
On the other hand, the assumption \eqref{ass:1} is always satisfied in our main application of interest, when $B=\Omega(\Delta^n)$. Indeed, this follows from a result of
Henriques' \cite[Thm.\ 5.10]{Henriques:2008} and a result of \v{S}evera and \v{S}ira\v{n} \cite[Prop.\ 4.3]{Severa-Siran}. Furthermore, the hypothesis in Cor.\ \ref{cor:MC-pullbacks} concerning the existence of the pullback of Maurer-Cartan spaces is satisfied if the morphism $f \maps (L,\el) \to (L',\el')$ is a ``quasi-split fibration'' 
\cite[Sec.\ 6]{Rogers-Zhu:2018}. This is a special kind of fibration which we discuss in the next section.

\section{Postnikov towers for Lie $n$-algebras} \label{sec:postnikov}
In this last section, we analyze the functorial aspects of 
Henriques' Postnikov construction for Lie $n$-algebras \cite{Henriques:2008}. We show that in certain cases the Postnikov tower admits a convenient functorial decomposition. We use this in \cite{Rogers-Zhu:2018} to prove that the integration functor sends a certain distinguished class of fibrations in $\lnaft$ to fibrations between simplicial Banach manifolds. We call these distinguished fibrations ``quasi-split''.

\begin{definition}\label{def:split_fib}
A fibration of Lie $n$-algebras $f \maps (L,\el) \to (L',\el')$ is a \textbf{quasi-split fibration}
if: 
\begin{enumerate}

\item the induced map in homology $H(f_1) \maps H(L) \to H(L')$ is surjective in all degrees and,

\item 
$H_0(L) \cong \ker H_0(f_1) \oplus H_0(L')$ in the category of Lie algebras.
\end{enumerate}
\end{definition}

Note that every acyclic fibration in $\lnaft$ is a quasi-split fibration. More generally, $f \maps (L,\el) \to (L',\el')$ is a quasi-split fibration if $\ker H(f_1)$ is central and
$H(f_1) \maps H(L) \to H(L')$ is a split epimorphism in the category of $H_0(L)$-modules.

The string Lie 2-algebra $(\g \oplus \R[-1],\{\el_1,\el_2,\el_3\})$ associated to a simple Lie algebra $\g$ of compact type was the original motivation for Henriques' work in \cite{Henriques:2008}. It sits in a quasi-split fiber sequence of the form
\[
\begin{tikzpicture}[descr/.style={fill=white,inner sep=2.5pt},baseline=(current  bounding  box.center)]
\matrix (m) [matrix of math nodes, row sep=2em,column sep=3em,
  ampersand replacement=\&]
  {  
\R \& \R \& 0 \\
0 \& \g \& \g\\
}
; 
  \path[->,font=\scriptsize] 
   (m-1-1) edge node[auto] {$\id$} (m-1-2)
   (m-1-2) edge node[auto] {$$} (m-1-3)
   (m-1-1) edge node[auto] {$$} (m-2-1)
   (m-2-1) edge node[auto] {$$} ($(m-2-2.west)+(0,0.05)$)
   (m-1-3) edge node[auto] {$$} (m-2-3)
   (m-2-2) edge node[auto] {$$} (m-2-3)
   (m-1-2) edge node[auto] {$\el_1=0$} (m-2-2)
  ;
\end{tikzpicture}
\]
This is a special case of a more general fiber sequence called a "central $n$-extension". (See \cite[Sec.\ 6]{Rogers-Zhu:2018}.)

\subsection{Morphisms between towers}
Let $(L,\el)$ be a Lie $n$-algebra. Following \cite[Def.\ 5.6]{Henriques:2008},
we consider two different truncations
of the underlying chain complex $(L,d=\el_1)$. 
For any $m \geq 0$, denote by $\tau_{\leq m}L$ and $\tau_{< m}L$ the following $(m+1)$-term complexes:
\begin{equation*} 
\begin{split}
(\tau_{\leq m} L)_i=
\begin{cases}
L_i & \text{if $i < m$,}\\
\coker(d_{m+1}) & \text{if $i=m$,}\\
0 & \text{if $i >m$,}
\end{cases}
\qquad 
(\tau_{< m} L)_i=
\begin{cases}
L_i & \text{if $i < m$,}\\
\im (d_{m}) & \text{if $i=m$,}\\
0 & \text{if $i >m$.}
\end{cases}
\end{split}
\end{equation*}
In degree $m$, the differentials for $\tau_{\leq m}L$ and $\tau_{<  m}L$ are $d_{m} \maps L_m/\im(d_{m+1}) \to L_{m-1}$, and the inclusion $ \im(d_m) \emb L_{m-1}$, respectively. The homology complexes of
$\tau_{\leq m}L$ and $\tau_{<  m}L$ are
\[
H_{i}(\tau_{\leq m}L) = 
\begin{cases}
H_i(L) & \text{if $i \leq m$,}\\
0 & \text{if $i>m$,}
\end{cases}
\qquad
H_{i}(\tau_{< m}L) = 
\begin{cases}
H_i(L) & \text{if $i <  m$,}\\
0 & \text{if $i \geq m$.}
\end{cases}
\]
We have the following surjective chain maps
\begin{equation} \label{eq:projs}
\begin{split}
p_{\leq m} \maps L \to \tau_{\leq m}L \qquad p_{< m} \maps L \to \tau_{< m}L
\end{split}
\end{equation}
where in degree $m$, the map $p_{\leq m}$ is the surjection $L_m \to \coker(d_{m+1})$, and
$p_{< m}$ is the differential $d_{m} \maps L_m \to \im(d_{m})$. There are also the similarly defined
surjective chain maps
\begin{equation} \label{eq:projs1}
\begin{split}
q_{\leq m} \maps \tau_{\leq m}L \to \tau_{< m}L, \quad  q_{< m+1} \maps \tau_{< m +1}L \xto{\sim} \tau_{\leq m}L. 
\end{split}
\end{equation}
The map $q_{\leq m}$ in degree $m$ is the differential $d_m \maps \coker{d_{m+1}} \to \im d_m$, and the identity in all other degrees. The map $q_{<m+1}$ is the projection $L_m \to \coker d_{m+1}$ in degree $m$, the identity in all degrees $<m$, and the zero map in degree $m+1$.  
We note that $q_{<m+1}$  is a quasi-isomorphism of complexes.

\begin{proposition} \label{prop:lna_tower}
Let $(L,\el)$ be a Lie $n$-algebra. 
\begin{enumerate}

\item The Lie $n$-algebra structure on $(L,\el)$ induces Lie $(m+1)$-structures on the complexes
$\tau_{\leq m} L$ and $\tau_{<m} L$ whose brackets are given by
\[
\tau_{\leq m} \el_{k}(\bar{x}_1,\ldots,\bar{x}_k):= p_{\leq m}\el_k(x_1,\ldots,x_k), \quad
\tau_{< m} \el_{k}(\bar{y}_1,\ldots,\bar{y}_k):= p_{<m}\el_k(y_1,\ldots,y_k),
\] 
where $\bar{x}_i= p_{\leq m}(x_i)$ and $\bar{y}_i= p_{< m}(y_i)$.

\item The assignments $(L,\el) \mapsto (\tau_{\leq m} L,\tau_{\leq m}\el)$ and
$(L,\el) \mapsto (\tau_{< m} L,\tau_{< m}\el)$ are functorial.

\item An $L_\infty$-morphism $\phi \maps (L,\el) \to (L',\el')$ induces a morphism of towers of Lie $n$-algebras
\begin{equation} \label{diag:tower}
\begin{tikzpicture}[descr/.style={fill=white,inner sep=2.5pt},baseline=(current  bounding  box.center)]
\matrix (m) [matrix of math nodes, row sep=2em,column sep=1.4em,
  ampersand replacement=\&]
  {  
\cdots \tau_{\leq m-1} L \& \tau_{< m-1} L \& \tau_{\leq m-2}L \& ~ \cdots ~ \& 
\tau_{\leq 1 } L \& \tau_{< 1} L \& \tau_{\leq 0} L\\
\cdots \tau_{\leq m-1} L' \& \tau_{< m-1} L' \& \tau_{\leq m-2}L' \& ~ \cdots ~ \& 
\tau_{\leq 1 } L' \& \tau_{< 1} L' \& \tau_{\leq 0} L'\\
};
\path[->,font=\scriptsize] 
(m-1-1) edge node[auto] {$q_{\leq m-1}$} (m-1-2)
(m-1-2) edge node[auto] {$q_{<m-1}$} (m-1-3)
(m-1-3) edge node[auto] {$q_{ \leq m-2}$} (m-1-4)
(m-1-4) edge node[auto] {$$} (m-1-5)
(m-1-5) edge node[auto] {$q_{\leq 1}$} (m-1-6)
(m-1-6) edge node[auto] {$q_{< 1}$} (m-1-7)
(m-2-1) edge node[auto,swap] {$q'_{\leq m-1}$} (m-2-2)
(m-2-2) edge node[auto,swap] {$q'_{<m-1}$} (m-2-3)
(m-2-3) edge node[auto,swap] {$q'_{ \leq m-2}$} (m-2-4)
(m-2-4) edge node[auto] {$$} (m-2-5)
(m-2-5) edge node[auto,swap] {$q'_{\leq 1}$} (m-2-6)
(m-2-6) edge node[auto,swap] {$q'_{< 1}$} (m-2-7)

(m-1-1) edge node[auto,swap] {$$} (m-2-1)
(m-1-2) edge node[auto,swap] {$\tau_{< m-1}\phi$} (m-2-2)
(m-1-3) edge node[auto,swap] {$\tau_{\leq m-2}\phi$} (m-2-3)
(m-1-5) edge node[auto,swap] {$\tau_{\leq 1}\phi$} (m-2-5)
(m-1-6) edge node[auto,swap] {$\tau_{< 1}\phi$} (m-2-6)
(m-1-7) edge node[auto,swap] {$\tau_{\leq 0}\phi$} (m-2-7)
;
\end{tikzpicture}
\end{equation}
in which the horizontal arrows are the strict $L_\infty$-morphisms induced by the surjective chain maps \eqref{eq:projs1}.

\end{enumerate}
\end{proposition}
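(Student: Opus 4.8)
The plan is to realize every truncation as a quotient of $(L,\el)$ by a graded subspace that is closed under all the brackets, and to organize the whole tower through the inclusions between these subspaces.

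For part (1), set $K_{\leq m} := \ker(p_{\leq m}\maps L \to \tau_{\leq m}L)$ and $K_{<m} := \ker(p_{<m} \maps L \to \tau_{<m}L)$; concretely $K_{\leq m}$ is $\im d_{m+1}$ in degree $m$ and all of $L_i$ for $i>m$, while $K_{<m}$ is $\ker d_m$ in degree $m$ and all of $L_i$ for $i>m$. First I would show each is an \emph{$L_\infty$-ideal}, i.e.\ $\el_k(x_1,\dots,x_k)\in K$ whenever some $x_i\in K$. Since $\deg \el_k = k-2$ and $L$ is non-negatively graded, this is almost entirely a degree count: if the $K$-argument sits in degree $>m$, or in degree $m$ with $k\geq 3$, or in degree $m$ inside an $\el_2$ whose partner has positive degree, then the output already lies in degrees $>m$, hence in $K$; the cases $k=1$ are immediate from the chain-complex relations. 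The single genuinely nontrivial case is $\el_2(a,d_{m+1}w)$ with $a\in L_0$, which I would handle using the $m=2$ instance of \eqref{eq:Jacobi}: as $\el_1 a=0$ for degree reasons, that identity collapses to $\el_2(a,d_{m+1}w)=\pm d_{m+1}\bigl(\el_2(a,w)\bigr)\in \im d_{m+1}$, and the analogous computation puts $\el_2(a,\kappa)\in\ker d_m$ when $\kappa\in\ker d_m$. Once $K$ is an ideal, the brackets descend to $L/K$ by the stated formulas, the Jacobi identities transfer because $p$ is a surjection intertwining $\el$ with the descended brackets, and $p$ is by construction a strict $L_\infty$-morphism (Sec.~\ref{sec:strict}). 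Since $L/K_{\leq m}\cong\tau_{\leq m}L$ and $L/K_{<m}\cong\tau_{<m}L$ are concentrated in degrees $0,\dots,m$, they are Lie $(m+1)$-algebras.

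For part (2), given $\phi\maps (L,\el)\to(L',\el')$ I would define $\tau_{\leq m}\phi$ as the descent of $p'_{\leq m}\circ\phi$ along $p_{\leq m}$; the point is to verify this descent exists as a \emph{weak} $L_\infty$-morphism, not merely a chain map. Well-definedness of its structure maps amounts to $\phi_k(K_{\leq m},\dots)\sse K'_{\leq m}$, and because $\deg\phi_k=k-1$ is one higher than $\deg\el_k$, this time the case $k\geq 2$ is settled purely by degree counting (the output falls in degrees $>m$) and the case $k=1$ by the chain-map property of $\phi_1$. To produce the morphism itself I would pass to the coalgebra picture: $P_{\leq m}\maps \S(\bs L)\epi \S(\bs \tau_{\leq m}L)$ is a surjective dg coalgebra map, and the well-definedness just shown says the corestriction of $P'_{\leq m}\Phi$ vanishes on $\ker P_{\leq m}$; hence by cofreeness $P'_{\leq m}\Phi$ factors as $\Psi P_{\leq m}$ for a unique graded coalgebra map $\Psi$, and $\Psi\delta=\delta'\Psi$ follows by cancelling the epimorphism $P_{\leq m}$ in $\Psi\delta P_{\leq m}=P'_{\leq m}\Phi\delta=\delta'P'_{\leq m}\Phi=\delta'\Psi P_{\leq m}$. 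The same argument produces $\tau_{<m}\phi$, and functoriality (part (2)) is then immediate from the uniqueness of these descents.

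For part (3), the key observation is that the ideals nest, $\cdots\sse K_{<m+1}\sse K_{\leq m}\sse K_{<m}\sse K_{\leq m-1}\sse\cdots$, each inclusion a one-line check using $\im d_{j+1}\sse\ker d_j$. Consequently $\tau_{<m}L$ and $\tau_{\leq m-1}L$ are \emph{further} quotients of $\tau_{\leq m}L$ and $\tau_{<m+1}L$ respectively, the comparison maps $q_{\leq m}$ and $q_{<m+1}$ of \eqref{eq:projs1} are exactly these quotient projections, and being quotients by $L_\infty$-ideals they are strict $L_\infty$-morphisms. Commutativity of each square in \eqref{diag:tower} then follows from the identities $q_{\leq m}p_{\leq m}=p_{<m}$, $\tau_{\leq m}\phi\,p_{\leq m}=p'_{\leq m}\phi$ (and their $q_{<m}$-analogues) together with surjectivity of the $p$'s: both composites around a square, precomposed with the relevant projection from $L$, equal the same descent of $\phi$, and cancelling the epimorphism yields equality. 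I expect the main obstacle to be part (1), namely checking that $K_{\leq m}$ and $K_{<m}$ are stable under \emph{all} the higher brackets, where everything is forced by degree except the single quadratic case genuinely using the $L_\infty$ relation; the secondary subtlety is guaranteeing in part (2) that the induced maps are weak $L_\infty$-morphisms rather than bare chain maps, which is precisely what the coalgebra descent supplies.
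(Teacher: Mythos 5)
Your proposal is correct and follows essentially the same route as the paper: the well-definedness of the truncated brackets reduces by degree counting to the single case of $\el_2$ with arguments in degrees $0$ and $m$, which is resolved by the Leibniz-type instance of \eqref{eq:Jacobi}; the morphism components descend with $k\geq 2$ trivial by degree and $k=1$ by the chain-map property of $\phi_1$; and the tower maps and commuting squares come from the compatibilities $q_{\leq m}p_{\leq m}=p_{<m}$, etc., plus surjectivity. Your packaging via $L_\infty$-ideals and the cofree-coalgebra descent (factoring $P'_{\leq m}\Phi$ through the surjection $P_{\leq m}$) is just a more explicit rendering of steps the paper treats as immediate, not a different argument.
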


\begin{proof}
We prove (1) and (2) for $\tau_{\leq m}L$. The same arguments apply for $\tau_{<m}L$.  
First, we verify that the brackets $\tau_{\leq m} \el_{k}$ are well-defined.
For degree reasons,  the only non-trivial case to check is $\tau_{\leq m} \el_{2}(\bar{x}_1,\bar{x}_2)$ when $\bar{x}_1$ is in degree $m$ and $\bar{x}_2$ is in degree 0. Suppose $x_{1}=x'_{1} + d_{m+1}z$, where $d_{m+1}$ is the differential $\el_1$ in degree $m+1$. The Jacobi-like identities \eqref{eq:Jacobi} for the $L_\infty$-structure imply that the degree 0 bracket $\el_2$ satisfies $\el_1\el_2(x_1,x_2) = \el_2(\el_1x_1,x_2) + \el_2(\el_1x_1,x_2) = \el_2(x_1,\el_1x_2)$. Hence, $\el_2(x_1,x_2) = \el_2(x'_1,x_2)
+ d_{m+1}\el_2(z,x_2)$, and so  $\tau_{\leq m} \el_{2}$ is well-defined. The fact that the brackets $\el_k$ satisfy the identities \eqref{eq:Jacobi} immediately implies that the brackets $\tau_{\leq m} \el_{k}$ satisfy them as well.

Next, let $\phi \maps (L,\el) \to (L',\el')$ be a morphism in $\LnA{n}$.
Define maps $\tau_{\leq m} \phi_k \maps \Lambda^k \tau_{\leq m} L \to \tau_{\leq m}L'$ by
\[
\tau_{\leq m} \phi_k(\bar{x}_1,\ldots,\bar{x}_k):= p'_{\leq m}\phi_k(x_1,\ldots,x_k),
\]
where $p'_{\leq m} \maps L' \to \tau_{\leq m} L'$ is the projection \eqref{eq:projs}.
We verify that these are well-defined. Again, for degree reasons, 
the only non-trivial case to check is $\tau_{\leq m} \phi_1(\bar{x})$ 
with $\bar{x}$ in degree $m$. Recall that $\phi_1$ is a chain map (Remark \ref{rmk:h0}). Hence,
if $x_{1}=x'_{1} + d_{m+1}z$, then $\tau_{\leq m} \phi_1(\bar{x})=\tau_{\leq m} \phi_1(\bar{x'})$.
The fact that the maps $\phi_k$ satisfy the defining equations \eqref{eq:dgmap} immediately implies that the maps $\tau_{\leq m} \phi_{k}$ form an $L_\infty$-morphism $\tau_{\leq m} \phi \maps
(\tau_{\leq m} L, \tau_{\leq m}\el) \to (\tau_{\leq m}L', \tau_{\leq m}\el')$.

For statement (3), since the $L_\infty$ brackets for $\tau_{\leq m}L$ and
$\tau_{<m}L$ are defined using the projection maps \eqref{eq:projs},
it is easy to see that the horizontal projections in the diagram
\eqref{diag:tower} are strict $L_\infty$-morphisms. Since the vertical
morphisms $\tau_{\leq m}
\phi$ and $\tau_{<m}\phi$ are also defined using the projection maps \eqref{eq:projs}, the diagram indeed commutes. 
\end{proof}

\begin{remark} \label{rmk:tau0}
The Lie $n$-algebra $\tau_{\leq 0} L$ is just the Lie algebra $H_{0}(L)$ concentrated in degree zero. Given a morphism of Lie $n$-algebras 
$f \maps (L, \el) \to (L',\el')$, the induced morphism $\tau_{\leq 0} f \maps
H_{0}(L) \to H_{0}(L')$ of Lie algebras is the morphism $H_0(f_1)$ from Remark \ref{rmk:h0}.
\end{remark}

\subsection{A functorial decomposition of  towers} 
Let $f \maps (L,\el) \to (L',\el')$ be a quasi-split fibration \eqref{def:split_fib}. 
Our goal is to decompose the induced morphism between the Postnikov towers
associated to $L$ and $L'$. Let us make two simple initial observations.
First, recall that every fibration in $\LnA{n}$ can be factored into an isomorphism followed by a strict fibration (Prop.\ \ref{prop:strict_factor}). Hence, we restrict our discussion here to strict quasi-split fibrations. Second, it follows directly from the definition that every quasi-split fibration is an $L_\infty$-epimorphism (Def.\ \ref{def:quasi-iso}). Therefore, we present our results below in a slightly more general context for the case when $f$
is a strict $L_\infty$-epimorphism.  

We begin with the following useful lemma. A variation of this result arises in the construction of minimal models for $L_\infty$-algebras. 

\begin{lemma} \label{lem:min_mod1}
Let $f \maps (L, \el) \to (L',\el')$ be an acyclic fibration in $\LnA{n}$.
 Let $(\ker f_1, \el_1)$ denote the kernel of the chain map $f_1 \maps (L, \el_1) \to (L',\el'_1)$ considered as an abelian Lie $n$-algebra (Example \ \ref{ex:lna}). Then there exists a $L_\infty$-morphism
 \[
 r \maps (L, \el) \to (\ker f_1, \el_1)
 \]
such that the morphism induced via the universal property of the product:
\begin{equation*} 
\bigl(f,r \bigr) \maps (L,\el) \to (L' \oplus \ker f_1, \el' \oplus \el_{\ker f})
\end{equation*}
is an isomorphism of Lie $n$-algebras.

\end{lemma}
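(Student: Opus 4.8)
The plan is to build the retraction $r$ one structure map at a time, exploiting that the target $(\ker f_1,\el_1)$ is abelian and \emph{acyclic}, so that every obstruction to extending an $L_\infty$-morphism into it vanishes automatically. First I would record the consequences of the hypothesis: by Remark~\ref{rmk:acyclic_fibs}, an acyclic fibration is both an $L_\infty$-quasi-isomorphism and an $L_\infty$-epimorphism, so $f_1\maps(L,\el_1)\to(L',\el'_1)$ is a surjective quasi-isomorphism of non-negatively graded complexes. Hence the kernel complex $(\ker f_1,\el_1)$ is acyclic; being bounded below and defined over a field, it is contractible, and I fix a degree $+1$ contracting homotopy $h\maps\ker f_1\to\ker f_1$ with $\el_1 h+h\el_1=\id$. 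Passing to suspensions, $\eta:=\bs h\ds$ is a contracting homotopy for the linear codifferential $d:=\bs\el_1\ds$ on $\bs\ker f_1$. Working in the coalgebra picture, the goal is a dg-coalgebra morphism $R\maps(\S(\bs L),\delta)\to(\S(\bs\ker f_1),d)$, that is, structure maps $R^1_m\maps\S^m(\bs L)\to\bs\ker f_1$ satisfying the specialization of Eq.~\eqref{eq:dgmap} to an abelian target, namely $d\,R^1_m=\sum_{k=1}^m R^1_k\,\delta^k_m$ for all $m\ge1$.

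For the base case I would produce a \emph{chain-level} retraction. Choosing a graded splitting $L\cong\ker f_1\oplus C$ of the short exact sequence $0\to\ker f_1\to L\xrightarrow{f_1}L'\to0$ and correcting it by $\eta$ (equivalently, using that $\ker f_1$ is contractible, so this sequence splits already in $\Chain$), I obtain a chain map $r_1\maps L\to\ker f_1$ with $r_1|_{\ker f_1}=\id$; this is exactly the $m=1$ instance $d\,R^1_1=R^1_1\delta^1_1$ with $R^1_1=\bs r_1\ds$. The normalization $r_1|_{\ker f_1}=\id$ guarantees that the induced linear map $(f_1,r_1)\maps L\to L'\oplus\ker f_1$ is an isomorphism of complexes: it is a chain map, and it is bijective because $f_1$ restricts to an isomorphism from the complement $C$ onto $L'$ while $r_1$ restricts to the identity on $\ker f_1$.

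For the inductive step, suppose $R^1_1,\dots,R^1_{m-1}$ have been constructed satisfying the morphism equations up to order $m-1$. Specializing Lemma~\ref{lem:infty-obstruct} to the abelian target (where $\delta'^{1}_k=0$ for $k\ge2$), the map $c_m:=\sum_{k=1}^{m-1}R^1_k\,\delta^k_m$ is a cycle in the complex $\bigl(\Hom(\S^m(\bs L),\bs\ker f_1),\del\bigr)$ with $\del\phi=d\circ\phi-(-1)^{\deg{\phi}}\phi\circ\delta^m_m$. Here is the crux: because the target $\bigl(\bs\ker f_1,d\bigr)$ is contractible via $\eta$, the assignment $\phi\mapsto\eta\circ\phi$ is a contracting homotopy for this Hom-complex, so it is acyclic and $[c_m]=0$. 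I may therefore set $R^1_m:=\eta\circ c_m$, which satisfies $\del R^1_m=c_m$ and hence, by part~(2) of Lemma~\ref{lem:infty-obstruct}, completes the order-$m$ equation. This explicit primitive mirrors the homotopy construction of Prop.~\ref{prop:strict_factor}, with the homotopy now acting on the (acyclic) target rather than the source.

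Assembling the $R^1_m$ yields an $L_\infty$-morphism $r\maps(L,\el)\to(\ker f_1,\el_1)$. Finally, since the forgetful functor creates products (Prop.~\ref{prop:finprod}), the pair $(f,r)$ is an $L_\infty$-morphism $(L,\el)\to(L'\oplus\ker f_1,\el'\oplus\el_1)$ whose linear part is $(f_1,r_1)$. As $(f_1,r_1)$ is an isomorphism in $\Chain$, the criterion that a morphism of $L_\infty$-algebras is an isomorphism precisely when its linear part is a chain isomorphism (Sec.~\ref{sec:L_inf}) shows that $(f,r)$ is an $L_\infty$-isomorphism, as required. I expect the only genuine subtlety to be the base case---arranging a chain retraction that is simultaneously normalized to the identity on $\ker f_1$ (so that $(f_1,r_1)$ is invertible) and a genuine chain map; the higher steps are forced and obstruction-free thanks to the acyclicity of the target.
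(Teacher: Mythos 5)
Your proof is correct, and it reaches the result by a genuinely different route than the paper. The paper's proof is a one-shot, closed-form construction: since $f_1$ is an acyclic fibration between bifibrant objects of $\Chain^{\proj}$, it admits a chain section $\sigma\maps L'\to L$ and a homotopy $h\maps L\to L$ with $\id_L-\sigma f_1=\el_1h+h\el_1$; the paper sets $r_1:=\id_L-\sigma f_1$ (an idempotent chain projection onto $\ker f_1$) and $r_k:=r_1\circ h\circ\el_k$ for $k\geq 2$, then verifies the morphism equations directly, using $\delta^2=0$ together with the identities $r_1^2=r_1$ and $r_1\el_1=\el_1 r_1$. You instead build $r$ recursively: a chain-level retraction as base case (your splitting argument for a short exact sequence with contractible kernel is sound, as is the bijectivity of $(f_1,r_1)$), followed by an obstruction-theoretic step in which the cocycle $c_m$ of Lemma~\ref{lem:infty-obstruct} is exhibited as a boundary, $c_m=\del(\eta\circ c_m)$, because $\phi\mapsto\eta\circ\phi$ contracts the whole complex $\bigl(\Hom(\S^m(\bs L),\bs\ker f_1),\del\bigr)$ when the target is contractible; your degree and sign bookkeeping here checks out. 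The two arguments place the homotopy in different places, and this is not merely cosmetic: the paper's $h$ lives on the source and contracts $\id_L-\sigma f_1$, producing non-recursive formulas for all $r_k$ simultaneously, and it is precisely these closed formulas that make the later compatibility statement in Prop.~\ref{prop:tower_decomp1} (where $r$ and $r'$ must be chosen so that a square involving a strict $L_\infty$-epimorphism commutes) verifiable by a short direct computation; your recursive $R^1_m=\eta\circ c_m$ proves the lemma equally well but would make that downstream naturality check more laborious. In exchange, your argument is shorter, reuses Lemma~\ref{lem:infty-obstruct} (which the paper's own proof of this lemma does not invoke), and isolates the conceptual reason the statement holds: every obstruction with values in a contractible abelian $L_\infty$-target vanishes, with an explicit primitive supplied by the contracting homotopy.
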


\begin{proof}
Since $f$ is an acyclic fibration, the chain map $f_1 \maps (L, \el_1) \to (L',\el'_1)$ is an acyclic fibration in $\Chain^{\proj}$. Therefore, there exists a chain map $\sigma \maps (L',\el'_1) \to (L,\el_1)$ such that $f_1 \sigma = \id_{L'}$. Moreover, since the complexes $(L, \el_1)$ and $(L',\el'_1)$ are bifibrant in $\Chain^{\proj}$, there exists a chain homotopy $h \maps L \to L$ such that $\id_L - \sigma f_1 = \el_1h + h \el_1$. We consider the following chain map:
\begin{equation} \label{eq:min_mod1.1.1}
g_1 \maps L \to \ker f_1, \quad g_1 := \id_L - \sigma f_1
\end{equation}
and for $k \geq 2$, define the degree $k-1$ multi-linear maps
\begin{equation} \label{eq:min_mod1.1.2}
g_k \maps \Lambda^k L \to \ker f_1, \quad g_k:= g_1 \circ h \circ \el_k.
\end{equation}
Let $G \maps \S(\bs L) \to \S( \bs \ker f_1)$ denote the coalgebra map associated to the maps $g_k$.
Let $\delta$ and $\delta_{\ker f_1}$ denote the codifferentials corresponding to the $L_\infty$-structures on $L$ and $\ker f_1$, respectively.
We verify that $(\delta_{\ker f_1}\circ G)^1_m$ equals  $(G \circ \delta)^1_m$ for all $m \geq 1$. Indeed, since $g_1$ is a chain map, we have $( \delta_{\ker f_1} \circ G)^1_1 = (G \circ \delta)^{1}_1$.
Now let $m \geq 2$. Equations \ref{eq:struc_skew} and \ref{eq:morph_eq1}  imply that
\begin{equation} \label{eq:min_mod1.1}
\bigl ( \delta_{\ker f_1} \circ G)^1_m = \sgn{m} \bs \bigl (\el_1  g_1h  \el_m \bigr) (\bs^{-1})^{\tensor m}
\end{equation}     
and
\begin{equation*}
\begin{split}
(G \circ \delta)^{1}_m =  G^1_1 \delta^1_m + \sum_{k \geq 2}^m  G^1_k \delta^k_m
= G^1_1 \delta^1_m + \sum_{k \geq 2}^m + \bs g_1 \circ h \bs^{-1} \delta^1_k \delta^k_m.
\end{split}
\end{equation*}
Since $\delta \circ \delta =0$, we use Eq.\ \ref{eq:codiff} to rewrite
the last term on the right hand side:  
\[
\begin{split}
(G \circ \delta)^{1}_m =  G^1_1 \delta^1_m  - \bs g_1h \bs^{-1} \delta^1_1 \delta^1_m
 = \sgn{m} \bs \bigl (g_1 - g_1 h \el_1 \bigr) \el_m (\bs^{-1})^{\tensor m}.
\end{split}
\]
By comparing the above equality with Eq.\ \ref{eq:min_mod1.1}, and using the fact
that $g_1 - g_1h\el_1 = \el_1 gh$, we conclude that $\bigl ( \delta_{\ker f_1} \circ G)^1_m =  
(G \circ \delta)^{1}_m$. Hence,  $g \maps (L, \el_k) \to (\ker f_1, \el_1)$ 
is an $L_\infty$-morphism. Finally, since the linear map $(f_1,g_1) \maps L \to L' \oplus \ker f_1$ is an isomorphism of complexes, it follows that the induced morphism
\[
\bigl(F,G \bigr) \maps L \to L' \oplus \ker f_1
\] 
is an $L_\infty$-isomorphism. 

\end{proof}

Our first decomposition result involves the commuting squares in \eqref{diag:tower} whose top edges are the strict acyclic fibrations  $q_{<m+1} \maps (\tlt{m+1} L, \tlt{m+1} \el) \to (\tleq{m} L, \tleq{m} \el)$ defined \eqref{eq:projs1}. 
Let $\ker q_{< m+1}$ denote the kernel of the strict acyclic fibration of the chain map $q_{< m+1}$.
Then $\ker q_{< m+1}$ is an abelian Lie $n$-algebra concentrated in degrees $m$ and $m+1$ with
\begin{equation*} 
(\ker q_{< m+1})_m=\im d_{m+1}, \quad  (\ker q_{< m+1})_{m+1} = \im d_{m+1}[-1]. 
\end{equation*}
The induced differential $\el^{\ker}=\el_1$ on $\ker q_{< m+1}$ is simply the desuspension isomorphism.

\begin{proposition} \label{prop:tower_decomp1}
Let $f \maps (L, \el) \to (L',\el')$ be a strict $L_\infty$-epimorphism between Lie $n$-algebras.
Then there exists morphisms in $\LnA{n}$
\[
r \maps (\tau_{< m +1}L, \tlt{m+1}\el) \to (\ker q_{<m+1},\el^{\ker}), \quad 
r' \maps (\tau_{< m +1}L', \tlt{m+1}\el') \to (\ker q'_{<m+1},\el^{\prime \ker})
\] 
inducing $L_\infty$-isomorphisms
\[
\begin{split}
\bigl(q_{< m+1},r \bigr) &\maps \bigl (\tau_{< m +1}L, \tlt{m+1}\el \bigr) \xto{\cong} \bigl( \tau_{\leq m} L \oplus \ker q_{<m+1}, \tleq{m}\el \oplus \el^{\ker} \bigr)    \\
\bigl(q'_{< m+1},r' \bigr)&\maps \bigl( \tau_{< m +1}L', \tlt{m+1}\el' \bigr) \xto{\cong} \bigl( \tau_{\leq m} L' \oplus \ker q'_{<m+1}, \tleq{m}\el' \oplus \el^{\prime \ker} \bigr)
\end{split}
\]
such that the following diagram commutes in $\LnA{n}$:
\begin{equation} \label{diag:tower_decomp1.1}
\begin{tikzpicture}[descr/.style={fill=white,inner sep=2.5pt},baseline=(current  bounding  box.center)]
\matrix (m) [matrix of math nodes, row sep=2em,column sep=5em,
  ampersand replacement=\&]
  {  
\tau_{< m +1 }L \& \tau_{\leq m }L \oplus \ker q_{<m+1}\\
\tau_{< m +1 }L' \& \tau_{\leq m }L' \oplus \ker q'_{<m+1}\\
}; 
\path[->,font=\scriptsize] 
(m-1-1) edge node[auto]{$\bigl(q_{< m+1},r \bigr)$} node[auto,below] {$\cong$} (m-1-2)
(m-1-1) edge node[auto,swap] {$\tau_{< m +1 } f$} (m-2-1)
(m-1-2) edge node[auto] {$\tau_{\leq m} f \oplus \tau_{< m +1 }f \vert_{\ker}$} (m-2-2)
(m-2-1) edge node[auto]{$\bigl(q'_{< m+1},r' \bigr)$} node[below] {$\cong$} (m-2-2)
;
\end{tikzpicture}
\end{equation}
\end{proposition}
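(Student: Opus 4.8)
The plan is to reduce the whole proposition to a \emph{coordinated} application of Lemma \ref{lem:min_mod1}. The key observation is that the top and bottom horizontal maps of the square \eqref{diag:tower_decomp1.1} are built from strict acyclic fibrations: $q_{<m+1} \maps \tlt{m+1}L \to \tleq{m}L$ and $q'_{<m+1} \maps \tlt{m+1}L' \to \tleq{m}L'$ are each surjective in every degree and are quasi-isomorphisms (as recorded after \eqref{eq:projs1}). Applying Lemma \ref{lem:min_mod1} to $q_{<m+1}$ (resp.\ $q'_{<m+1}$) already yields the morphisms $r$ (resp.\ $r'$) and exhibits $(q_{<m+1},r)$ and $(q'_{<m+1},r')$ as the asserted $L_\infty$-isomorphisms. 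Thus the entire content of the proposition is that these two invocations of Lemma \ref{lem:min_mod1} can be made with compatible choices, so that \eqref{diag:tower_decomp1.1} commutes. Writing $\phi := \tlt{m+1}f$ and $\psi := \tleq{m}f$, both are strict, and $\psi\, q_{<m+1} = q'_{<m+1}\,\phi$ by Prop.\ \ref{prop:lna_tower}(3); since $\phi$ is strict it carries $\ker q_{<m+1}$ into $\ker q'_{<m+1}$, so the right-hand vertical map $\tleq{m}f \oplus \phi|_{\ker}$ is well defined, and commutativity of \eqref{diag:tower_decomp1.1} amounts to the single identity $\phi\, r = r'\,\phi$ (the other component is Prop.\ \ref{prop:lna_tower}(3) again).

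The plan is to re-run the construction in the proof of Lemma \ref{lem:min_mod1} for $q_{<m+1}$ and $q'_{<m+1}$ \emph{simultaneously}, selecting the linear sections and contracting homotopies so that they intertwine with $f$. Concretely, I would produce sections $\sigma \maps \tleq{m}L \to \tlt{m+1}L$, $\sigma' \maps \tleq{m}L' \to \tlt{m+1}L'$ and contracting homotopies $h,h'$ with
\[
\phi\,\sigma = \sigma'\,\psi, \qquad \phi\, h = h'\,\phi.
\]
The complexes $\tlt{m+1}L$ and $\tleq{m}L$ agree outside degrees $m$ and $m+1$, where $q_{<m+1}$ is the identity below degree $m$ and zero in degree $m+1$; hence the only genuine choice is a linear splitting $L_m = \im d_{m+1}\oplus C_m$ with $C_m$ mapping isomorphically onto $\coker d_{m+1}=(\tleq{m}L)_m$, and similarly $L'_m = \im d'_{m+1}\oplus C'_m$. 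Given such splittings with $f_1(C_m)\subseteq C'_m$, the sections are forced, and the homotopies are simply the induced projections onto $\im d_{m+1}$ (resp.\ $\im d'_{m+1}$) placed in degree $m\to m+1$; a routine degree-by-degree check shows these satisfy the contracting-homotopy relations demanded by Lemma \ref{lem:min_mod1} and both intertwining identities above.

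The main obstacle is the existence of \emph{compatible} complements $C_m, C'_m$, and this is exactly where the hypothesis that $f$ is a strict $L_\infty$-\emph{epimorphism} (Def.\ \ref{def:quasi-iso}), i.e.\ $f_1$ surjective in \emph{all} degrees, is indispensable. Surjectivity of $f_1$ in degree $m+1$ forces $f_1(\im d_{m+1})=\im d'_{m+1}$, while surjectivity in degree $m$ makes $\psi=\bar f\maps \coker d_{m+1}\to\coker d'_{m+1}$ surjective and shows that the degree-$m$ part of $\ker f_1$ surjects under $L_m\to\coker d_{m+1}$ onto $\ker\bar f$. A short linear-algebra argument then builds $C'_m$ as any complement of $\im d'_{m+1}$, lifts it through $f_1$ to a subspace $\tilde C$ with $f_1|_{\tilde C}$ injective and $\tilde C\cap\im d_{m+1}=0$, and completes it by a subspace $C''\subseteq\ker f_1$ mapping isomorphically onto $\ker\bar f$, so that $C_m=\tilde C\oplus C''$ is a complement of $\im d_{m+1}$ with $f_1(C_m)\subseteq C'_m$. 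With this coordinated data, the structure maps produced by Lemma \ref{lem:min_mod1} are $r_1=\id-\sigma\, q_{<m+1}$ \eqref{eq:min_mod1.1.1} and $r_k=r_1\circ h\circ(\tlt{m+1}\el)_k$ for $k\geq 2$ \eqref{eq:min_mod1.1.2}, and likewise for $r'$. Using strictness of $\phi$ together with $\phi\sigma=\sigma'\psi$ and $\phi h=h'\phi$, one verifies $r'_k\circ(\phi_1)^{\tensor k}=\phi|_{\ker}\circ r_k$ for every $k$, that is $r'\,\phi=\phi|_{\ker}\,r$, which is precisely the remaining commutativity needed for \eqref{diag:tower_decomp1.1} and completes the proof.
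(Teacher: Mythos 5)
Your proposal is correct and follows essentially the same route as the paper's proof: both reduce the statement to running the construction of Lemma \ref{lem:min_mod1} simultaneously for the acyclic fibrations $q_{<m+1}$ and $q'_{<m+1}$, choosing the sections and contracting homotopies so that they intertwine with $f$ (your compatible complements $C_m$, $C'_m$ with $f_1(C_m)\subseteq C'_m$ are exactly the paper's compatible sections $s$, $s'$, obtained there by a diagram chase on the degree-$m$ short exact sequences using the $L_\infty$-epimorphism hypothesis). The concluding identity $r'_k\circ\bigl(\tlt{m+1}f_1\bigr)^{\tensor k}=\tlt{m+1}f_1\vert_{\ker}\circ r_k$ is verified by the same direct computation in both arguments.
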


\begin{proof}
Since $f$ is strict, we have $f=f_1$ and so
Prop.\ \ref{prop:lna_tower} implies that we have the following commutative diagram in $\LnA{n}$:
\begin{equation} \label{diag:tower_decomp1.2}
\begin{tikzpicture}[descr/.style={fill=white,inner sep=2.5pt},baseline=(current  bounding  box.center)]
\matrix (m) [matrix of math nodes, row sep=2em,column sep=2em,
  ampersand replacement=\&]
  {  
\tau_{< m +1 }L \& \tau_{\leq m }L\\
\tau_{< m +1 }L' \& \tau_{\leq m }L'\\
}; 
\path[->,font=\scriptsize] 
(m-1-1) edge node[auto] {$q_{< m+1}$} (m-1-2)
(m-1-1) edge node[auto,swap] {$\tlt{m+1}f_1$} (m-2-1)
(m-1-2) edge node[auto] {$\tleq{m}f_1$} (m-2-2)
(m-2-1) edge node[auto] {$q'_{< m+1}$} (m-2-2)
;
\end{tikzpicture}
\end{equation}
For the sake of brevity, let $V$ and $V'$ denote the abelian Lie $n$-algebras $\ker q_{<m+1}$ and $\ker q'_{<m+1}$, respectively.

Since $q_{< m+1}$ and $q'_{< m+1}$ are acyclic fibrations, Lemma \ref{lem:min_mod1} provides us 
with $L_\infty$-isomorphisms\\ $(q_{< m+1},r) \maps \tau_{< m +1 }L \xto{\cong} \tau_{\leq m }L \oplus V$ and
$(q'_{< m+1},r') \maps \tau_{< m +1 }L' \xto{\cong} \tau_{\leq m }L' \oplus V'$. 
We will show that in the proof of Lemma \ref{lem:min_mod1}, we can choose the morphisms $r$ and $r'$ such that diagram \eqref{diag:tower_decomp1.1} commutes. 

In degree $m$, \eqref{diag:tower_decomp1.2} corresponds to the following commutative diagram between short exact sequences of vector spaces:
\[
\begin{tikzpicture}[descr/.style={fill=white,inner sep=2.5pt},baseline=(current  bounding  box.center)]
\matrix (m) [matrix of math nodes, row sep=2em,column sep=2em,
  ampersand replacement=\&]
  {  
\im d_{m+1} \& L_{m} \& \coker d_{m+1} \\
\im d'_{m+1} \& L'_{m} \& \coker d'_{m+1} \\
}; 
\path[->,font=\scriptsize] 
 (m-1-1) edge node[auto] {$i$} (m-1-2)
 (m-2-1) edge node[auto] {$i'$} (m-2-2)
;
\path[->,font=\scriptsize] 
 (m-1-2) edge node[auto] {$\pi$} (m-1-3)
 (m-2-2) edge node[auto] {$\pi'$} (m-2-3)
 ;
\path[->,font=\scriptsize] 
 (m-1-1) edge node[auto,swap] {$\tlt{m+1}f_1 \vert_{ \im d}$} (m-2-1)
 (m-1-2) edge node[auto] {$\tlt{m+1}f_1$} (m-2-2)
 (m-1-3) edge node[auto] {$\tleq{m}f_1$} (m-2-3)
 ;
\end{tikzpicture}
\]
Since $f$ is an $L_\infty$-epimorphism, the maps $\tlt{m+1}f_1 \vert_{\im d}$ and 
$\tleq{m}f_1$ are surjections. This, along with the fact that the rows are exact, implies that there exists sections $s \maps \coker d_{m+1} \to L_m$, and $s' \maps \coker d'_{m+1} \to L'_m$, of $\pi$ and $\pi'$, respectively, such that
\[
\tlt{m+1}f_1 \circ s =s' \circ \tleq{m}f_1.
\] 
The linear maps $s$ and $s'$ induce sections $\sigma \maps \tleq{m}L \to 
\tlt{m+1}L$ and $\sigma' \maps \tleq{m}L' \to \tlt{m+1}L'$ in $\Chain$ of the 
chain maps $q_{< m+1}$ and $q'_{m+1}$, respectively. Explicitly, we have
\[
\sigma(x):=
\begin{cases}
x, & \text{if $\deg{x} < m$} \\
0, & \text{if $\deg{x} > m $} \\
s(x), & \text{if $\deg{x} =m $} \\
\end{cases}
\]
with an analogous formula for $\sigma'$. Moreover, it follows that 
\begin{equation} \label{eq:tower_decomp1.1}
\tlt{m+1}f_1 \circ \sigma = \sigma' \circ \tleq{m}f_1.
\end{equation}
We then construct chain homotopies $h \maps \tlt{m+1} L \to \tlt{m+1} L[1]$ and $h' \maps \tlt{m+1} L' \to \tlt{m+1} L'[1]$, as in the proof of Lemma \ref{lem:min_mod1}. 
Explicitly, we have
\[
h(x):=
\begin{cases}
0, & \text{if $\deg{x} < m$ or $\deg{x} > m$} \\
\bs (x- s \pi (x)) \in \im(d_{m+1})[1] , & \text{if $\deg{x} =m $} \\
\end{cases}
\]
with an analogous formula for $h'$. Hence, the homotopies satisfy
\begin{equation} \label{eq:tower_decomp1.2}
\tlt{m+1}f_1 \circ h = h' \circ \tlt{m+1}f_1. 
\end{equation}
We then use the chain maps $\sigma$, $\sigma'$, the homotopies $h$ and $h'$, and
the $L_\infty$-structures $\tlt{m+1} \el$ and $\tlt{m+1} \el'$
to construct $L_\infty$-morphisms
\[
r \maps \tau_{< m +1}L \to V, \quad r' \maps \tau_{< m +1}L' \to V'
\] 
via formulas \eqref{eq:min_mod1.1.1} and \eqref{eq:min_mod1.1.2} in the proof of Lemma \ref{lem:min_mod1}.

Using \eqref{eq:tower_decomp1.1}, \eqref{eq:tower_decomp1.2}, and the fact that
$\tlt{m+1} f_1$ is an $L_\infty$-morphism, a direct computation verifies that 
for all $k \geq 1$, we have 
\[
r'_k \circ \bigl (\tlt{m+1}f_1 \bigr)^{\tensor k} = 
\tlt{m+1}f_1 \vert_{V} \circ r_k.
\]
Hence, the diagram \eqref{diag:tower_decomp1.1} commutes. 
\end{proof}

Now let $m\geq 1$. We focus on those commuting squares in \eqref{diag:tower} whose top edges are the strict quasi-split fibrations  $q_{\leq m} \maps (\tleq{m} L, \tleq{m} \el) \to (\tlt{m} L, \tlt{m} \el)$ defined \eqref{eq:projs1}. 
As a chain map, $q_{\leq m}$ induces a short exact sequence of chain complexes
\[
H_m \xto{i} \tleq{m}L \xto{q_{\leq m}}  \tlt{m}L 
\]
where $H_n$ is the homology group $H_n(L)$ concentrated in degree $n$ with trivial differential.
Our second decomposition result is the following:

\begin{proposition} \label{prop:tower_decomp2}
Let $m \geq 1$ and let $f \maps (L, \el) \to (L',\el')$ be a strict $L_\infty$-epimorphism in $\LnA{n}$ such that the induced map in homology
\[
H(f_1) \maps H_{m} \to H'_m
\] 
is surjective in degree $m$. Then there exists $L_\infty$-structures $\hat{\el}$ and $\hat{\el}'$ on the graded vector spaces $\tlt{m} L \oplus H_m$ and $\tlt{m} L' \oplus H'_m$, respectively,  and $L_\infty$-isomorphisms
\[
\begin{split}
\hat{q} \maps (\tleq{m} L, \tleq{m}\el) &\xto{\cong} \bigl( \tlt{m}L \oplus H_m, \hat{\el} ~ \bigr)\\
\hat{q}' \maps (\tleq{m} L', \tleq{m}\el') & \xto{\cong} \bigl( \tlt{m}L' \oplus H'_m, \hat{\el'} ~ \bigr)
\end{split}
\]
such that the following diagram of strict $L_\infty$-morphisms commutes
\begin{equation}
\label{diag:tower_decomp2.1}
\begin{tikzpicture}[descr/.style={fill=white,inner sep=2.5pt},baseline=(current  bounding  box.center)]
\matrix (m) [matrix of math nodes, row sep=2em,column sep=2em,
  ampersand replacement=\&]
  {  
( \tau_{\leq m }L, \tleq{m} \el) \& (\tau_{\lt m }L \oplus H_m, \hat{\el})\\
(\tau_{\leq m }L', \tleq{m} \el') \& (\tau_{\lt m }L' \oplus H_m', \hat{\el}^{\prime})\\
}; 
\path[->,font=\scriptsize] 
(m-1-1) edge node[auto]{$\hat{q}$} node[auto,below] {$\cong$} (m-1-2)
(m-1-1) edge node[auto,swap] {$\tleq{m} f$} (m-2-1)
(m-1-2) edge node[auto] {$\tlt{m} f \oplus H(f)$} (m-2-2)
(m-2-1) edge node[auto]{$\hat{q}'$} node[below] {$\cong$} (m-2-2)
;
\end{tikzpicture}
\end{equation}
\end{proposition}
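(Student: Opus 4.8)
The plan is to follow the same template as Prop.\ \ref{prop:tower_decomp1}, with one essential difference: the top edge $q_{\leq m}$ is a \emph{quasi-split} (not acyclic) fibration whose kernel is the abelian Lie $n$-algebra $H_m$, which is \emph{not} contractible, so Lemma \ref{lem:min_mod1} no longer applies. As a consequence the decomposition of $\tleq{m}L$ is in general a nontrivial extension (carrying the Postnikov/$k$-invariant) rather than a product, and I would accordingly produce $\hat{\el},\hat{\el}'$ as \emph{transported} $L_\infty$-structures and take $\hat q,\hat q'$ to be \emph{strict} isomorphisms. First, since $f$ is strict, Prop.\ \ref{prop:lna_tower} furnishes the commutative square of strict $L_\infty$-morphisms relating $q_{\leq m},q'_{\leq m}$ to $\tleq{m}f,\tlt{m}f$ (the $q_{\leq m}$-analogue of \eqref{diag:tower_decomp1.2}). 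Reading this square in degree $m$ yields a morphism of short exact sequences of vector spaces, with rows $H_m \hookrightarrow \coker d_{m+1} \xrightarrow{d_m} \im d_m$ and $H'_m \hookrightarrow \coker d'_{m+1} \xrightarrow{d'_m} \im d'_m$, whose vertical maps are $H(f_1)$, $\bar f:=(\tleq{m}f_1)_m$ and $f':=(\tlt{m}f_1)_m$. Because $f$ is an $L_\infty$-epimorphism, $\bar f$ and $f'$ are surjective, and $H(f_1)$ is surjective in degree $m$ by hypothesis.

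The main step (and the one real obstacle) will be to choose \emph{compatible} linear splittings. Concretely, I would first pick a section $s'\maps \im d'_m \to \coker d'_{m+1}$ of $d'_m$, giving a complement $C':=\im s'$ of $H'_m$, and then seek a complement $C$ of $H_m$ in $\coker d_{m+1}$ with $\bar f(C)\sse C'$; the corresponding section $s$ of $d_m$ then satisfies $\bar f s = s' f'$. The point is that such a $C$ exists: since $\bar f$ is onto and $H(f_1)$ is onto in degree $m$, one checks $H_m+\bar f^{-1}(C')=\coker d_{m+1}$, while $H_m\cap \bar f^{-1}(C')=\ker H(f_1)$; choosing $C$ to be a complement of $\ker H(f_1)$ inside $\bar f^{-1}(C')$ gives $C\oplus H_m=\coker d_{m+1}$ together with $\bar f(C)\sse C'$. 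This is exactly where the surjectivity of $H(f_1)$ in degree $m$ is indispensable.

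With $s,s'$ fixed, set $g_1\maps \tleq{m}L\to H_m$ to be $0$ in degrees below $m$ and $\id-s\,d_m$ in degree $m$ (the projection onto $H_m=\ker d_m\sse \coker d_{m+1}$), and define $g'_1$ analogously. Then $g_1$ is a chain map (trivially, as $H_m$ sits in a single degree) and $\hat q_1:=(q_{\leq m},g_1)\maps \tleq{m}L \to \tlt{m}L\oplus H_m$ is a chain isomorphism, as is $\hat q'_1:=(q'_{\leq m},g'_1)$. I would then \emph{transport} the $L_\infty$-structures: put $\hat{\el}:=(\hat q_1)_\ast\,\tleq{m}\el$ and $\hat{\el}':=(\hat q'_1)_\ast\,\tleq{m}\el'$, so that by construction $\hat q$ and $\hat q'$ are strict $L_\infty$-isomorphisms onto $(\tlt{m}L\oplus H_m,\hat{\el})$ and $(\tlt{m}L'\oplus H'_m,\hat{\el}')$.

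Finally I would verify \eqref{diag:tower_decomp2.1}. Since $\hat q,\hat q'$ and $\tleq{m}f$ are all strict, the composite $V:=\hat q'\circ(\tleq{m}f)\circ\hat q^{-1}$ is a strict $L_\infty$-isomorphism, hence determined by its linear part $V_1=\hat q'_1\,(\tleq{m}f_1)\,\hat q_1^{-1}$; it therefore suffices to show $V_1=\tlt{m}f_1\oplus H(f_1)$. The first component reduces to $q'_{\leq m}\,\tleq{m}f_1=\tlt{m}f_1\,q_{\leq m}$, which is the commutativity already recorded from Prop.\ \ref{prop:lna_tower}. The second component amounts to $g'_1\circ(\tleq{m}f_1)=H(f_1)\circ g_1$, which I would obtain in degree $m$ from the chain of equalities $g'_1\bar f=(\id-s'd'_m)\bar f=\bar f-s'f'd_m=\bar f-\bar f\,s\,d_m=\bar f\,g_1=H(f_1)\,g_1$, using $d'_m\bar f=f'd_m$, the compatibility $\bar f s=s'f'$ from the previous step, and $\bar f|_{H_m}=H(f_1)$ (below degree $m$ both sides vanish). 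Thus $V$ equals the strict morphism $\tlt{m}f\oplus H(f)$, the square commutes, and this is precisely the assertion that $\tlt{m}f\oplus H(f)$ is an $L_\infty$-morphism for $\hat{\el},\hat{\el}'$. Since the construction of $s'$ and then $s$ can be carried out once for the pair $(L,L')$, the entire decomposition is compatible with $f$ as claimed.
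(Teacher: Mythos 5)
Your proposal is correct and follows essentially the same route as the paper's proof: reduce to the degree-$m$ morphism of short exact sequences $H_m \to \coker d_{m+1} \to \im d_m$, choose sections $s,s'$ of $d_m,d'_m$ compatible with $f$, form the chain isomorphism $\hat{q}=(q_{\leq m}, \id - s d_m)$ (the paper's $\hat{r}=\id - t q_{\leq m}$ is exactly your $g_1$), transport the $L_\infty$-structures across $\hat{q},\hat{q}'$, and verify commutativity of \eqref{diag:tower_decomp2.1} on linear parts of strict morphisms. The only differences are cosmetic: the paper builds the compatible sections by the explicit correction formula $s:=\psi - i\mu\bigl(\tleq{m}f_1\psi - s'\tlt{m}f_1\bigr)$ from sections $\mu,\nu,\psi$ of $H(f_1)$, $\tlt{m}f_1$, $d_m$, whereas you choose a complement of $\ker H(f_1)$ inside $\bar{f}^{-1}(C')$ (both hinge on surjectivity of $H(f_1)$ in degree $m$), and your composite $V=\hat{q}'\circ\tleq{m}f\circ\hat{q}^{-1}$ should be called a strict $L_\infty$-\emph{morphism} rather than isomorphism, which is all your (otherwise valid) argument uses.
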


\begin{proof}
Since $f$ is strict, we have $f=f_1$ and so Prop.\ \ref{prop:lna_tower} implies that we have the following commutative diagram in $\LnA{n}$:
\begin{equation*} 
\begin{tikzpicture}[descr/.style={fill=white,inner sep=2.5pt},baseline=(current  bounding  box.center)]
\matrix (m) [matrix of math nodes, row sep=2em,column sep=2em,
  ampersand replacement=\&]
  {  
\tau_{\leq m }L \& \tau_{\lt m }L\\
\tau_{\leq m }L' \& \tau_{\lt m }L'\\
}; 
\path[->,font=\scriptsize] 
(m-1-1) edge node[auto] {$q_{\leq m}$} (m-1-2)
(m-1-1) edge node[auto,swap] {$\tleq{m}f_1$} (m-2-1)
(m-1-2) edge node[auto] {$\tlt{m}f_1$} (m-2-2)
(m-2-1) edge node[auto] {$q'_{\leq m}$} (m-2-2)
;
\end{tikzpicture}
\end{equation*}
In degree $m$, this corresponds to the following commutative diagram between short exact sequences of vector spaces:
\[
\begin{tikzpicture}[descr/.style={fill=white,inner sep=2.5pt},baseline=(current  bounding  box.center)]
\matrix (m) [matrix of math nodes, row sep=2em,column sep=2em,
  ampersand replacement=\&]
  {  
H_m \& \coker d_{m+1} \& \im d_m[-1] \\
H'_m \& \coker d'_{m+1} \& \im d'_m[-1] \\
}; 
\path[->,font=\scriptsize] 
 (m-1-1) edge node[auto] {$i$} (m-1-2)
 (m-2-1) edge node[auto] {$i'$} (m-2-2)
;
\path[->,font=\scriptsize] 
 (m-1-2) edge node[auto] {$d_m$} (m-1-3)
 (m-2-2) edge node[auto] {$d'_m$} (m-2-3)
 ;
\path[->,font=\scriptsize] 
 (m-1-1) edge node[auto,swap] {$H(f_1)$} (m-2-1)
 (m-1-2) edge node[auto] {$\tleq{m}f_1$} (m-2-2)
 (m-1-3) edge node[auto] {$\tlt{m}f_1$} (m-2-3)
 ;
\end{tikzpicture}
\]
By hypothesis, the vertical maps are surjections. Let 
\[
\mu \maps H'_m \to H_m, \quad \nu \maps \im d'_{m}[-1] \to \im d_{m}[-1], \quad 
\psi \maps \im d_{m}[-1] \to \coker d_{m+1},
\]
be sections of $H(f_1)$, $\tlt{m} f_1$, and $d_m$, respectively.
Denote by $s' \maps \im d'_m[-1] \to \coker d'_{m+1}$ the composition $s':= \tleq{m} f_1 \circ \psi \circ \nu$. In general, the linear map $\tleq{m}f_1 \psi - s' \tlt{m}f_1$ will not equal zero.
So let $s \maps \im d_m[-1] \to \coker d_{m+1}$ be the composition $s:=\psi - i \circ \mu \circ (\tleq{m}f_1 \psi - s' \tlt{m}f_1)$. Then $s$ and $s'$ are sections of $d_m$ and $d'_m$, respectively, and 
\[
\tleq{m }f_1 \circ s =s' \circ \tlt{m}f_1.
\] 
We use $s$ and $s'$ to define chain maps. Let $t \maps \tau_{< m}L \to \tau_{\leq m}L$ and $\hat{r} \maps \tleq{m} L \to H_m$ be the linear maps
\[
t(x):=
\begin{cases}
s(x), & \text{if $\deg{x} =m $} \\
x, & \text{if $\deg{x} < m$,} \\
\end{cases} \qquad \hat{r}:= \id -tq_{\leq m}
\] 
respectively. Then the isomorphism $\hat{q} \maps \tleq{m}L \to \tlt{m}L \oplus H_m$ 
is defined to be
\[
\hat{q}(z):=\bigl (q_{\leq m}(z), \hat{r}(z) \bigr).
\]
The map $\hat{q}'$ is defined in the analogous way, using the 
section $s'$ instead of $s$. 
Hence \eqref{diag:tower_decomp2.1} commutes as a diagram in the category $\Chain$.

Finally, we construct compatible $L_\infty$-structures on $\tlt{m}L \oplus H_m$ and $\tlt{m}L' \oplus H'_m$ via transfer across the isomorphisms $\hat{q}$ and $\hat{q}'$.
For each $k \geq 1$, we define
\begin{equation} \label{eq:tower_decomp2.2}
\hat{\el}_k := \hat{q} \circ \tleq{m} \el_k \circ \bigl( \hat{q}^{-1}\bigr)^{\tensor k},
\quad \hat{\el}'_k := \hat{q}' \circ \tleq{m} \el'_k \circ \bigl( \hat{q}^{\prime -1} \bigr)^{\tensor k}.
\end{equation}
Hence, by construction, the chain maps $\hat{q}$ and $\hat{q}'$
lift to $L_\infty$-isomorphisms.
\end{proof}

\subsubsection{The structure of the ``twisted product'' $(\tlt{m} L \oplus H_m, \hat{\el})$} \label{sec:tower_decomp2}

We emphasize that, in general, the Lie $n$-algebra $\bigl (\tlt{m}L \oplus H_m, \hat{\el} \bigr)$ 
defined in the above proof of Cor.\ \ref{prop:tower_decomp2} is not the categorical product of the Lie $n$-algebra $(\tlt{m}L, \tlt{m}\el)$ with the abelian Lie $n$-algebra $H_m$. This is in contrast with the decomposition $\tau_{< m +1}L \cong \tau_{\leq m} L \oplus \ker q_{<m+1}$ given in Prop.\ \ref{prop:tower_decomp1}. 

Indeed, it follows from the definition of $\hat{q}$ given in the above proof that  
the $L_\infty$-structure maps $\hat{\el}_k \maps \Lambda^k (\tlt{m}L \oplus H_m) \to \tlt{m}L \oplus H_m$ 
defined in Eq.\ \ref{eq:tower_decomp2.2} can be written as
\begin{multline*} 
\hat{\el}_k\bigl( (x_1,y_1), (x_2,y_2), \ldots, (x_k,y_k) \bigr) =  \\ 
 \Bigl( \tlt{m} \el_k \bigl (x_1,x_2,\ldots,x_k), ~ \hat{r}\circ \tleq{m} \el_k \bigl (t x_1 + y_1, t x_2 +y_2,\ldots, t x_k+ y_k) \Bigr).
\end{multline*}    
Note that there is only one non-trivial structure map 
$\hat{\el}_k$ that involves non-zero inputs from $H_m$, since 
$H_m$ is concentrated in top degree $m$. Namely:
\[
\hat{\el}_2\bigl( (x,0), (0,y) \bigr) = \bigl(0, \el_2(x,y) \bigr),
\]
where $x \in L_0=\tlt{m}L_0$ is an element of degree 0 and $y \in H_m$. 
This simple observation plays a key role in our study \cite{Rogers-Zhu:2018} of integrated quasi-split fibrations.

\end{document}